\definecolor{midgrey}{RGB}{150,173,180}
\newcommand*\bigcdot{\mathpalette\bigcdot@{.5}}
\newcommand*\bigcdot@[2]{\mathbin{\vcenter{\hbox{\scalebox{#2}{$\m@th#1\bullet$}}}}}
\newcommand\blfootnote[1]{
  \begingroup
  \renewcommand\thefootnote{}\footnote{#1}
  \addtocounter{footnote}{-1}
  \endgroup
}
\definecolor{mydarkorange}{RGB}{0,0,0}
\definecolor{gold}{rgb}{0,0,0}
\definecolor{grey}{RGB}{0,0,0}
\definecolor{myorange}{RGB}{0,0,0}
\definecolor{mydarkorange}{RGB}{0,0,0}
\definecolor{mylightblue}{RGB}{0,0,0}
\definecolor{myyellow}{RGB}{0,0,0}
\definecolor{purple}{RGB}{0,0,0}
\definecolor{brown}{RGB}{0,0,0}
\definecolor{myblue}{RGB}{0,0,0}
\definecolor{mygreen}{RGB}{0,0,0}
\newtheorem{theorem}{Theorem}[section]
\newtheorem{lemma}[theorem]{Lemma}
\theoremstyle{definition}
\newtheorem{definition}[theorem]{Definition}
\newtheorem{example}[theorem]{Example}
\theoremstyle{remark}
\newtheorem{remark}[theorem]{Remark}
\numberwithin{equation}{section}
\newcommand*\cleartoleftpage{%
  \clearpage
  \ifodd\value{page}\hbox{}\newpage\fi
}
\newcommand{\te}{}
\newcommand{\teALONE}{{\cdot}}
\newcommand{\g}                 [2] {{#1}\te{#2}}
\newcommand{\teu}           {\mathbin{\teALONE_u}} 
\newcommand{\claim}{}
\newcommand{\tev}           {\mathbin{\teALONE_v}}
\newcommand{\komp}      {{^\prime}}
\newcommand{\kompM}[1]{^{{\prime^{\mkern-4mu^{_{#1}}}}}}
\newcommand{\nega}      [1] {{#1}\komp}
\newcommand{\negaM}      [2] {{#2}\kompM{#1}}
\newcommand{\ite}[1]{\mathbin{\rightarrow_{#1}}}
\newcommand{\gstar}               [2] {{#1} \mathbin{\star} {#2}}
\newcommand{\gteu}         [2] {{#1} \mathbin{\teALONE_u} {#2}}
\newcommand{\gteM}       [3] {{#2} \mathbin{\teALONE_{#1}} {#3}}
\newcommand{\gtev}         [2] {{#1} \mathbin{\tev} {#2}}
\newcommand{\res}               [3] {{#2}\mathbin{\ite{#1}}{#3}}
\begin{document}

\title[Group representation for a class of residuated chains]{Group representation for even and odd involutive commutative residuated chains}
\thanks{The present scientific contribution was supported by the GINOP 2.3.2-15-2016-00022 grant
and the Higher Education Institutional Excellence Programme 20765-3/2018/FEKUTSTRAT of the Ministry of Human Capacities in Hungary.}


\begin{abstract}
For odd and for even involutive, commutative residuated chains a representation theorem is presented in this paper by means of direct systems of abelian $o$-groups equipped with further structure. 
This generalizes the corresponding result of J. M. Dunn about finite Sugihara monoids.

\end{abstract}

\author{S\'andor Jenei}
\address{Institute of Mathematics and Informatics, University of P\'ecs, Ifj\'us\'ag u. 6, H-7624 P\'ecs, Hungary, jenei@ttk.pte.hu}
\keywords{Involutive residuated lattices, construction, representation, abelian $o$-groups}
\subjclass[2010]{Primary 97H50, 20M30; Secondary 06F05, 06F20, 03B47.}

\date{}

\maketitle

\section{Introduction}

J. M. Dunn has described the structure of finite Sugihara monoids by showing in \cite{AB} that subdirectly irreducible Sugihara monoids are linearly ordered and by describing the structure of finite Sugihara chains in \cite{Dunn}.
We generalize Dunn’s results by dropping the idempotence and the finiteness axioms. 

In line with the tradition of representing classes of residuated lattices by simpler and better known structures such as groups or Boolean algebras, Mundici's celebrated categorical equivalence theorem represents MV-algebras -- a variety which corresponds to \L ukasiewicz logic $\mathbb L$ \cite{COM} -- by $\ell$-groups with strong units using a truncation construction \cite{Gamma}. 
By dropping the divisibility axiom of MV-algebras one obtains the variety of IMTL-algebras 
which correspond to the logic $\mathbb{IMTL}$ \cite{FSystems,Perfect,IMTLgamma,PriestleyIMTLis,IMTLstates,stIMTLeredeti,PerfectAgain,CCkripke,StIMTL,IMTLCIG,IMT3}.
The non-integral analogue of the class of IMTL-algebras shall be represented by $o$-groups in this paper.
By replacing the integrality axiom of IMTL-algebras
by one of its two natural non-integral analogues one obtains the class of odd and the class of even involutive semilinear FL$_e$-algebras, the former of which is also known as the class of IUL$^{fp}$-algebras corresponding to the 
Involutive Uninorm Logics with fixed point ($\mathbb{IUL}^{fp}$) introduced by G. Metcalfe in \cite{GeorgePhD}, see also \cite{MetMontSubstr}.
We shall represent all subdirectly irreducible members of these two classes by means of abelian $o$-groups. Subdirectly irreducible even or odd involutive 
 FL$_e$-algebras are linearly ordered.
Little is known about the structure of IMTL-algebras and an effective structural description for this class seems to be out of reach at present, arguably, such a structural description is impossible.
However, as shown in this paper, the non-integral counterpart of the class of IMTL-algebras possesses a representation by groups, similar to MV-algebras or to cancellative commutative residuated lattices \cite{MoTs}.

Whether Involutive Uninorm Logic ($\mathbb{IUL}$) is standard complete or not is a problem posed by G.~Metcalfe and F.~Montagna \cite{MetMontSubstr} and has become a long-standing open problem which has invoked considerable effort to solve it with no avail\footnote{Recently it is claimed to be proved in \cite{IUL?} by using proof-theoretic methods. However, the scientific community has doubts about the correctness of the proof, see the remark of the author himself in \cite[second section in page~43]{IUL?}.}. 
The representation theorem in this paper may serve as a step toward settling the problem of standard completeness of $\mathbb{IUL}$ in an algebraic manner. 

Group theory is the science of symmetries whereas the theory of inverse semigroups is the science of partial symmetries \cite{InverseSemigroups}.
Our main result is reminiscent to the representation theorem of a subclass of inverse semigroups, called Clifford semigroups. 
Every Clifford semigroup is isomorphic to a strong semilattice of groups \cite[Theorem 12 in Section 5.2]{InverseSemigroups} (see also \cite{Saito}), and the starting point of the strong semilattice construction is a direct system of groups indexed by a semilattice.
In our case the starting point is a direct system of abelian $o$-groups indexed by a chain and equipped with some extra structure. The definition of the {\em product} in the strong semilattice construction is of the same P\l{}onka sum fashion (see \cite{plonka}) as in our construction in (\ref{EgySzeruTe}). However, in our case we need to modify the abelian $o$-groups prior to applying (\ref{EgySzeruTe}), and we also need to handle the ordering, the residual operation, the isotonicity of the product with respect to the ordering etc.
In particular, we introduce in this paper an ordering on P\l{}onka sums, called the directed lexicographic order.

P\l{}onka sums have found applications not only in computer science, in particular in the theory of program semantics \cite{PlonkaProgram}, but
recently its corresponding regular varieties have been unexpectedly connected to logic.
They provide algebraic semantics to logics obtained by imposing a deductive filter to other logics; for instance, Paraconsistent Weak Kleene logic coincides with the regularization of the variety of Boolean algebras \cite{BonzioPlonka,BMB}.

Integral residuated lattices has been widely studied in the literature \cite{AglMon,BCGJT,BlokRaftery,COM,GenMV,hajekBOOK,JKpsBCK}.
However, as noted by N. Galatos and J. G. Raftery in \cite{GalRaf}, 
non-integral residuated structures and consequently, substructural logics without the weakening rule are less understood than their integral counterparts. To overcome this,
they established category equivalences to carry over the algebraic knowledge on integral structures to non-integral ones \cite{GalRaf,Gal2015}. These kind of categorical approaches 
use some algebraic background knowledge which is available if idempotency is postulated. Our paper makes a step in the direction of providing an algebraic insight into the non-idempotent, non-integral, non-divisible setting.

Prominent examples of odd involutive FL$_e$-algebras are abelian $\ell$-groups \cite{AF} and odd Sugihara monoids.
These two classes of algebras represent two extremities: there is a single idempotent element in any lattice-ordered abelian group, whereas all elements are idempotent in any odd Sugihara monoid. 
The former class constitutes an algebraic semantics of Abelian Logic \cite{casari,MeyerAbelian,AbelianNow} while the latter constitutes an algebraic semantics of $\mathbb{IUML}^*$, which is a logic at the intersection of relevance logic and many-valued logic \cite{GalRaf}.
Our representation theorem puts these two rather distant logics (and more) under the same methodological umbrella. 

Despite the extensive literature devoted to classes of residuated lattices, there are still few 
effective structural descriptions.
All these results in \cite{AglMon,COM,GenMV,hajekBOOK,JF,JKpsBCK,lawson,Mos57,GJM,IdemP,Olson} (and also others where the focus is not a structural description in the first place such as in \cite{GalRaf,Gal2015} for example)
postulate semilinearity which renders the subdirectly irreducible members linearly ordered, and either integrality together with the naturally ordered condition\footnote{Or its dual notion, called divisibility.} or idempotency.
Our study contributes to the structural description of residuated lattices which are semilinear but neither integral nor naturally ordered nor idempotent; a first such effective structural description to the best of our knowledge besides \cite{JS_Hahn,Jenei_Hahn_err}. 

A representation theorem has been presented in \cite{JS_Hahn,Jenei_Hahn_err} for those odd involutive FL$_e$-chains where the number of idempotent elements of the algebra is finite by means of partial sublex products of abelian $o$-groups, which are well understood mathematical objects that are much more regular than what had been expected to need for describing these particular FL$_e$-chains.
In the present paper we prove a representation theorem for both even and odd involutive FL$_e$-chains without assuming any constrains on the set of their idempotent elements. 
The conic case is also settled.
While the construction in the representation of \cite{JS_Hahn,Jenei_Hahn_err} is done by starting with an abelian $o$-group and iteratively enlarging it by other abelian $o$-groups until the obtained structure becomes isomorphic to the given algebra, here we present a structural description using direct systems, without referring to iteration.
To this end, a core auxiliary result concerns a one-to-one correspondence between two lattice ordered classes: odd involutive FL$_e$-algebras and even involutive FL$_e$-algebras with an idempotent falsum constant. Here we make use of and further develop ideas of E. Casari \cite{casari}, see Section \ref{WaRMuPEVEN_ID}.

Since $\ell$-groups are very specific mathematical objects compared to residuated lattices, having a representation of a class of residuated lattices by subclasses of $\ell$-groups is a rarity.
It is reasonable to expect that the inherent larger complexity of the more general classes of residuated lattices renders such a representation, if exists, quite involved in its constructional part.
This phenomenon can be observed in \cite{Jenei_Hahn_err,JS_Hahn}, for example, where the notions of two different partial sublex product constructions together with the technique of iteration have been used to describe the class of residuated lattices which is in our focus in the present paper with the additional assumption that the number of idempotent elements of the algebra is finite.
In the present paper the complexity of the residuated structure is coded in the system of homomorphisms and in the delicate way of constructing the algebra from the direct system.
Applications of the main result of this paper to amalgamation and densification problems are foreshadowed in \cite{JS_amalg_dens}.

\section{Preliminaries}
An {\em FL$_e$-algebra} 
is a structure $( X, \wedge,\vee, \teALONE, \ite{\te}, t, f )$ such that 
$(X, \wedge,\vee )$ is a lattice, $( X,\leq, \teALONE,t)$ is a commutative, 
residuated monoid (the unit element $t$ is also referred to as the {\em truth} constant), and $f$ is an arbitrary constant, called the {\em falsum} constant. Being residuated means that there exists a binary operation $\ite{\te}$,
called the residual operation of $\teALONE$, such that $\g{x}{y}\leq z$ if and only if $\res{\te}{x}{z}\geq y$. This equivalence is called adjointness condition, ($\teALONE,\ite{\te}$) is called an adjoint pair. Equivalently, for any $x,z$, the set $\{v\ | \ \g{x}{v}\leq z\}$ has its greatest element, and $\res{\te}{x}{z}$, the residuum of $x$ and $z$, is defined as this element: $\res{\te}{x}{z}:=\max\{v\ | \ \g{x}{v}\leq z\}$; this is called the residuation condition. Being residuated implies that $\teALONE$ is increasing.
One defines the {\em residual complement operation} by $\nega{x}=\res{\te}{x}{f}$ and calls an FL$_e$-algebra {\em involutive} if $\nega{(\nega{x})}=x$ holds.
An involutive FL$_e$-algebra is called {\em odd} if the residual complement operation leaves the unit element fixed, that is, $\nega{t}=t$, and {\em even} if the following (two) quasi-identities hold: $x<t$ $\Leftrightarrow$ $x\leq f$. 
The former condition is equivalent to $f=t$, while 
the latter quasi-identities are equivalent to assuming that 
$f$ 
is the lower cover of 
$t$ (and $t$ 
is the upper cover of 
$f$)
if chains (or more generally conic algebras) are considered. 
An FL$_e$-algebra is called {\em integral} if the unit element of the multiplication is the top element of its universe.
Commutative residuated lattices are the $f$-free reducts of FL$_e$-algebras. 
The geometric meaning of $\nega{t}=f$, which is valid in all involutive FL$_e$-algebras, is that the two constants are
positioned symmetrically inside the underlying set. 
Therefore, one extremal setting is the integral case, when $t$ and $f$ are the top and bottom elements of $X$, respectively, and the other extremal one is the even or odd case when the two constants are both \lq\lq in the middle of $X$\rq\rq.
A bottom-top setup is not possible: if a residuated lattice has a bottom element then that element is annihilating, hence cannot be the unit of the multiplication\footnote{Unless the algebra is trivial.}.
Both odd or even involutive FL$_e$-chains and IMTL-chains are involutive residuated chains with a single additional postulate: the integrality condition of IMTL-chains postulates the unit element 
to be in one of its possible extremal positions (top), whereas the odd or even condition postulates the unit element to be in its other extremal position (in the middle). 

Section~\ref{PReLimiNAriEs} contains results on the local unit element function of involutive FL$_e$-algebras, a key concept for our representation theorem.
Even involutive FL$_e$-algebras with non-idempotent and idempotent falsum constants will be characterized with respect to odd involutive FL$_e$-algebras in Sections~\ref{WaRMuPEVEN_NID} and \ref{WaRMuPEVEN_ID}, respectively.
In Section~\ref{COnsTRUctioN2} even and odd involutive FL$_e$-chains will be partitioned into their so-called layer algebras, which are also even or odd involutive FL$_e$-chains but more specific ones than the original algebra in the sense that they are either cancellative or are close to being such: in the latter case there exists a canonical homomorphism which maps the layer algebra into a cancellative one from which the layer algebra can be uniquely recovered.
This specificity allows for establishing a connection between them and abelian $o$-groups in Section~\ref{COnsTRUctioN}, by using the characterizations of Sections~\ref{WaRMuPEVEN_NID} and \ref{WaRMuPEVEN_ID}. 
These lead to the main result of the paper in Section~\ref{NaVegre}: a one-to-one correspondence in a constructive manner between the class of all even or odd involutive commutative residuated chains and the class of bunches of layer groups. Layer groups are direct systems of abelian $o$-groups equipped with further structure.

\section{The local unit element function}\label{PReLimiNAriEs}

Let $(X, \leq)$ be a poset.
For $x,y\in X$ we say that $y$ is a cover of $x$ if $y>x$ and there exists no $z\in X$ such that $y>z>x$.
For $x\in X$ let $x_\uparrow$ be the unique cover of $x$ if such exists, and let $x_\uparrow=x$ otherwise.
Define $x_\downarrow$ dually.
Call ${}_\downarrow$ and ${}_\uparrow$ the neighbour operations of $(X, \leq)$.
A partially ordered algebra with a poset reduct is called {\em discretely ordered} if for any element $x$, $x_\downarrow<x<x_\uparrow$ holds.
If $\komp$ is an order-reversing involution of $X$ then it holds true that 
\begin{equation}\label{FelNeg_NegLe}
\nega{x}_\uparrow=\nega{(x_\downarrow)} \ \ \   \mbox{ and }  \ \ \ \nega{x}_\downarrow=\nega{(x_\uparrow)} .
\end{equation}

Algebras will be denoted by bold capital letters, their underlying sets by the same regular letter unless otherwise stated.
Sometimes the lattice operators of an FL$_e$-algebra will be replaced by their induced ordering $\leq$ in the signature, in particular, if an FL$_e$-{\em chain} is considered, that is, if the ordering is total.
Call the elements $x\geq t$ of an FL$_e$-algebra $\mathbf X=(X, \wedge, \vee,\teALONE, \ite{\te}, t, f)$ {\em positive}.
Call $\mathbf X$ {\em conic} if all elements of $X$ are comparable with $t$.
Assume $\mathbf X$ is involutive. 
For $x\in X$ let $$\tau(x)=\res{\te}{x}{x},$$ or equivalently, define $\tau(x)$ to be the greatest element of $Stab_x=\{ z\in X \ | \ \g{z}{x}=x \}.$
A key step toward our representation theorem is to understand the role of the $\tau$ function.
In investigating more specific odd involutive FL$_e$-algebras $\tau(x)$ was recognized to play the role of the \lq absolute value of $x$\rq\ and was denoted by $|x|$, see \cite{IdemP}.
The definition of layers in (\ref{RetegDefinicio}) and Lemma~\ref{tuttiINVOLUTIVE} reveal the true nature of $\tau$ in the present, more general setting:
$\tau$ can be view as a \lq local unit element\rq\ function.
For any positive idempotent element $u$ define
\begin{equation}\label{RetegDefinicio}
X_u=\{x\in X : \tau(x)=u\}
\end{equation}
and call it the $u$-layer of $\mathbf X$.
The $X_u$'s form a partition of $X$ by 
claim~\ref{zetaRange_idempotent} below, and 
if $x\in X_u$ then $\tau(x)$ is the unit element for the subset $X_u$ of $X$ by claim~\ref{tnelnagyobb}\footnote{For a first glance it might occur to the reader that the definition of $X_u$ is equivalent to the well-known way of localizing in semigroup theory, namely, for an idempotent $u$, to the universe
$M_u=\{x\in X : xu=x\}$ of the greatest unitary subsemigroup of $\mathbf X$ with unit $u$. 
It is not the case if $u>t$.
Rather, $X_u$ can be regarded as the \lq\lq outer edge\rq\rq\ of $M_u$ in the following sense:
it holds true that
$M_u=\bigcup_{v\leq u}X_v$
and
$X_u=M_u\setminus \bigcup_{v<u}X_v$
($u$ and $v$ are positive idempotents).
}.
Corresponding to the rotation-annihilation construction the idea of {\em decomposing} a class of FL$_e$-algebras with the help of a closely related notion, the {\em skeleton} function of the algebra\footnote{The skeleton function was defined there as the residual complement of what we coined local-unit function.} has been presented already  in \cite{JSIII}.
The name local unit function and the corresponding idea of using it to {\em localize} elements of the algebra has been introduced in \cite{JS_Hahn}, and the way of localizing as done in (\ref{RetegDefinicio}) is being introduced in the present paper, see also \cite{GroupReprArXiv}.
Some of the statements of the following lemma can be found in \cite{JS_Hahn} or in \cite{gjko}, too. We include their proofs to keep the paper self-contained.
As usual, when multiplication is denoted by $\teALONE$, we write $xy$ instead of $x\teALONE y$.

\begin{lemma}\label{tuttiINVOLUTIVE}
Let $\mathbf X=(X, \wedge, \vee,\teALONE, \ite{\te}, t, f)$ be an involutive FL$_e$-algebra.
The following statements hold true.
\begin{enumerate}[(i)]
\item\label{MiNdig}
$\nega{t}=f$,
\item\label{eq_quasi_inverse}
$\res{\te}{x}{y} = \nega{\left(\g{x}{\nega{y}}\right)}$,
\item\label{eq_feltukrozes}
if $t\geq f$ then $\g{x}{y}\leq\nega{\left(\g{\nega{x}}{\nega{y}}\right)}$,
\item\label{eq_feltukrozes_CS}
if the algebra is conic and either even or $t\leq f$ then $y_1>y$ implies $\nega{(\g{\nega{x}}{\nega{y}})}\leq\g{x}{y_1}$,
\item\label{tnelnagyobb}
$\g{\tau(x)}{x}=x$ and $\tau(x)\geq t$,
\item\label{ZetaOfIdempotent} 
$u\geq t$ is idempotent if and only if $\tau(u)=u$,
\item\label{zetaRange_idempotent}
$\{ \tau(x): x\in X \}$ is equal to the set of positive idempotent elements of $\mathbf X$,
\item\label{diagonalCOSinverse}
$\g{x_1}{y}>\g{x}{y}$ holds whenever $x_1>x$ and $y$ is invertible,
\item\label{NegArAAA}
$\tau(x)=\tau(\nega{x})$,
\item\label{canc_Fremegy}
if $\teALONE$ is cancellative then for $x\in X$, $\g{x}{\nega{x}}=f$,
\item\label{csoportLesz}
if $\mathbf X$ is odd then the $(X, \wedge, \vee,\teALONE, t)$-reduct of $\mathbf X$  is a {lattice-ordered abelian group} if and only if $\teALONE$ is cancellative \footnote{When we (loosely) speak about a subgroup of odd involutive FL$_e$-algebra $\mathbf X$ in the sequel, we shall mean a cancellative subalgebra of $\mathbf X$.},
\item\label{item_boundary_zeta}
for $x\geq t$, $\tau(x)\leq x$ holds,
\item\label{def A referred}
if $\mathbf X$ is an odd chain then $X_t$ contains all the invertible elements of $X$, and it is a universe of an abelian $o$-group, its inverse operation is the residual complement operation.
\item\label{eGGyelLejjebB}
if $\mathbf X$ is an odd or even, cancellative, discretely ordered chain then $\g{z}{t_\downarrow}=z_\downarrow$,

\item\label{diagonalSZIGORU}
if $\mathbf X$ is an odd or even chain then 
$\g{x}{y}<\g{x_1}{y_1}$ holds whenever $x<x_1$ and $y<y_1$,
\item\label{tau_lemma}
if $\mathbf X$ is an odd or even chain and $A$ is an $\mathbf X$-term which contains only the operations $\teALONE$, $\ite{\te}$ and \, $\komp$
then
for any evaluation $e$ of the variables of $A$ into $X$, $\tau(e(A))$ equals the maximum of the $\tau$-values of the variables and constants of $A$ under $e$,
\end{enumerate}
\end{lemma}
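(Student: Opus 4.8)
The plan is to argue by structural induction on the term $A$, which is built from variables and the constants $t,f$ using only the operations $\te$, $\ite{\te}$ and $\komp$. In the base cases, if $A$ is a variable the assertion is vacuous, and if $A$ is a constant then $\tau(e(A))=t$: indeed $\tau(t)=t$ by claim~(\ref{ZetaOfIdempotent}), $t$ being a positive idempotent, while $\tau(f)=\tau(\nega{t})=\tau(t)=t$ by claims~(\ref{MiNdig}) and~(\ref{NegArAAA}) (in the odd case even $f=\nega{t}=t$). As $t$ is the least positive idempotent, adjoining the $\tau$-values of constants to a maximum never changes it, so the statement holds for one-symbol terms as well.

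The heart of the argument is the identity
\begin{equation*}
\tau(\g{a}{b})=\max(\tau(a),\tau(b))\qquad\text{for all }a,b\in X.
\end{equation*}
For ``$\geq$'', recall that $\tau(y)$ is the greatest element of $Stab_y$; by claim~(\ref{tnelnagyobb}) with commutativity and associativity, $\g{\tau(a)}{(\g{a}{b})}=\g{(\g{\tau(a)}{a})}{b}=\g{a}{b}$ and symmetrically $\g{\tau(b)}{(\g{a}{b})}=\g{a}{b}$, so $\tau(a),\tau(b)\in Stab_{\g{a}{b}}$ and hence $\tau(\g{a}{b})\geq\max(\tau(a),\tau(b))$. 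For ``$\leq$'', set $w=\tau(\g{a}{b})$ and suppose toward a contradiction $w>\tau(a)$ and $w>\tau(b)$. By claim~(\ref{zetaRange_idempotent}) $w$ is a positive idempotent, so $w\geq t$ and $\g{w}{w}=w$; since $\te$ is increasing, $\g{w}{a}\geq\g{t}{a}=a$ and likewise $\g{w}{b}\geq b$, and neither is an equality, for $\g{w}{a}=a$ (resp.\ $\g{w}{b}=b$) would place $w$ in $Stab_a$ (resp.\ $Stab_b$), i.e.\ give $w\leq\tau(a)$ (resp.\ $w\leq\tau(b)$), against the assumption. As $X$ is a chain, $\g{w}{a}>a$ and $\g{w}{b}>b$, so, using idempotency of $w$ with commutativity and associativity, $\g{w}{(\g{a}{b})}=\g{(\g{w}{a})}{(\g{w}{b})}$, which exceeds $\g{a}{b}$ by the strict monotonicity of claim~(\ref{diagonalSZIGORU}); this contradicts $\g{w}{(\g{a}{b})}=\g{a}{b}$ from claim~(\ref{tnelnagyobb}). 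Hence $w\leq\max(\tau(a),\tau(b))$.

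Granting this identity, the inductive step is routine, since the variables and constants of $\nega{B}$ are exactly those of $B$, while those of $\g{B}{C}$ and of $\res{\te}{B}{C}$ form the union of those of $B$ and $C$. If $A=\nega{B}$ then $\tau(e(A))=\tau(\nega{e(B)})=\tau(e(B))$ by claim~(\ref{NegArAAA}), so the induction hypothesis for $B$ settles it. If $A=\g{B}{C}$, the displayed identity and the induction hypotheses for $B$ and $C$ give $\tau(e(A))=\max(\tau(e(B)),\tau(e(C)))$, the desired maximum. If $A=\res{\te}{B}{C}$ then $e(A)=\nega{\bigl(\g{e(B)}{\nega{e(C)}}\bigr)}$ by claim~(\ref{eq_quasi_inverse}), whence, using claim~(\ref{NegArAAA}) and the product identity, $\tau(e(A))=\tau\bigl(\g{e(B)}{\nega{e(C)}}\bigr)=\max\bigl(\tau(e(B)),\tau(\nega{e(C)})\bigr)=\max(\tau(e(B)),\tau(e(C)))$, and one concludes as before.

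The only real obstacle is the product identity $\tau(\g{a}{b})=\max(\tau(a),\tau(b))$; once the strict monotonicity of $\te$ on odd or even chains (claim~(\ref{diagonalSZIGORU})) and the idempotency of $\tau$-values are available, even that is short, and the complement and residuation cases collapse onto it immediately via claims~(\ref{NegArAAA}) and~(\ref{eq_quasi_inverse}).
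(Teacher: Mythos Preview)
Your argument is correct and essentially mirrors the paper's: both hinge on the identity $\tau(\g{a}{b})=\max(\tau(a),\tau(b))$, proved via $Stab_a\subseteq Stab_{\g{a}{b}}$ for one inequality and claim~(\ref{diagonalSZIGORU}) for the other, together with $\tau(\nega{a})=\tau(a)$ from claim~(\ref{NegArAAA}). The only cosmetic difference is that the paper first rewrites any $\ite{\te}$ away via claim~(\ref{eq_quasi_inverse}) and then inducts on terms in $\te,\komp$ alone, whereas you keep $\ite{\te}$ as a separate induction case and unfold it there; you also spell out the base case for the constants $t,f$, which the paper leaves implicit.
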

\begin{proof}
\begin{enumerate}[(i)]
\item
Obvious by residuation.
\item
Using that $\komp$ is an involution one obtains 
$\nega{(\g{x}{\nega{y}})}=\res{\te}{\left(\g{x}{\nega{y}}\right)}{f}=\res{\te}{x}{\left( \res{\te}{\nega{y}}{f} \right)}=\res{\te}{x}{y}$ (folklore). 
\item
Next,
$\g{(\g{x}{y})}{(\g{\nega{x}}{\nega{y}})}=
\g{[\g{x}{(\res{\te}{x}{f})}]}{[\g{y}{(\res{\te}{y}{f})}]}
\leq \g{f}{f}
\leq \g{t}{f}=f$,
hence $\g{x}{y} \leq \res{\te}{(\g{\nega{x}}{\nega{y}})}{f}$ follows by adjointness. 
\item
Since  the algebra is conic and involutive, every element is comparable with $f$, too.
Indeed, if for any $a\in X$, $a$ were not comparable with $f$ then, since $\komp$ is an order reversing involution, $\nega{a}$ were not comparable with $\nega{f}$, and $\nega{f}=t$ since the algebra is involutive, a contradiction.
Therefore, by residuation, $y_1>\nega{(\nega{y})}=\res{\te}{\nega{y}}{f}$ implies 
$\g{y_1}{\nega{y}}\not\leq f$, that is, 
$\g{y_1}{\nega{y}}>f$. 
If $t\leq f$ then $\g{y_1}{\nega{y}}\geq t$ follows.
Likewise, if the algebra is even then $\g{y_1}{\nega{y}}\not\leq f$
implies $\g{y_1}{\nega{y}}\not< t$, that is, 
$\g{y_1}{\nega{y}}\geq t$, since the algebra is conic.
Therefore, 
$\nega{(\g{x}{y_1})}=\g{\nega{(\g{x}{y_1})}}{t}\leq\g{\nega{(\g{x}{y_1})}}{\g{y_1}{\nega{y}}}
\overset{\claim \ref{eq_quasi_inverse}}{=}
\g{(\g{y_1}{(\res{\te}{y_1}{\nega{x}})})}{\nega{y}}\leq\g{\nega{x}}{\nega{y}}$ follows.
\item
Since $t$ is the unit element, $Stab_x$ is nonempty. Therefore, by residuation $\tau(x)=\res{\te}{x}{x}$ is its largest element, hence $\g{\tau(x)}{x}=x$ holds.
Since $\g{t}{x}=x$, $\tau(x)\geq t$ follows by residuation.
\item
If $u\geq t$ is idempotent then from $\g{u}{u}=u$, $\res{\te}{u}{u}\geq u$ follows by adjointness. But for any $z>u$, $\g{u}{z}\geq\g{t}{z}=z>u$, 
hence $\tau(u)=u$ follows. 
On the other hand, by claim~\ref{tnelnagyobb}, $\tau(u)=u$ implies $u\geq t$, and also the idempotency of $u$ follows since $\g{u}{u}=\g{u}{\tau(u)}=u$.
\item
If $u>t$ is idempotent then claim~\ref{ZetaOfIdempotent} shows that $u$ is in the range of $\tau$.
If $u$ is in the range of $\tau$, that is $\tau(x)=u$ for some $x\in X$ then
if $\tau(\tau(x))=\tau(x)$ then it 
implies $\tau(u)=u$, hence $u$ is a positive idempotent element by claim~\ref{ZetaOfIdempotent}, and we are done.
Hence it suffices to prove $\tau(\tau(x))=\tau(x)$ for all $x$.
By claim~\ref{eq_quasi_inverse}, $\res{\te}{x}{x}=\tau(x)$ is equivalent to $\g{x}{\nega{x}}=\nega{\tau(x)}$.
Hence, $\g{\tau(x)}{\nega{\tau(x)}}=\g{\tau(x)}{(\g{x}{\nega{x}})}=\g{\g{(\tau(x)}{x})}{\nega{x}}
\overset{\claim \ref{tnelnagyobb}}{=}
\g{x}{\nega{x}}=\nega{\tau(x)}$ follows, which is equivalent to $\tau(\tau(x))=\res{\te}{\tau(x)}{\tau(x)}=\tau(x)$.
\item
$\g{x_1}{y}\geq\g{x}{y}$ holds by monotonicity, and if $\g{x_1}{y}=\g{x}{y}$ then $x_1=\g{\g{x_1}{y}}{y^{-1}}=\g{\g{x}{y}}{y^{-1}}=x$, a contradiction.
\item
By claim~\ref{eq_quasi_inverse} and the involutivity of $\komp$, 
$\tau(x)=\res{\te}{x}{x}=\nega{(\g{x}{\nega{x}})}=\nega{(\g{\nega{x}}{\nega{\nega{x}}})}=\res{\te}{\nega{x}}{\nega{x}}=\tau(\nega{x})$.
\item
Since $\teALONE$ is cancellative, the strictly increasing nature of $\teALONE$ clearly follows: if $u<v$ then $\g{u}{w}<\g{v}{w}$. Therefore, $\g{x}{a}>\g{x}{t}=x$ for any $a>t$, and hence $\res{\te}{x}{x}=t$. 
An application of claim~\ref{eq_quasi_inverse} ends the proof.
\item
Necessity is straightforward, sufficiency follows from claim~\ref{canc_Fremegy} since $f=t$.
\item
For $x\geq t$, 
$x\overset{\claim \ref{tnelnagyobb}}{=}
\g{x}{\tau(x)}\geq\g{t}{\tau(x)}=\tau(x)$.
\item
If $x\in X_t$ then $\tau(x)=\res{\te}{x}{x}=t$ and hence $\g{x}{\nega{x}}
\overset{\claim \ref{eq_quasi_inverse}}{=}
\nega{t}
\overset{\claim \ref{MiNdig}}{=}
f=t$ holds since the algebra is odd. This shows that all elements of $X_t$ are invertible in $X$ and the inverse operation is $\komp$.
On the other hand, if $x\in X$ is invertible, that is, $\g{x}{y}=t$ for some $y\in X$ then $\tau(x)=t$: for every $z>t$ it holds true that $\g{x}{z}>x$ since the opposite, that is, $\g{x}{z}=x$ would imply $z=\g{t}{z}=\g{(\g{y}{x})}{z}=\g{y}{(\g{x}{z})}=\g{y}{x}=t$, a contradiction.
Invertible element are clearly closed under $\teALONE$ and $\komp$.
Since the order is total, the meet and the join of two invertible elements are also invertible.
\item
Since $\teALONE$ is cancellative, $\teALONE$ is strictly increasing; we shall use it without further mention.
Now, $\g{z}{t_\downarrow}<\g{z}{t}=z$ holds since $t_\downarrow<t$ follows from the algebra being discretely ordered. 
Contrary to the statement, assume that there exists $a$ such that $\g{z}{t_\downarrow}<a<z.$
Multiplying with $\nega{z}$,
$
\g{f}{t_\downarrow}<\g{a}{\nega{z}}<f
$
follows by claim~\ref{canc_Fremegy}.
In the odd case it yields $t_\downarrow<\g{a}{\nega{z}}<t$, a contradiction to the definition of ${}_\downarrow$. 
In the even case it yields
$\g{t_\downarrow}{t_\downarrow}<\g{a}{\nega{z}}<t_\downarrow.$
Since $t_\downarrow=\g{t}{t_\downarrow}<\g{t_\uparrow}{t_\downarrow}<\g{t_\uparrow}{t}=t_\uparrow$, we obtain $\g{t_\uparrow}{t_\downarrow}=t$, and hence multiplication by $t_\uparrow$ implies
$t_\downarrow<\g{t_\uparrow}{\g{a}{\nega{z}}}<t,$
a contradiction to the definition of ${}_\downarrow$.

\item
By claim~\ref{eq_feltukrozes}, 
$\nega{\left(\g{\nega{x}}{\nega{y}}\right)}\geq\g{x}{y}$ holds, hence it suffices to prove $\g{x_1}{y_1}>\nega{\left(\g{\nega{x}}{\nega{y}}\right)}$.
Assume the opposite, which is $\g{x_1}{y_1}\leq\nega{\left(\g{\nega{x}}{\nega{y}}\right)}$ since $(X,\leq)$ is a chain.
By adjointness we obtain $\g{(\g{\nega{x}}{x_1})}{(\g{\nega{y}}{y_1})}=\g{(\g{x_1}{y_1})}{(\g{\nega{x}}{\nega{y}})}\leq f$, and
from $x_1>x=\nega{(\nega{x})}$, $\g{\nega{x}}{x_1}>f$ follows by residuation since $(X,\leq)$ is a chain.
\\
In the odd case these reduce to $\g{(\g{\nega{x}}{x_1})}{(\g{\nega{y}}{y_1})}\leq t$ and $\g{\nega{x}}{x_1}>t$.
Analogously we obtain $\g{\nega{y}}{y_1}>t$.
Therefore $\g{(\g{\nega{x}}{x_1})}{(\g{\nega{y}}{y_1})}\geq \g{(\g{\nega{x}}{x_1})}{t}=\g{\nega{x}}{x_1}>t$ follows, a contradiction.
\\
In the even case these reduce to $\g{(\g{\nega{x}}{x_1})}{(\g{\nega{y}}{y_1})}<t$ and $\g{\nega{x}}{x_1}\geq t$.
Analogously we obtain $\g{\nega{y}}{y_1}\geq t$.
Therefore $\g{(\g{\nega{x}}{x_1})}{(\g{\nega{y}}{y_1})}\geq \g{t}{t}=t$ follows, a contradiction.

\item
We have already seen $\tau(x)=\tau(\nega{x})$ in claim~\ref{NegArAAA}.\\
Next, we claim $\tau(\g{x}{y})=\max(\tau(x),\tau(y)) (=\g{\tau(x)}{\tau(y)}$\footnote{For positive idempotents $u\leq v$ it holds true that $\g{u}{v}=\max(u,v)$
since $v=\g{t}{v}\leq\g{u}{v}\leq\g{v}{v}=v$.}).
Indeed, $\tau(\g{x}{y})\geq\tau(x)$ holds by residuation since $\res{\te}{\g{x}{y}}{\g{x}{y}}\geq\res{\te}{x}{x}$ is equivalent to $\g{\g{y}{x}}{(\res{\te}{x}{x})}\leq\g{x}{y}$. 
Assume $z:=\tau(\g{x}{y})>\max(\tau(x),\tau(y))$.
Since $\tau$ assigns to $x$ the greatest element of the stabilizer set of $x$, 
therefore
$z>\max(\tau(x),\tau(y))$ implies that $z$ does not stabilize $x$ neither $y$, hence
$x<\g{z}{x}$ and $y<\g{z}{y}$ holds by the monotonicity of $\teALONE$.
On the other hand, 
$\g{(\g{z}{x})}{(\g{z}{y})}=\g{(\g{(\g{x}{y})}{z})}{z}\overset{\claim \ref{tnelnagyobb}}{=}
\g{x}{y}$ follows, a contradiction to claim~\ref{diagonalSZIGORU}. This settles the claim.
\\
By claim~\ref{eq_quasi_inverse}, any term which contains only the connectives $\teALONE$, $\ite{\te}$ and \, $\komp$
can be represented by an equivalent term using the same variables and constants but containing only $\teALONE$ and $\komp$. 
An easy induction on the recursive structure of this equivalent term using the two claims above concludes the proof.
\end{enumerate}
\end{proof}

The next lemma states that for an odd involutive FL$_e$-algebra, it is exactly cancellativity which is needed to make it a lattice ordered group.

\begin{lemma}\label{inducedDEF}\rm
\begin{enumerate}
\item
For a cancellative odd involutive FL$_e$-algebra 
$$\mathbf X=(X,\wedge, \vee,\teALONE,\ite{\te},t,t)$$
with residual complement $\komp$,
$\lambda(\mathbf X)=(X,\wedge, \vee,\teALONE,\ { }^{-1},t)$
is a lattice-ordered abelian group,
called the {\em lattice-ordered abelian group induced by $\mathbf X$},
where
\begin{equation}\label{EzAzInverz}
x^{-1}=\nega{x}=\res{}{x}{t}
,
\end{equation}
\item\label{CsopToAlg}
For a lattice-ordered abelian group $$\textbf{\textit{G}}=(G,\leq,\teALONE,\ { }^{-1},t),$$
$\iota(\textbf{\textit{G}})=(G,\wedge, \vee,\teALONE,\ite{\te},t,t)$
is a cancellative odd involutive FL$_e$-algebra,
called the {\em cancellative odd involutive FL$_e$-algebra induced by $\textbf{\textit{G}}$},
where 
\begin{equation}\label{EzAResiduum}
\res{}{x}{y}=\g{x^{-1}}{y},
\end{equation}
\begin{equation}\label{EzAResComplement}
\nega{x}=
x^{-1},
\end{equation}
\item 
With the above assumptions it holds true that
$\iota(\lambda(\mathbf X))=\mathbf X$ and $\lambda(\iota(\textbf{\textit{G}}))=\textbf{\textit{G}}$.
\end{enumerate}
\end{lemma}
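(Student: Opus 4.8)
The plan is to verify the three items essentially by unwinding the definitions and invoking the characterizations already collected in Lemma~\ref{tuttiINVOLUTIVE}. For item~(1), I would start from a cancellative odd involutive FL$_e$-algebra $\mathbf X$ and check that $\iota(\mathbf X)=(X,\wedge,\vee,\te,{}^{-1},t)$ is a lattice-ordered abelian group. The lattice reduct $(X,\wedge,\vee)$ is unchanged, and $(X,\te,t)$ is a commutative monoid by hypothesis; the only genuine thing to prove is that $x^{-1}:=\nega{x}$ is a two-sided (hence two-sided $=$ one-sided by commutativity) inverse, i.e.\ $\g{x}{\nega{x}}=t$. This is exactly claim~(\ref{canc_Fremegy}) of Lemma~\ref{tuttiINVOLUTIVE} together with oddness ($f=t$): cancellativity forces $\g{x}{\nega{x}}=f=t$. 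Compatibility of $\te$ with the order ($a\leq b\Rightarrow \g{a}{c}\leq\g{b}{c}$) is monotonicity of $\te$, which holds in any residuated structure; this makes $\iota(\mathbf X)$ a lattice-ordered group, and it is abelian since $\te$ is commutative. One should also remark that in a lattice-ordered group the group inverse is automatically order-reversing, consistent with $\komp$ being an order-reversing involution; alternatively this is subsumed by claim~(\ref{def A referred}).

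For item~(2), given a lattice-ordered abelian group $\textbf{\textit{G}}$ I would define $\res{}{x}{y}:=\g{x^{-1}}{y}$ and $\nega{x}:=x^{-1}$ and check the FL$_e$-axioms for $\iota(\textbf{\textit{G}})=(G,\wedge,\vee,\te,\ite{},t,t)$. The lattice and commutative-monoid parts are inherited. The adjointness condition $\g{x}{y}\leq z \iff y\leq\res{}{x}{z}$ reduces, after substituting the definition of the residuum, to $\g{x}{y}\leq z \iff y\leq \g{x^{-1}}{z}$, which is immediate by multiplying both sides by $x^{-1}$ (order-compatibility of multiplication in an $\ell$-group is an equivalence, not just an implication, precisely because multiplication by a fixed element is an order-automorphism). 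So $(\te,\ite{})$ is an adjoint pair and $\iota(\textbf{\textit{G}})$ is an FL$_e$-algebra with truth constant $t$ and falsum constant $t$. It remains to see it is odd and involutive: the residual complement is $\nega{x}=\res{}{x}{t}=\g{x^{-1}}{t}=x^{-1}$, so $\nega{(\nega{x})}=(x^{-1})^{-1}=x$ (involutivity) and $\nega{t}=t^{-1}=t$ (oddness). This also confirms formula~(\ref{EzAResComplement}) is consistent with the general definition $\nega{x}=\res{\te}{x}{f}$.

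For item~(3), I would show the two constructions are mutually inverse. For $\iota(\iota(\mathbf X))=\mathbf X$: applying $\iota$ to the $\ell$-group $\iota(\mathbf X)$ produces an FL$_e$-algebra on the same lattice and monoid, with residuum $x^{-1}\te y$ where $x^{-1}=\nega{x}$ is the $\komp$ of $\mathbf X$; by claim~(\ref{eq_quasi_inverse}) (or directly by adjointness using $\g{x}{\nega{x}}=t$) this equals $\res{\te}{x}{y}$ in $\mathbf X$, and the residual complement recovered is $x^{-1}=\nega{x}$, so all operations and both constants coincide with those of $\mathbf X$. For $\iota(\iota(\textbf{\textit{G}}))=\textbf{\textit{G}}$: applying $\iota$ to the FL$_e$-algebra $\iota(\textbf{\textit{G}})$ recovers the inverse via $x^{-1}=\res{}{x}{t}$, and by~(\ref{EzAResiduum}) this is $\g{x^{-1}}{t}=x^{-1}$, the original group inverse; the lattice, the monoid, and the unit are untouched, so we are back at $\textbf{\textit{G}}$.

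The routine verifications dominate here; the one point deserving care is the ``only if'' direction of the adjointness equivalences in items~(1) and~(2)—that order-compatibility of multiplication in an $\ell$-group really is a biconditional—and, dually, that cancellativity is genuinely what is needed in item~(1) to obtain $\g{x}{\nega{x}}=t$, which is where Lemma~\ref{tuttiINVOLUTIVE}(\ref{canc_Fremegy}) does the essential work. Everything else is bookkeeping about which operation is being recomputed in terms of which.
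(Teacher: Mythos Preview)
Your proposal is correct and follows essentially the same approach as the paper. The paper's proof is extremely terse---it invokes claim~(\ref{csoportLesz}) of Lemma~\ref{tuttiINVOLUTIVE} for item~(1) and dismisses items~(2) and~(3) as ``folklore or obvious''---whereas you spell out the details; in particular, your use of claim~(\ref{canc_Fremegy}) together with oddness to obtain $\g{x}{\nega{x}}=t$ is exactly the content of the sufficiency direction of claim~(\ref{csoportLesz}), so the arguments align.
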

\begin{proof}
Claim~\ref{csoportLesz} in Lemma~\ref{tuttiINVOLUTIVE} confirms the first statement, the rest is folklore or obvious.
\end{proof}

The following lemma will simplify the proof of Theorem~\ref{sPliT} and Lemma~\ref{BUNCHalg_X}.
Let ${\mathcal M}=(M,\leq,\teALONE)$ be a structure such that $(M,\leq)$ is a poset and $(M,\teALONE)$ is a commutative semigroup.
Call $c\in M$ a dualizing element\footnote{Dualizing elements have been defined only in {\em residuated} structures in the literature, see e.g. \cite[Section 3.4.17.]{gjko}.}  of $\mathcal M$, if 
(i) for $x\in M$ there exists $\res{\te}{x}{c}$\footnote{That is, the exists the greatest element of the set $\{ z\in M \ | \ \g{x}{z\leq c}\}$.}, and 
(ii) for $x\in M$, $\res{\te}{(\res{\te}{x}{c})}{c}=x$.

\begin{lemma}\label{SimpLER}
If there exists a dualizing element $c$ of $\mathcal M$ then $\teALONE$ is residuated and its residual operation is given by $\res{\te}{x}{y}=\res{\te}{\left(\g{x}{(\res{\te}{y}{c})}\right)}{c}$.
\end{lemma}
\begin{proof}
$\g{z}{x}\leq y$ is equivalent to $\g{z}{x}\leq\res{\te}{\left(\res{\te}{y}{c}\right)}{c}$.
By adjointness it is equivalent to $\g{\left(\g{z}{x}\right)}{(\res{\te}{y}{c})}\leq c$.
By associativity it is equivalent to $\g{z}{\left(\g{x}{(\res{\te}{y}{c})}\right)}\leq c$, which is equivalent to $z\leq \res{\te}{\left(\g{x}{(\res{\te}{y}{c})}\right)}{c}$ by adjointness.
By residuation $\res{\te}{x}{y}=\res{\te}{\left(\g{x}{(\res{\te}{y}{c})}\right)}{c}$ follows.
\end{proof}

\section{Odd and even involutive FL$_e$-chains vs.\,bunches of layer algebras}\label{COnsTRUctioN2}

We shall prove the main theorem of the paper for three different kinds of involutive FL$_e$-chains:
for odd involutive FL$_e$-chains,
for even involutive FL$_e$-chains with an idempotent falsum, and
for even involutive FL$_e$-chains with a non-idempotent falsum.

\begin{definition}\label{DEFbunch2}
Let $(\kappa,\leq_\kappa)$ be a totally ordered set with least element $t$, and
let an ordered triple $\langle \bar\kappa_I, \bar\kappa_J, \{t\}\rangle$
be a partition of $\kappa$, where $\bar\kappa_I$ and $\bar\kappa_J$ can also be empty. 
Define $\kappa_o$, $\kappa_J$, and $\kappa_I$ by one of the rows of Table~\ref{ThetaPsiOmega} \footnote{Explanation: if the first, the second, or the third row is used then the algebra corresponding to the bunch will be odd, even with a non-idempotent falsum constant, or even with an idempotent falsum constant, respectively.}. 
\begin{table}[ht]
\begin{center}
\caption{}
\begin{tabular}{c|c|cl}
 $\kappa_o$ & $\kappa_J$ & $\kappa_I$ \\
\hline 
\{t\} & $\bar\kappa_J$ & $\bar\kappa_I$ \\
\hline
$\emptyset$ & $\bar\kappa_J\cup\{t\}$ & $\bar\kappa_I$ \\
\hline
$\emptyset$ & $\bar\kappa_J$ & $\bar\kappa_I\cup\{t\}$ \\
\hline
\end{tabular}
\label{ThetaPsiOmega}
\end{center}
\end{table}

\noindent
Let
$\mathbf X_u=(X_u,\leq_u,,\teu,\ite{u},u,\negaM{u}{u})$
be a family of involutive FL$_e$-chains 
indexed by elements of $\kappa$ (let $\kompM{u}$ denote the residual complement operation, ${}_{\downarrow_u}$ and ${}_{\uparrow_u}$ the neighbour operations of $\mathbf X_u$),
such that $\mathbf X_u$ is
\begin{equation}\label{KiKiLesz}
\left\{
\begin{array}{ll}
\mbox{cancellative and odd} & \mbox{if $u\in\kappa_o$}\\
\mbox{discretely ordered, cancellative and even\blfootnote{Hence with a non-idempotent falsum.} }& \mbox{if $u\in\kappa_J$}\\
\mbox{even with an idempotent falsum satisfying $\gteu{x}{\negaM{u}{x}}=\negaM{u}{u}$} & \mbox{if $u\in\kappa_I$}\\
\end{array}
\right. ,
\setcounter{footnote}{10} 
\footnote{Hence with a non-idempotent falsum.}
\end{equation}
and such that for $u,v\in\kappa$, $u\leq_\kappa v$, there exist a
\begin{equation}\label{RHOhomo}
\mbox{homomorphism
$\rho_{u\to v}$
}
\end{equation}
 from the residuated lattice reduct of $\mathbf X_u$ 
 to the residuated lattice reduct of $\mathbf X_v$ satisfying 
\begin{itemize}
\item[(A1)] 
$\rho_{u\to u}=id_{X_u}$ and $\rho_{v\to w}\circ\rho_{u\to v}=\rho_{u\to w}$ 
\hfill (direct system property),
\item[(A2)] 
for $u<_\kappa v$,
$\rho_{u\to v}(u)=\rho_{u\to v}(\negaM{u}{u})
$
\footnote{If $u\in\kappa_o$ then $\rho_{u\to v}(u)=\rho_{u\to v}(\negaM{u}{u})
$
trivially holds since $\negaM{t}{t}=t$.}
\hfill (constants' collision condition).
\end{itemize}
Call 
${\mathcal A}=\langle \mathbf X_u, \rho_{u\to v} \rangle_{\langle \kappa_o, \kappa_J, \kappa_I,\leq_\kappa\rangle}$
a {\em bunch of layer algebras}.
Call the $\mathbf X_u$'s the layer algebras, call $\langle\kappa,\leq_\kappa\rangle$ the {\em skeleton}, call $\langle \kappa_o, \kappa_J, \kappa_I \rangle$ the {\em partition} of the skeleton, and call $\langle \mathbf X_u, \rho_{u\to v} \rangle_\kappa$ the direct system of layer algebras over $\kappa$.
Note that $\kappa$ can be recovered from its partition, (and ultimately, from $\mathcal A$) via $\kappa=\kappa_o\cup\kappa_J\cup\kappa_I$.
\end{definition}

\medskip
We prove that every odd or even involutive FL$_e$-chain can be represented by a unique bunch of layer algebras. Later, in Section~\ref{COnsTRUctioN} we prove that every bunch of layer algebras can be  represented by a unique bunch of layer groups.

\begin{lemma}\label{BUNCHalg_X}
The following statements hold true.
\begin{enumerate}
\item 
Given an odd or an even involutive FL$_e$-chain $\mathbf X=(X,\leq,\teALONE,\ite{\te},t,f)$ with residual complement operation $\komp$,
$${\mathcal A}_\mathbf X=\langle \mathbf X_u, \rho_{u\to v} \rangle_{\boldsymbol\kappa}$$
is a bunch of layer algebras, 
called the {\em bunch of layer algebras}\,of $\mathbf X$,
where
$\tau(x)=\res{\te}{x}{x}$, $\kappa=\{\tau(x) : x\in X\}$,
$\leq_\kappa\,=\,\leq\,\cap \ (\kappa\times \kappa)$,
$\bar\kappa_I=\{u\in \kappa\setminus\{t\} : \nega{u} \mbox{ is idempotent}\},$
$\bar\kappa_J=\{u\in \kappa\setminus\{t\} : \nega{u} \mbox{ is not idempotent}\}$,
$\kappa_o$, $\kappa_J$, $\kappa_I$ are defined by Table~\ref{MasodiKK},
\begin{table}[h]
\begin{center}
\caption{ }
\begin{tabular}{c|c|c|cll}
$\kappa_o$ & $\kappa_J$ & $\kappa_I$ & \\
\hline
\{t\} & $\bar\kappa_J$ & $\bar\kappa_I$ & if $\mathbf X$ is odd\\
\hline
$\emptyset$ & $\bar\kappa_J\cup\{t\}$ & $\bar\kappa_I$ & if $\mathbf X$ is even and $f$ is not idempotent \\
\hline
$\emptyset$ & $\bar\kappa_J$ & $\bar\kappa_I\cup\{t\}$ & if $\mathbf X$ is even and $f$ is idempotent\\
\hline
\end{tabular}
\label{MasodiKK}
\end{center}
\end{table}

$\boldsymbol\kappa=\langle \kappa_o, \kappa_J, \kappa_I,\leq_\kappa\rangle$,
for $u\in \kappa$,
\begin{equation}\label{EQezLeszAgUUUUU}
\mathbf X_u=(X_u,\leq_u,\teu,\ite{u},u,\nega{u}),
\end{equation}
where $$X_u=\{x\in X : \tau(x)=u\},$$
$\leq_u$, $\teu$, and $\ite{u}$ are restrictions of $\leq$, $\teALONE$, and $\ite{\te}$ to $X_u$,  
for $x\in X_u$, $\negaM{u}{x}=\res{\te}{x}{\nega{u}}$,
and for $u,v\in \kappa$, $u\leq_\kappa v$,  $\rho_{u\to v} : X_u \to X_v$ is given by
\begin{equation}\label{EzARho}
\rho_{u\to v}(x)=\g{v}{x}.
\end{equation}
\item\label{dfgdkjkjHKJhKJNHG}
Given a bunch of layer algebras
${\mathcal A}=\langle \mathbf X_u, \rho_{u\to v} \rangle_{\boldsymbol\kappa}$ with $\boldsymbol\kappa={\langle \kappa_o, \kappa_J, \kappa_I, \leq_\kappa\rangle}$,
$\mathbf X_u=(X_u,\leq_u,\teu,\ite{u},u,\negaM{u}{u})$,
and 
$\negaM{u}{x}=\res{u}{x}{\negaM{u}{u}}$, 
$$
\mathcal X_{\mathcal A}=(X,\leq,\teALONE,\ite{\te},t,\nega{t})
$$
is an involutive FL$_e$-chain,
called the {\em involutive FL$_e$-chain derived from $\mathcal A$},
where
\begin{equation}\label{EZazX}
X=\displaystyle\dot \bigcup_{u\in \kappa}X_u
,
\end{equation}
for $v\in\kappa$, $\rho_v : X\to X$ is defined by
\begin{equation}\label{P5}
\rho_v(x)=\left\{
\begin{array}{ll}
\rho_{u\to v}(x) & \mbox{ if $u<_\kappa v$ and $x\in X_u$}\\
x & \mbox{ if $u\geq_\kappa v$ and $x\in X_u$}
\end{array}
\right. ,
\end{equation}
for short,
for $x\in X_u$ and $y\in X_v$,\footnote{Note that for $u,v\in\kappa$, $uv=\max_\kappa(u,v)$ since $u,v$ are positive idempotents.}
\begin{equation}\label{RendeZes}
\mbox{$x<y$ iff $\rho_{uv}(x)<_{uv}\rho_{uv}(y)$ or $\rho_{uv}(x)=\rho_{uv}(y)$ and $u<_\kappa v$}
\footnote{
Alternatively, we may write 
$x\leq y$ iff 
$\rho_{uv}(x)\leq_{uv}\rho_{uv}(y)$ except if $u>_\kappa v$ and $\rho_{uv}(x)=\rho_{uv}(y)$
}
\end{equation}
\begin{equation}
\label{EgySzeruTe}
\g{x}{y}=\gteM{uv}{\rho_{uv}(x)}{\rho_{uv}(y)},
\end{equation}
\begin{equation}\label{IgyKellEztCsinalni}
\mbox{
$\nega{x}=\negaM{u}{x}$,
}
\end{equation}
\begin{equation}\label{IgYaReSi}
\res{\te}{x}{y}=\nega{(\g{x}{\nega{y}})},
\end{equation}
and $t$ is the least element of $\kappa$.
$\mathcal X_{\mathcal A}$ is odd if $t\in\kappa_o$, 
even with a non-idempotent falsum if $t\in\kappa_J$, and 
even with an idempotent falsum if $t\in\kappa_I$.
\item 
For a bunch of layer algebras $\mathcal A$, $\mathcal A_{\left(\mathcal X_{\mathcal A}\right)}=\mathcal A$, and
for an odd or even involutive FL$_e$-chain $\mathbf X$, $\mathcal X_{\mathcal A_\mathbf X}=\mathbf X$.
\end{enumerate}
\end{lemma}
\begin{proof}
(1):
$\kappa$ is the set of positive idempotent elements of $\mathbf X$ by claim~\ref{zetaRange_idempotent} in Lemma~\ref{tuttiINVOLUTIVE}.
Therefore, the least element of $\kappa$ is $t$, and 
$\kappa$, being a subset of $X$, is totally ordered.
The ordered triple $\langle \bar\kappa_I, \bar\kappa_J, \{t\}\rangle$ is clearly a partition of $\kappa$, where $\bar\kappa_I$ and $\bar\kappa_J$ can also be empty. 
\\
Let $u\in\kappa$.
$X_u$ is nonempty since $u\in X_u$  
holds by claim~\ref{ZetaOfIdempotent} in Lemma~\ref{tuttiINVOLUTIVE}, 
and, being a subset of $X$, $X_u$ is totally ordered by $\leq_u$.
$X_u$ is closed under $\teu$, $\ite{u}$, and $\komp$ by claim~\ref{tau_lemma} in Lemma~\ref{tuttiINVOLUTIVE}, and thus $\negaM{u}{u}=\res{\te}{u}{\nega{u}}\in X_u$.
Since $\tau(x)=u$ holds for $x\in X_u$, therefore
$
\g{x}{u}=\g{x}{\tau(x)}
\overset{L\ref{tuttiINVOLUTIVE}\ref{tnelnagyobb}}{=}
x
$ shows that $u$ is the unit element of $\mathbf X_u$.
For $x\in X_u$, 
\begin{equation}\label{MeGegyeznekPRIME}
\mbox{
$\negaM{u}{x}=\nega{x}$
}
\end{equation}
holds since
$
\negaM{u}{x}
=
\res{\te}{x}{\nega{u}}
=
\res{\te}{x}{(\res{\te}{u}{f})}
\overset{\teALONE\ is\ residuated}{=}
\res{\te}{(\g{x}{u})}{f}
=
\res{\te}{x}{f}
=\nega{x}
$.
Therefore,
$
\negaM{u}{u}
=\nega{u}
$,
and hence
$
\negaM{u}{x}
=
\res{\te}{x}{\negaM{u}{u}}
$.
Summing up, 
\begin{equation}\label{ValoBanAZ}
\mbox{
$\mathbf X_u=(X_u,\leq_u,\teu,\ite{u},u,\nega{u})$ is an involutive FL$_e$-chain.
}
\end{equation}
Next we prove that the $\mathbf X_u$'s satisfy (\ref{KiKiLesz}).
\begin{description}
\item
If $u\in\kappa_o$ then $\mathbf X$ is odd by Table~\ref{MasodiKK}.
By claim~\ref{def A referred} in Lemma~\ref{tuttiINVOLUTIVE} and by Lemma~\ref{inducedDEF}, 
$\mathbf X_u$
is a cancellative odd involutive FL$_e$-chain. 
\item
If $u\in\kappa_J$ then by Table~\ref{MasodiKK},
$u=t$ or $u\in\bar\kappa_J$.
In both cases $\nega{u}$ is not idempotent.

We prove that $X_u$ is discretely ordered by showing
\begin{equation}\label{SZORuVESSZO2}
\g{x}{\nega{u}}=x_{\downarrow_u}<x
\end{equation}
for $x\in X_u$, where ${}_{\downarrow_u}$ denotes the neighbour operation on $X_u$.
It holds true that
\begin{equation}\label{JHGkcllvVjJKlc}
\nega{u}<t.
\end{equation}
Indeed, if $u\in\bar\kappa_J$ then $u>t$ and the involutivity of $\komp$ on $X$ implies $\nega{u}<\nega{t}
\overset{L\ref{tuttiINVOLUTIVE}\ref{MiNdig}}{=}
f
\overset{\mathbf X\ is\ odd\ or\ even}{\leq}
t
$,
whereas if $u=t$ then $\nega{u}=\nega{t}<t$ since the second row of Table~\ref{MasodiKK} shows that $\mathbf X$ is even.
Therefore, by denoting $y=\g{\nega{u}}{\nega{u}}$,
\begin{equation}\label{YiGYu}
y<\nega{u}
\end{equation}
holds since 
$y=\g{\nega{u}}{\nega{u}}
\overset{(\ref{JHGkcllvVjJKlc})}{\leq}
\g{\nega{u}}{t}=\nega{u}$ and equality cannot not hold since 
$\nega{u}$ is not idempotent.
Now $
\g{x}{\nega{u}}
\overset{(\ref{JHGkcllvVjJKlc})}{\leq}
\g{x}{t}=x$ follows.
Assume, by contradiction $\g{x}{\nega{u}}=x$.
It would imply $\g{x}{y}=\g{x}{(\g{\nega{u}}{\nega{u}})}=\g{(\g{x}{\nega{u}})}{\nega{u}}=x$, hence
by claim~\ref{eq_feltukrozes} in Lemma~\ref{tuttiINVOLUTIVE}, 
$\nega{\left(\g{\nega{x}}{\nega{y}}\right)}\geq\g{x}{y}=x$, and in turn
$\g{\nega{x}}{\nega{y}}\leq\nega{x}$ would follow on the one hand.
On the other hand, from
$\nega{y}
\overset{(\ref{YiGYu})}{>}
u\geq t$, by monotonicity $\g{\nega{x}}{\nega{y}}\geq\g{\nega{x}}{t}=\nega{x}$ follows, thus we obtain $\g{\nega{x}}{\nega{y}}=\nega{x}$, and hence $\nega{y}\leq \tau(\nega{x})
\overset{L\ref{tuttiINVOLUTIVE}\ref{NegArAAA}}{=}
u$, a contradiction to (\ref{YiGYu}).
We have just seen that $\g{x}{\nega{u}}<x$.
Next, assume that there exists $z\in X_u$ such that $\g{x}{\nega{u}}<z<x$ holds. 
Since $z<x$, $\g{x}{\nega{u}}\geq\nega{(\g{\nega{z}}{u})}=\nega{\nega{z}}=z$ follows by claim~\ref{eq_feltukrozes_CS} in Lemma~\ref{tuttiINVOLUTIVE}, a contradiction, so (\ref{SZORuVESSZO2}) is confirmed.

Next we show that $\mathbf X_u$ is even: 
$
\negaM{u}{u}
\overset{(\ref{MeGegyeznekPRIME})}{=}
\nega{u}=\g{u}{\nega{u}}
\overset{(\ref{SZORuVESSZO2})}{=}
u_{\downarrow_u}
<u
$.

Finally, we show that $\mathbf X_u$ is cancellative, by showing that 
every element of $X_u$ has inverse, that is,
for $x\in X_u$, $\gteu{x}{\negaM{u}{x}_{\uparrow_u}}=u$.
On the one hand, referring to (\ref{ValoBanAZ}),
$
\gteu{x}{\negaM{u}{x}_{\uparrow_u}}
>_u
\negaM{u}{u}
$
holds by residuation since $X$ is a chain.
It is equivalent to 
$
\gteu{x}{\negaM{u}{x}_{\uparrow_u}}
\geq_u
u
$
since $\mathbf X_u$ is even, yielding
$
\gteu{x}{\negaM{u}{x}_{\uparrow_u}}
\geq
u
$.
On the other hand, 
$
\gteu{x}{\negaM{u}{x}_{\uparrow_u}}
=
\g{x}{\negaM{u}{x}_{\uparrow_u}}
\overset{(\ref{MeGegyeznekPRIME})}{=}
\g{x}{\nega{x}_{\uparrow_u}}
\overset{(\ref{FelNeg_NegLe})}{=}
\g{x}{\nega{(x_{\downarrow_u})}}
\overset{(\ref{SZORuVESSZO2})}{=}
\g{x}{\nega{(\g{x}{\nega{u}})}}
\overset{L\ref{tuttiINVOLUTIVE}\ref{eq_quasi_inverse}}{=}
\g{x}{(\res{\te}{x}{u})}
\overset{\teALONE\ is\ residuated}{\leq}
u
$.

Summing up, $\mathbf X_u$ is a discretely ordered cancellative even involutive FL$_e$-chain.
\item
If $u\in\kappa_I$ then by Table~\ref{MasodiKK},
$u=t$ or $u\in\bar\kappa_I$.
In both cases $\nega{u}$ is idempotent.
We show that $\mathbf X_u$ is even:
first, $\negaM{u}{u}<u$ holds since 
if $u\in\bar\kappa_I$ then $u>t$ and hence $\negaM{u}{u}\overset{(\ref{MeGegyeznekPRIME})}{=}\nega{u}<\nega{t}\overset{L\ref{tuttiINVOLUTIVE}\ref{MiNdig}}{=}
f\overset{\mathbf X\ is\ odd\ or\ even}{\leq}
t<u$,
whereas if $u=t$ then $\mathbf X$ is even by the third row of Table~\ref{MasodiKK} and 
$\negaM{u}{u}\overset{(\ref{MeGegyeznekPRIME})}{=}
\nega{u}=\nega{t}\overset{L\ref{tuttiINVOLUTIVE}\ref{MiNdig}}{=}f
\overset{\mathbf X\ is\ even}{<}
t=u$, and second, by claims~\ref{item_boundary_zeta} and \ref{tau_lemma} in Lemma~\ref{tuttiINVOLUTIVE}, no element $x\in X$ such that $\nega{u}<x<u$ can be in $X_u$ (if $\nega{u}<x<u$ then $\nega{u}<\nega{x} <u$, so we may safely assume $x\geq t$, and then $x\in X_u$  implies $u=\tau(x)\leq x<u$, a contradiction); thus $\negaM{u}{u}=u_{\downarrow_u}$.
Summing up, $\mathbf X_u$ is an even involutive FL$_e$-chain with an idempotent falsum. 
It remains to prove $\gteu{x}{\negaM{u}{x}}=\negaM{u}{u}$ for $x\in X_u$, which follows from
$
\negaM{u}{u}
\overset{(\ref{MeGegyeznekPRIME})}{=}
\nega{u}=\nega{\tau(x)}=\nega{(\res{\te}{x}{x})}
\overset{L\ref{tuttiINVOLUTIVE}\ref{eq_quasi_inverse}}{=}
\g{x}{\nega{x}}
\overset{(\ref{MeGegyeznekPRIME})}{=}
\g{x}{\negaM{u}{x}}=\gteu{x}{\negaM{u}{x}}
$.
\end{description}
Next we prove that $\rho_{u\to v}$ is a homomorphism from the residuated lattice reduct of $\mathbf X_u$  to the residuated lattice reduct of $\mathbf X_v$.
Let $u,v\in X$ be positive idempotent elements of $\mathbf X$ such that $u<v$.
$\rho_{u\to v}$ maps $X_u$ to $X_v$ by claim~\ref{tau_lemma} in Lemma~\ref{tuttiINVOLUTIVE}.
$\rho_{u\to v}$ preserves the ordering since $\teALONE$ is monotone.
$\rho_{u\to v}$ preserves products since $\teALONE$ is associative and $v$ is idempotent. 
\\
To show that $\rho_{u\to v}$ preserves the residual operation we proceed as follows. 
Let $x,y\in X_u$.
It holds true that 
$
\g{\g{v}{\nega{(\g{x}{\nega{y}})}}}{\g{x}{\nega{(\g{v}{y})}}}
\overset{L\ref{tuttiINVOLUTIVE}\ref{eq_quasi_inverse}}{=}
\g{\g{x}{(\res{\te}{x}{y})}}{\g{(\res{\te}{y}{\nega{v})}}{v}}
\leq
f$ since $\teALONE$ is residuated,
hence by adjointness, 
$
\g{v}{\nega{(\g{x}{\nega{y}})}}
\leq
\nega{(\g{x}{\nega{(\g{v}{y})}})}
$
follows.
On the other hand, 
$
\g{v}{\nega{(\g{x}{\nega{y}})}}
\overset{L\ref{tuttiINVOLUTIVE}\ref{eq_feltukrozes}}{\geq}
\g{v}{(\g{\nega{x}}{y})}
=
\g{\nega{x}}{(\g{v}{y})}
$.
Now
$x=\g{t}{x}\leq\g{v}{x}$, and since $v>u=\tau(x)$ and hence $v$ does not stabilize $x$, $x<\g{v}{x}$ follows.
Therefore, 
$
\g{\nega{x}}{(\g{v}{y})}
\overset{L\ref{tuttiINVOLUTIVE}\ref{eq_feltukrozes_CS}}{\geq}
\nega{(\g{(\g{v}{x})}{\nega{(\g{v}{y})}})}
=
\nega{(\g{x}{(\g{v}{\nega{(\g{v}{y})}})})}
\overset{\nega{(\g{v}{y})}\in X_v}{=}
\nega{(\g{x}{\nega{(\g{v}{y})}})}
$.
Summing up, $\g{v}{\nega{(\g{x}{\nega{y}})}}=\nega{(\g{x}{\nega{(\g{v}{y})}})}$.
Therefore, 
$
\rho_{u\to v}(\res{u}{x}{y})
\overset{L\ref{tuttiINVOLUTIVE}\ref{eq_quasi_inverse}}{=}
\g{v}{\negaM{u}{(\gteu{x}{\negaM{u}{y}})}}
\overset{(\ref{MeGegyeznekPRIME})}{=}
\g{v}{\nega{(\g{x}{\nega{y}})}}
=
\nega{(\g{x}{\nega{(\g{v}{y})}})}
\overset{\nega{(\g{v}{y})}\in X_v}{=}
\nega{(\g{x}{(\g{v}{\nega{(\g{v}{y})}}}))}
=
\nega{(\g{(\g{v}{x})}{\nega{(\g{v}{y})}})}
\overset{(\ref{MeGegyeznekPRIME})}{=}
\negaM{v}{(\gtev{(\g{v}{x})}{\negaM{v}{(\g{v}{y})}})}
\overset{L\ref{tuttiINVOLUTIVE}\ref{eq_quasi_inverse}}{=}
\res{v}{(\g{v}{x})}{(\g{v}{y})}
=
\res{v}{\rho_{u\to v}(x)}{\rho_{u\to v}(y)}
$
an we are done.
\\
Finally, $\rho_{u\to v}$ preserves the unit element:
\begin{equation}\label{ksdnjkljKHJLKHJLJK}
\mbox{
$\rho_{u\to v}(u)=\g{v}{u}=v$ 
}
\end{equation}
holds since $v=\g{v}{v}\geq\g{v}{u}\geq\g{v}{t}=v$. 
Summing up, $\rho_{u\to v}$ is a homomorphism from the residuated lattice reduct of $\mathbf X_u$  to the residuated lattice reduct of $\mathbf X_v$.

\medskip\noindent
To conclude the proof of claim~(1) it only remains to prove
\begin{itemize}
\item[(A1):] 
For $u,v,w\in X$ positive idempotent elements such that $u\leq v\leq w$, and 
for $x\in X_u$, 
$
\rho_{u\to u}(x)=\g{u}{x}
\overset{L\ref{tuttiINVOLUTIVE}\ref{tnelnagyobb}}{=}
x
$
and
$
(\rho_{v\to w}\circ\rho_{u\to v})(x)
\overset{(\ref{EzARho})}{=}
\g{w}{(\g{v}{x})}
=
\g{(\g{w}{v})}{x}
\overset{(\ref{ksdnjkljKHJLKHJLJK})}{=}
\g{w}{x}
=
\rho_{u\to w}(x)
$.

\item[(A2):] 
Let $u,v\in X$ positive idempotent elements such that $u<v$.
By claim~\ref{eq_feltukrozes_CS} in Lemma~\ref{tuttiINVOLUTIVE},
$
v=
\g{v}{t}\geq
\g{v}{\nega{u}}\geq
\nega{(\g{\nega{v}}{v})}
=
\tau(v)
=v
$, 
yielding $\rho_{u\to v}(\nega{u})
\overset{(\ref{EzARho})}{=}
\g{v}{\nega{u}}=v\overset{(\ref{ksdnjkljKHJLKHJLJK})}{=}
\g{v}{u}
\overset{(\ref{EzARho})}{=}
\rho_{u\to v}(u)
$.
\end{itemize}

\bigskip
(2): We conclude by a series of claims.

$\leq$ is a total ordering on $X$. 
\begin{itemize}
\item 
Irreflexivity of $<$ is immediate form the irreflexivity of $<_\kappa$.
\item 
Connectedness of $<$ is obvious, too: 
either $\rho_{uv}(x)<_{uv}\rho_{uv}(y)$ or $\rho_{uv}(y)<_{uv}\rho_{uv}(x)$ or $\rho_{uv}(x)=\rho_{uv}(y)$ holds, since $\leq_{uv}$ is total.
The first two cases yield $x\overset{(\ref{RendeZes})}{<}y$ and $y\overset{(\ref{RendeZes})}{<}x$, respectively. If $\rho_{uv}(x)=\rho_{uv}(y)$ then either $u<_\kappa v$ or $v<_\kappa u$ or $u=_\kappa v$ holds since $\leq_\kappa$ is total.
Here $u=_\kappa v$ yields $x\overset{(A1)}{=}
\rho_{uu}(x)=\rho_{vv}(y)\overset{(A1)}{=}
y$,
whereas 
$u<_\kappa v$ and $v<_\kappa u$ yields $x\overset{(\ref{RendeZes})}{<}y$ and $y\overset{(\ref{RendeZes})}{<}x$, respectively.
\item 
$<$ is transitive:
Let $x\in X_u$, $y\in X_v$, $z\in X_w$, and assume $x<y<z$. 
From $x<y$ it follows that $\rho_{uv}(x)\leq_{uv}\rho_{uv}(y)$, hence by preservation of the ordering, $\rho_{uvw}(x)\leq_{uvw}\rho_{uvw}(y)$ holds. 
Analogously we obtain $\rho_{uvw}(y)\leq_{uvw}\rho_{uvw}(z)$, hence $\rho_{uvw}(x)\leq_{uvw}\rho_{uvw}(z)$ follows by the transitivity of $\leq_{uvw}$.
Therefore either $\rho_{uvw}(x)<_{uvw}\rho_{uvw}(z)$ and we conclude $x<z$, or
$\rho_{uvw}(x)=\rho_{uvw}(z)$.
The latter implies $\rho_{uvw}(x)=\rho_{uvw}(y)=\rho_{uvw}(z)$, and also $u<_\kappa v$ and $v<_\kappa w$.
Therefore, by the transitivity of $<_\kappa$, $u<_\kappa w$ follows and thus $x<z$.
\end{itemize}

$(X,\teALONE,t)$ is a commutative monoid. 
\begin{itemize}
\item 
Commutativity of $\teALONE$ is straightforward.
\item 
Let $x\in X_u$, $y\in X_v$, $z\in X_w$.
Then 
$
\g{(\g{x}{y})}{z}=
\g{(\gteM{uv}{\rho_{uv}(x)}{\rho_{uv}(y)})}{z} ,
$
and the latest is equal to
$
\gteM
{uvw}
{\rho_{uvw}\left(\gteM{uv}{\rho_{uv}(x)}{\rho_{uv}(y)}\right)}
{\rho_{uvw}(z)} 
$
since $\gteM{uv}{\rho_{uv}(x)}{\rho_{uv}(y)}\in X_{uv}$.
Since the $\rho$'s preserve products and $\cdot$ is idempotent,
the latest is equal to
$
\gteM
{uvw}
{\left(\gteM{uvw}
{\rho_{uvw}(x)}
{\rho_{uvw}(y)}\right)}
{\rho_{uvw}(z)}
.
$
Analogously follows that 
$\g{x}{(\g{y}{z})}$ is equal to 
$
\gteM
{uvw}
{\rho_{uvw}(x)}
{\left(
\gteM
{uvw}
{\rho_{uvw}(y)}
{\rho_{uvw}(z)}
\right)}
,
$
and hence the associativity of $\teALONE_{uvw}$ implies the associativity of $\teALONE$.
\item 
For $x\in X_u$,
$
\g{t}{x}
\overset{(\ref{EgySzeruTe})}{=}
\gteu{\rho_u(t)}{\rho_u(x)}
\overset{(\ref{P5})}{=}
\gteu{\rho_u(t)}{x}
\overset{(\ref{RHOhomo})}{=}
\gteu{u}{x}
=
x
$
holds using that $u$ is the unit element of $\mathbf X_u$.
\end{itemize}
$\komp$ is an order reversing bijection on $X$.
\begin{itemize}
\item[]
We start with two claims.
\item[-]
(C1)
If $u<_\kappa v$ then 
for $x\in X_u$, 
$\rho_v(\negaM{u}{x})
$ is the inverse of $\rho_v
(x)$ in $\mathbf X_v$.
Indeed, 
$
\gtev{\rho_v(x)}{\rho_v(\negaM{u}{x})}
\overset{(\ref{P5})}{=}
\gtev{\rho_{u\to v}(x)}{\rho_{u\to v}(\negaM{u}{x})}
\overset{(\ref{RHOhomo})}{=}
\rho_{u\to v}(\gteu{x}{\negaM{u}{x}})
$.
Note that $\gteu{x}{\negaM{u}{x}}=\negaM{u}{u}$ holds not only if $u\in\kappa_I$ (see (\ref{KiKiLesz})), but also if 
$u\in\kappa_o\cup\kappa_J$, since due to the cancellativity of $\mathbf X_u$,
$
\gteu{x}{\negaM{u}{x}}
=
\gteu{x}{(\res{u}{x}{\negaM{u}{u}})}
\overset{(\ref{EzAResiduum})}{=}
\gteu{x}{(\gteu{x^{-1_u}}{\negaM{u}{u}})}
=
\gteu{(\gteu{x}{x^{-1_u}})}{\negaM{u}{u}}
=
\gteu{u}{\negaM{u}{u}}
=
\negaM{u}{u}
$.
Therefore, 
$\rho_{u\to v}(\gteu{x}{\negaM{u}{x}})
=
\rho_{u\to v}(\negaM{u}{u})
$,
which is equal to $\rho_{u\to v}(u)
\overset{(\ref{RHOhomo})}{=}
v$
if $u\in\kappa_o$ (since then $\negaM{u}{u}=u$), and
is equal to
$
\overset{(A2)}{=}
\rho_{u\to v}(u)
\overset{(\ref{RHOhomo})}{=}
v
$
if $u\notin\kappa_o$.
\item[-]
(C2)
If $u<_\kappa v$ then for $x\in X_u$, $\negaM{v}{\rho_{v}(x)}=\rho_v(\negaM{u}{x})_{\downarrow_v}<\rho_v(\negaM{u}{x})$.
First we prove  
$
\gtev{\negaM{v}{\rho_{v}(x)}}{\rho_v(x)}
=
v_{\downarrow_v}
$:
if $v\in\kappa_J$ then
due to the cancellativity of $\mathbf X_v$ (see~(\ref{KiKiLesz})),
$
\gtev{\negaM{v}{\rho_{v}(x)}}{\rho_v(x)}
=
\gtev{\rho_v(x)}{(\res{v}{\rho_{v}(x)}{\negaM{v}{v}})}
\overset{(\ref{EzAResiduum})}{=}
\gtev{\rho_v(x)}{(\gtev{\rho_v(x)^{-1}}{\negaM{v}{v}})}
=
\gtev{(\gtev{\rho_v(x)}{\rho_v(x)^{-1}})}{\negaM{v}{v}}
=\gtev{v}{\negaM{v}{v}}=\negaM{v}{v}
\overset{\mathbf X_v \ is \ even,\ see~(\ref{KiKiLesz})}{=}
v_{\downarrow_v}
$,
whereas if $v\in\kappa_I$ then 
$\gtev{\negaM{v}{\rho_{v}(x)}}{\rho_v(x)}=\negaM{v}{v}=v_{\downarrow_v}$
holds by (\ref{KiKiLesz}), and we are done. 
Now, multiplying both sides by $\rho_v(\negaM{u}{x})$ yields
$\negaM{v}{\rho_{v}(x)}=\gtev{v_{\downarrow_v}}{\rho_v(\negaM{u}{x})}$ using (C1).
If $v\in\kappa_J$ then by (\ref{KiKiLesz}) we can apply claim~\ref{eGGyelLejjebB} in Lemma~\ref{tuttiINVOLUTIVE} resulting in
$
\rho_v(\negaM{u}{x})_{\downarrow_v}
=
\gtev{v_{\downarrow_v}}{\rho_v(\negaM{u}{x})}
<
\gtev{v}{\rho_v(\negaM{u}{x})}
=
\rho_v(\negaM{u}{x})
$, so we are done.
If $v\in\kappa_I$ then by (\ref{KiKiLesz}), $\mathbf X_v$ is an even involutive FL$_e$-chain with an idempotent falsum. 
Therefore, $\mathbf X_v=Sp(\mathbf A,\mathbf H)$ by Theorem~\ref{sPliT}, and
since $\rho_v(\negaM{u}{x})$ is invertible by (C1), 
$\rho_v(\negaM{u}{x})$ is an element of $\mathbf H$ by claim~(\ref{UNICITYofH}) in Theorem~\ref{sPliT}.
On the other hand, $v_{\downarrow_v}$ is clearly in $ H^\bullet$.
Therefore, 
$
\gtev{v_{\downarrow_v}}{\rho_v(\negaM{u}{x})}
\overset{(\ref{ProdSplit})}{=}
(\gtev{v}{\rho_v(\negaM{u}{x})})_{\downarrow_v}
=
\rho_v(\negaM{u}{x})_{\downarrow_v}
\overset{\rho_v(\negaM{u}{x})\in H}{<}
\rho_v(\negaM{u}{x})
$
holds.
\end{itemize}
\begin{itemize}
\item 
Since for $u\in \kappa$,  $\kompM{u}$ is of order $2$, so is $\komp$ over $X$ by (\ref{IgyKellEztCsinalni}), hence $\komp$ is a bijection.
It remains to prove that $\komp$ is order reversing.
Let
$X_u\ni x\leq y\in X_v$.
If $u=v$ then $x\leq_u y$ holds by (\ref{RendeZes}), hence $\negaM{u}{y}\leq_u \negaM{u}{x}$ follows since $\mathbf X_u$ is involutive, thus $\nega{y}\leq\nega{x}$ holds by (\ref{RendeZes}) and (A1). 
If $u<_\kappa v$ then 
$x\leq y$ implies
$\rho_v(x)\leq_{uv} y$ by (\ref{RendeZes}), hence $\negaM{v}{y}\leq_v\negaM{v}{\rho_{v}(x)}$ follows since $\mathbf X_v$ is involutive.
Therefore, 
$
\negaM{v}{y}
\leq_v
\negaM{v}{\rho_{v}(x)}
\overset{(C2)}{<_v}
\rho_v(\negaM{u}{x})
$,
and thus
$\negaM{v}{y}<_v\rho_v(\negaM{u}{x})$  implies 
$\negaM{v}{y}\leq\negaM{u}{x}$
by (\ref{RendeZes}),
yielding
$\nega{y}\leq\nega{x}$
by (\ref{IgyKellEztCsinalni}). 
If $u>_\kappa v$ then $x\leq y$ implies $x<_u\rho_u(y)$ by (\ref{RendeZes}),
which is equivalent to 
$\negaM{u}{\rho_u(y)}<_u\negaM{u}{x}$
since $\mathbf X_u$ is involutive.
By (C2), $\rho_u(\negaM{v}{y})\leq_u\negaM{u}{x}$ follows, which yields
$\negaM{v}{y}\leq\negaM{u}{x}$
by (\ref{RendeZes}), and hence $\nega{y}\leq\nega{x}$ follows by (\ref{IgyKellEztCsinalni}).
\end{itemize}

$\nega{t}$ is a dualizing element of $(X,\leq,\te)$.
\begin{itemize}
\item
It suffices to prove that for $x\in X$, there exists $\res{\te}{x}{\nega{t}}$ and 
\begin{equation}\label{kjkjKKGHKHKHljklkLJJKLd}
\res{\te}{x}{\nega{t}}=\nega{x}
,
\end{equation}
since the involutivity of $\komp$ then ensures $\res{\te}{(\res{\te}{x}{\nega{t}})}{\nega{t}}=x$.
Equivalently, that for $x,y\in X$, 
$
\g{x}{y}\leq \nega{t} \mbox{ \ if and only if \ } x\leq\nega{y}
$.
Let $x\in X_u$, $y\in X_v$.
Since $\teALONE$ is commutative and since $\komp$ is an order reversing bijection, we may safely assume 
$u\leq_\kappa v$.
Since
$
\nega{t}
\overset{(\ref{IgyKellEztCsinalni})}{=}
\negaM{t}{t}
\in X_t$
and $t\leq_\kappa v$,
by (\ref{EgySzeruTe}) $\g{x}{y}\leq \negaM{t}{t}$ is equivalent to
$
\gteM{v}{\rho_{v}(x)}{\rho_{v}(y)}
\leq
\negaM{t}{t}
.
$
Since $\gteM{v}{\rho_{v}(x)}{\rho_{v}(y)}\in X_{v}$,
$\gteM{v}{\rho_{v}(x)}{\rho_{v}(y)}\leq \negaM{t}{t}$ is equivalent to 
$\gteM{v}{\rho_{v}(x)}{\rho_{v}(y)}\leq_{v}\negaM{v}{v}$:
indeed, if $t=v$ then $\gteM{v}{\rho_{v}(x)}{\rho_{v}(y)}\leq \negaM{t}{t}$ is equivalent to 
$\gteM{v}{\rho_{v}(x)}{\rho_{v}(y)}
\overset{(\ref{RendeZes})}{\leq_{v}}
\negaM{t}{t}
=
\negaM{v}{v}
$,
whereas 
if $t<_\kappa v$ then $v\notin\kappa_o$ and $\gteM{v}{\rho_{v}(x)}{\rho_{v}(y)}\leq \negaM{t}{t}$ is equivalent to 
$\gteM{v}{\rho_{v}(x)}{\rho_{v}(y)}
\overset{(\ref{RendeZes})}{<_{v}}
\rho_v(\negaM{t}{t})
\overset{(\ref{P5})}{=}
\rho_{t\to v}(\negaM{t}{t})
\overset{(A2)}{=}
\rho_{t\to v}(t)
\overset{(\ref{RHOhomo})}{=}
v
$,
that is, equivalent to
$\gteM{v}{\rho_{v}(x)}{\rho_{v}(y)}
\overset{v\notin\kappa_o,\,(\ref{KiKiLesz})}{\leq_{v}}
v_{\downarrow_v}
\overset{(\ref{KiKiLesz})}{=}
\negaM{v}{v}$.
Since $\mathbf X_v$ is residuated, $\gteM{v}{\rho_{v}(x)}{\rho_{v}(y)}\leq_{v}\negaM{v}{v}$ is equivalent to 
$\rho_{v}(x)\leq_v \res{v}{y}{\negaM{v}{v}}=\negaM{v}{y}$, and by (\ref{RendeZes}) it is equivalent to $x\leq\negaM{v}{y}\overset{(\ref{IgyKellEztCsinalni})}{=}\nega{y}$.
\end{itemize}

Summing up, we have shown that $(X,\leq)$ is a chain and $(X,\teALONE,t)$ is a commutative monoid.
Since $\nega{t}$ is a dualizing element of $(X,\leq,\te)$, 
Lemma~\ref{SimpLER} shows that $(X,\leq,\te)$ is residuated
and 
$\res{\te}{x}{y}=\nega{\left(\g{x}{\nega{y}}\right)}$. 
Since $\komp$ (given in (\ref{IgyKellEztCsinalni}))
coincides with the residual complement of $\mathbf X$ (given by $\res{\te}{x}{\nega{t}}$, see (\ref{kjkjKKGHKHKHljklkLJJKLd})), 
and since $\komp$ is an order reserving involution on $X$, it follows that $\mathbf X$ is involutive.
Finally, 
by (\ref{KiKiLesz}),
$\mathbf X_t$ and hence also $\mathbf X$ is odd if $t\in\kappa_o$, $\mathbf X$ is 
even with a non-idempotent falsum if $t\in\kappa_J$, and $\mathbf X$ is even with an idempotent falsum if $t\in\kappa_I$.

\bigskip
(3):
Let $\mathcal A=\langle \mathbf X_u, \rho_{u\to v} \rangle_{\boldsymbol\kappa}$ be a bunch of layer algebras, and adapt the 
notations in Definition~\ref{DEFbunch2} and the definitions in claim~(2).
To see that the universe of the $u^{\rm th}$ layer algebra of $\mathcal A_{\left(\mathcal X_{\mathcal A}\right)}$ is equal to $X_u$ which is the universe  of the $u^{\rm th}$ layer algebra $\mathbf X_u$ of $\mathcal A$,
we need to prove that for $x\in X$ (where $X$ is given in (\ref{EZazX})), 
$\res{\te}{x}{x}=u$ if and only if $x$ is in $X_u$, the universe of the $u^{th}$-layer algebra:
$x\in X$ implies that $x\in X_v$ for some $v\in\kappa$. 
Now
$
u
=
\res{\te}{x}{x}
\overset{(\ref{IgYaReSi})}{=}
\nega{(\g{x}{\nega{x}})}
\overset{(\ref{IgyKellEztCsinalni})\,(\ref{EgySzeruTe})}{=}
\nega{(\gtev{x}{\negaM{v}{x}})}
\overset{(\ref{IgyKellEztCsinalni})}{=}
\negaM{v}{(\gtev{x}{\negaM{v}{x}})}$,
where the last equality holds since $X_v$ is closed under $\kompM{v}$ and $\tev$ and thus $\gtev{x}{\negaM{v}{x}}$ is in $X_v$.
Therefore, $u\in X_v$ follows.
Hence $u=v$ must hold since $u\in X_u$ and $X$ is the disjoint union of the $X_u$'s by (\ref{EZazX}).
 The definition of the ordering relation in claim~(1) and (\ref{RendeZes}) show that the ordering of the $u^{\rm th}$ layer algebra of $\mathcal A_{\left(\mathcal X_{\mathcal A}\right)}$ is the same as the ordering $\leq_u$ of the $u^{\rm th}$ layer algebra $\mathbf X_u$ of $\mathcal A$.
Likewise show (\ref{EgySzeruTe}) and the definition of the monoidal operation in claim~(1) that the monoidal operation of the $u^{\rm th}$ layer algebra of $\mathcal A_{\left(\mathcal X_{\mathcal A}\right)}$ is the same as the monoidal operation $\teu$ of the $u^{\rm th}$ layer algebra $\mathbf X_u$ of $\mathcal A$.
Since both the $u^{\rm th}$ layer algebra of $\mathcal A_{\left(\mathcal X_{\mathcal A}\right)}$ and the $u^{\rm th}$ layer algebra $\mathbf X_u$ of $\mathcal A$ are involutive FL$_e$-chains over the same universe, equipped with the same ordering relation and the same product operation, their residual operations -- which are uniquely determined by these -- must coincide, too. 
The unit element of $\mathbf X_u$ is $u$, therefore $u$ acts as the unit element of the $u^{\rm th}$ layer algebra of $\mathcal A_{\left(\mathcal X_{\mathcal A}\right)}$ (which is over the same set $X_u$ and is equipped with the same monoidal operation, as we have seen above).
Finally, the falsum constant of $\mathbf X_u$ is $\negaM{u}{u}$. On the other hand, the falsum constant of the $u^{\rm th}$ layer algebra of $\mathcal A_{\left(\mathcal X_{\mathcal A}\right)}$ is 
$
\res{\te}{u}{\nega{u}}
\overset{(\ref{IgyKellEztCsinalni})}{=}
\res{\te}{u}{\negaM{u}{u}}
=
\res{u}{u}{\negaM{u}{u}}
=
\negaM{u}{u}
$,
where the last equality holds by residuation since $u$ is the unit element over $X_u$.
Summing up, $\mathcal A_{\left(\mathcal X_{\mathcal A}\right)}=\mathcal A$.

\smallskip
Let $\mathbf X=(X,\leq,\teALONE,\ite{\te},t,f)$ be an odd or even involutive FL$_e$-chain, and adapt the definitions in claim~(1).
We have seen in the proof of claim~(1) that the $X_u$'s are nonempty. It is straightforward that they are disjoint, too, and their union, which is the universe of  $\mathcal X_{\mathcal A_\mathbf X}$, see (\ref{EZazX}), is equal to $X$.
To prove that the ordering of $\mathbf X$ and of $\mathcal X_{\mathcal A_\mathbf X}$ coincide, first we prove the following statement.
If $v< u$ and $x\in X_v$ then
\begin{equation}\label{HgHJJhGkhj}
\rho_u(x)=\min\{z\in X_u: z\geq x\}
>x
.
\end{equation}
Indeed, 
$\g{u}{x}\in X_u$ by claim~\ref{tau_lemma} in Lemma~\ref{tuttiINVOLUTIVE}, and
$\g{u}{x}\geq x$ holds since $\g{u}{x}\geq\g{t}{x}=x$.
By contradiction, assume that there exists $z\in X_u$
such that 
$x<z<\g{u}{x}$.
Since $\leq$ is total, by adjointness $\nega{\nega{z}}<\g{u}{x}$ is equivalent to
$f<\g{\nega{z}}{(\g{u}{x})}=\g{(\g{\nega{z}}{u})}{x}
\overset{\nega{z}\in X_u}{=}
\g{\nega{z}}{x}$.
Finally, since $\leq$ is total, by adjointness $f<\g{\nega{z}}{x}$ is equivalent to
$x>\nega{\nega{z}}=z$, a contradiction.
Referring to (\ref{HgHJJhGkhj}) a moment's reflection shows that the ordering $\leq$ of $X$ coincides with the ordering of $\mathcal X_{\mathcal A_\mathbf X}$ given by (\ref{RendeZes}). 
Since for $x\in X_u$ and $y\in X_v$,
$
\g{x}{y}
\overset{L\ref{tuttiINVOLUTIVE}\ref{tnelnagyobb}}{=}
\g{(\g{\g{u}{u}}{x})}{(\g{\g{v}{v}}{y})}=
\g{(\g{(\g{u}{v})}{x})}{(\g{(\g{u}{v})}{y})}
=
\g{\rho_{uv}(x)}{\rho_{uv}(y)}
=\gteM{uv}{\rho_{uv}(x)}{\rho_{uv}(y)}
$,
the monoidal operation $\teALONE$ (of $X$) coincides with $\teALONE$ given in (\ref{EgySzeruTe}).
Since both $\mathbf X$ and $\mathcal X_{\mathcal A_\mathbf X}$ are involutive FL$_e$-chains over the same universe, equipped with the same ordering relation and the same product operation, their residual operations -- which are uniquely determined by these -- must coincide, too. 
By (\ref{EQezLeszAgUUUUU}), the unit element of $\mathbf X_t$ is $t$, and hence the unit element of 
$\mathcal X_{\mathcal A_\mathbf X}$ is also $t$.
Finally, the falsum constant of $\mathbf X$ is $f$, hence the falsum constant of $\mathbf X_t$ is $\negaM{t}{t}=\res{\te}{t}{\nega{t}}\overset{L\ref{tuttiINVOLUTIVE}\ref{MiNdig}}{=}
\res{\te}{t}{f}\overset{L\ref{tuttiINVOLUTIVE}\ref{MiNdig}}{=}f$.
Therefore, the falsum constant of $\mathcal X_{\mathcal A_\mathbf X}$ is also $\nega{t}=f$.
Summing up, $\mathcal X_{\mathcal A_\mathbf X}=\mathbf X$.
\end{proof}

\begin{remark}
Two elements of the construction of $\mathcal X_{\mathcal A}$ in Lemma~\ref{BUNCHalg_X}/(\ref{dfgdkjkjHKJhKJNHG}) are reminiscent to the P\l{}onka sum construction \cite{plonka}.
In the P\l{}onka sum construction the universe of the algebra is the disjoint union of the universes of the algebras in the direct system. So is in our construction, c.f.\,(\ref{EZazX}).
In the P\l{}onka sum construction every binary operator $\circ\in\{\teALONE, \ite{\te},\wedge,\vee\}$ would be given in terms of the direct system as follows: for $x\in X_u$, $y\in X_v$, 
\begin{equation}\label{plonkaWAY}
x\circ y
=
\rho_{u\to uv}(x)\circ_{uv}\rho_{u\to uv}(y)
\end{equation}
In our construction it is the case only for $\circ\in\{\teALONE\}$, c.f.\,(\ref{EgySzeruTe}). 
As for $\circ\in\{\ite{\te}\}$,
if $\ite{\te}$ were given by (\ref{plonkaWAY}) then 
for any positive idempotent elements $u<v$, it would hold true that
$
\res{\te}{v}{u}
=
\res{\te}{u}{v}
$
since
$
\res{\te}{v}{u}
\overset{(\ref{plonkaWAY})}{=}
\res{\te_{v}}{v}{v}
\overset{(\ref{plonkaWAY})}{=}
\res{\te}{u}{v}
$.
But in an involutive FL$_e$-chain 
it
would be equivalent to
$\g{v}{\nega{u}}=\g{u}{\nega{v}}$
by Lemma~\ref{tuttiINVOLUTIVE}/\ref{eq_quasi_inverse},
a contradiction, since
$\g{v}{\nega{u}}=v$
and
$\g{u}{\nega{v}}=\nega{v}$
(as shown by
$
v
\overset{L\ref{tuttiINVOLUTIVE}\ref{ZetaOfIdempotent}}{=}
\tau(v)
\overset{L\ref{tuttiINVOLUTIVE}\ref{eq_quasi_inverse}}{=}
\nega{(\g{v}{\nega{v}})}
\overset{L\ref{tuttiINVOLUTIVE}\ref{eq_feltukrozes_CS}}{\leq}
\g{v}{\nega{u}}\leq\g{v}{t}=v
$
and
$
\nega{v}
=
\g{t}{\nega{v}}
\leq
\g{u}{\nega{v}}
\leq
\g{v}{\nega{v}}
\overset{L\ref{tuttiINVOLUTIVE}\ref{eq_quasi_inverse}}{=}
\nega{\tau(v)}
\overset{L\ref{tuttiINVOLUTIVE}\ref{ZetaOfIdempotent}}{=}
\nega{v}
$, respectively).
Finally, notice that P\l{}onka's definition doesn't work for $\circ\in\{\wedge,\vee\}$ since (\ref{plonkaWAY}) would render different elements of the universe equal.
Indeed, if $u<v$ are positive idempotent elements, $x\in X_u$ and $\rho_{u\to v}(x)=y\in X_v$ then according to (\ref{plonkaWAY}), 
$x\wedge y=\rho_{u\to v}(x)\wedge_v y=y=\rho_{u\to v}(x)\vee_v y=x\vee y$ yielding $x=y$,
a contradiction since $x$ and $y$ are in different layers. 
Even though our definition is applied in our specific (residuated lattice) setting only,
we have introduced in (\ref{RendeZes}) {\em a general way of ordering P\l{}onka sums}, call it the {\em directed lexicographic order}.
Making use of the (slight) similarity between our construction and that of P\l{}onka's would have made the related proof only a few lines shorter. Therefore, we have included those few lines, too, to make the treatment self-contained. 
\end{remark}

Bunches of layer algebras will, in turn, be represented by bunches of layer groups in Section~\ref{DEFbunch}. To that end, in the following two sections the necessary auxiliary results (along with some which are more general than what we actually need in this paper) will be developed.

\section{Even involutive FL$_e$-chains with non-idempotent falsum constants -- Changing the falsum constant}\label{WaRMuPEVEN_NID}

An abelian $o$-group is called {\em discrete}, if there exists the smallest positive element greater than the unit element.
It is equivalent to saying that the abelian $o$-group is discretely ordered, or that its induced 
cancellative odd involutive FL$_e$-algebra is discretely ordered.
In the following definition the residuated chain reduct of the FL$_e$-chain is left unchanged, and only the falsum constant, and thus also the residual complement are changed slightly.
\begin{definition}\rm
For 
a discretely ordered, cancellative, odd, involutive FL$_e$-chain $\mathbf X=(X, \leq,\teALONE, \ite{\te}, t, t)$, 
let
$$
\mathbf X_{\contour{black}{$_\downarrow$}}
=
(X, \leq,  \ite{\te}, t, t_\downarrow).
$$
For a 
discretely ordered,
cancellative, even, involutive FL$_e$-chain
$\mathbf Y=(Y, \leq,\teALONE, \ite{\te}, t, t_\downarrow)$, 
let
$$\mathbf Y_{\contour{black}{$_\uparrow$}}=
(Y, \leq,\teALONE, \ite{\te}, t, t).$$
\end{definition}

\begin{lemma}\label{PROdownshift}
Let $\mathbf X$ and $\mathbf Y$ be discretely ordered, cancellative, involutive FL$_e$-chains, $\mathbf X$ is odd, $\mathbf Y$ is even.
Then
\begin{enumerate}
\item 
$\mathbf X_{\contour{black}{$_\downarrow$}}$ is a discretely ordered, cancellative, even, involutive FL$_e$-chain,
\item 
$\mathbf Y_{\contour{black}{$_\uparrow$}}$ is a discretely ordered, cancellative, odd, involutive FL$_e$-chain,
\item
$\mathbf X_{\contour{black}{$_{\downarrow\uparrow}$}}=\mathbf X$
and
$\mathbf Y_{\contour{black}{$_{\uparrow\downarrow}$}}=\mathbf Y$.
\end{enumerate}
\end{lemma}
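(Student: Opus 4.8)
The plan is to exploit Lemma~\ref{SimpLER} together with the fact that $\mathbf X$ and $\mathbf Y$ share their $(X,\leq,\te)$-reduct with, respectively, $\mathbf X_{\contour{black}{$\downarrow$}}$ and $\mathbf Y_{\contour{black}{$\uparrow$}}$; only the falsum constant (and hence $\komp$) is being moved by one step. The key observation is that in a cancellative, odd, discretely ordered FL$_e$-chain the element $t_\downarrow$ is a dualizing element of the semigroup reduct $(X,\leq,\te)$: by claim~(\ref{eGGyelLejjebB}) of Lemma~\ref{tuttiINVOLUTIVE} we have $\g{z}{t_\downarrow}=z_\downarrow$ for all $z$, so the map $z\mapsto\g{z}{t_\downarrow}$ is exactly the bijection ${}_\downarrow$, whose inverse is ${}_\uparrow$; thus $\res{\te}{x}{t_\downarrow}$ exists (it equals $\nega{x}_\downarrow$, or more directly $x_\uparrow\te\nega x$ computed in the group, but the clean way is to note $\res{\te}{x}{t_\downarrow}=(\res{\te}{x}{t})_\downarrow=\nega{x}{}_\downarrow$ using $\te$-residuation and \eqref{FelNeg_NegLe}), and applying the step-down bijection twice returns $x$, giving property (ii) of a dualizing element. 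Symmetrically, in a cancellative, even, discretely ordered FL$_e$-chain $\mathbf Y$ the original unit $t$ is a dualizing element: since $\mathbf Y$ is even, $t$ is the unique cover of $f=t_\downarrow$, and cancellativity plus discreteness force $\g{z}{t_\uparrow}=z_\uparrow$ (an argument entirely parallel to claim~(\ref{eGGyelLejjebB}), which one should either invoke via part (2) once it is known, or redo directly), so $z\mapsto\g{z}{t_\uparrow}$ realizes the bijection ${}_\uparrow$ whose square-with-${}_\downarrow$ is the identity.

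With a dualizing element in hand, Lemma~\ref{SimpLER} immediately yields that the $(X,\leq,\te)$-reduct is residuated with the displayed formula for the residual, so $\mathbf X_{\contour{black}{$\downarrow$}}$ and $\mathbf Y_{\contour{black}{$\uparrow$}}$ are genuine FL$_e$-chains. They are cancellative because the semigroup reduct is unchanged and cancellativity is a property of that reduct alone; likewise discreteness is a property of $(X,\leq)$ and is inherited verbatim. It remains to check the involutivity and the parity. Involutivity is precisely clause (ii) of the dualizing-element condition rewritten via $\nega{x}=\res{\te}{x}{f}$: $\nega{(\nega{x})}=\res{\te}{(\res{\te}{x}{f})}{f}=x$. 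For the parity of $\mathbf X_{\contour{black}{$\downarrow$}}$: here $f=t_\downarrow$, so by definition $f<t$; we must show $t$ is the \emph{unique} cover of $f=t_\downarrow$, which holds because the chain is discretely ordered (so $t_\downarrow$ has a unique cover, and that cover is $t$ by definition of ${}_\uparrow$), hence $\mathbf X_{\contour{black}{$\downarrow$}}$ is even. For the parity of $\mathbf Y_{\contour{black}{$\uparrow$}}$: here the new falsum equals the new truth, both being $t$, so $\nega{t}=\res{\te}{t}{t}=\tau(t)=t$ by claim~(\ref{ZetaOfIdempotent}) applied in the residuated reduct — equivalently $\nega t=t$ is immediate from \eqref{EzAzInverz} — so $\mathbf Y_{\contour{black}{$\uparrow$}}$ is odd. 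This proves parts (1) and (2).

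Part (3) is then essentially bookkeeping: $\left(\mathbf X_{\contour{black}{$\downarrow$}}\right)_{\contour{black}{$\uparrow$}}$ and $\mathbf X$ have the same $(X,\leq,\te)$-reduct and the same unit $t$; the falsum of $\left(\mathbf X_{\contour{black}{$\downarrow$}}\right)_{\contour{black}{$\uparrow$}}$ is $t$ by the definition of the ${}_{\contour{black}{$\uparrow$}}$-operation applied to the even chain $\mathbf X_{\contour{black}{$\downarrow$}}$, matching the falsum $t$ of the odd chain $\mathbf X$; and since the residual operation of an FL$_e$-chain is uniquely determined by its residuated monoid reduct, the two algebras coincide in every operation. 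The identity $\left(\mathbf Y_{\contour{black}{$\uparrow$}}\right)_{\contour{black}{$\downarrow$}}=\mathbf Y$ is symmetric, using that $\left(\mathbf Y_{\contour{black}{$\uparrow$}}\right)_{\contour{black}{$\downarrow$}}$ has falsum $t_\downarrow$ by definition, matching $\mathbf Y$'s falsum $t_\downarrow$.

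The main obstacle is the verification, for the even input $\mathbf Y$, that $t=\,$(its $t_\downarrow$)$_\uparrow$ behaves as a dualizing element — i.e. establishing $\g{z}{t_\uparrow}=z_\uparrow$ in the even, cancellative, discretely ordered setting. The honest route is to prove this directly rather than cite claim~(\ref{eGGyelLejjebB}) (which is phrased for the odd \emph{or} even cancellative chain and may be applied here, but one should be careful that the ``$t$'' there is the unit of $\mathbf Y$): mimic the proof of claim~(\ref{eGGyelLejjebB}), using that in $\mathbf Y$ one has $\g{t_\uparrow}{t_\downarrow}=t$ (shown inside that very proof) together with cancellativity to pin down the unique cover. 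Everything else is routine transfer of properties through a reduct that is literally unchanged.
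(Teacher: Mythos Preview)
Your proposal is correct in outline, and for part~(1) it lands on exactly the same computation as the paper: $\res{\te}{x}{t_\downarrow}=(\nega{x})_\downarrow$, whence involutivity follows from~(\ref{FelNeg_NegLe}). One simplification: invoking Lemma~\ref{SimpLER} to establish residuation is unnecessary, because $\mathbf X_{\contour{black}{$\downarrow$}}$ and $\mathbf X$ share the \emph{same} residual operation $\ite{}$ by definition---only the falsum constant changes---so residuation is inherited verbatim and the only thing requiring proof is that the new residual complement is an involution.

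For part~(2) your detour through $\g{z}{t_\uparrow}=z_\uparrow$ works but is more laborious than needed, and as written it has a gap: you never spell out how this identity yields that $t$ is dualizing (the missing link is that $\g{x}{\negaM{\bullet}{x}}=f=t_\downarrow$ by claim~(\ref{canc_Fremegy}), so multiplying by $t_\uparrow$ gives $\g{x}{(\negaM{\bullet}{x})_\uparrow}=t$, whence $\res{}{x}{t}=(\negaM{\bullet}{x})_\uparrow=\negaM{\bullet}{(x_\downarrow)}$). The paper sidesteps all of this: since $\mathbf Y$ is already involutive with complement $\kompM{\bullet}$, claim~(\ref{eq_quasi_inverse}) of Lemma~\ref{tuttiINVOLUTIVE} gives directly
\[
\res{}{x}{t}=\negaM{\bullet}{(\g{x}{\negaM{\bullet}{t}})}=\negaM{\bullet}{(\g{x}{f})}=\negaM{\bullet}{(\g{x}{t_\downarrow})}\overset{(\ref{eGGyelLejjebB})}{=}\negaM{\bullet}{(x_\downarrow)},
\]
and then~(\ref{FelNeg_NegLe}) finishes exactly as in part~(1). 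No new identity for $t_\uparrow$ is needed. Your part~(3) is fine and matches the paper's ``obvious'' dismissal.
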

\begin{proof}
The rest being obvious we prove the involutivity of $\mathbf X_{\contour{black}{$_\downarrow$}}$ and $\mathbf Y_{\contour{black}{$_\uparrow$}}$.
(1): To see that $\mathbf X_{\contour{black}{$_\downarrow$}}$ is involutive, 
denote the residual complement operation of $\mathbf X$ by $\komp$, and the inverse operation of $\lambda(\mathbf X)$ by ${}^{-1}$. 
Then
$
\res{}{x}{t_\downarrow}
\overset{(\ref{EzAResiduum})}{=}
\g{x^{-1}}{t_\downarrow}
\overset{(\ref{EzAzInverz})}{=}
\g{\nega{x}}{t_\downarrow}
\overset{L\ref{tuttiINVOLUTIVE}\ref{eGGyelLejjebB}}{=}
(\nega{x})_\downarrow
$,
and hence (\ref{FelNeg_NegLe}) confirms involutivity.
(2):
To see that $\mathbf Y_{\contour{black}{$_\uparrow$}}$ is involutive, 
denote the residual complement operation of $\mathbf Y$ by $\kompM{\bullet}$. 
Then
$
\res{}{y}{t}
\overset{L\ref{tuttiINVOLUTIVE}\ref{eq_quasi_inverse}}{=}
\negaM{\bullet}{(\g{y}{\negaM{\bullet}{t}})}
\overset{L\ref{tuttiINVOLUTIVE}\ref{MiNdig}}{=}
\negaM{\bullet}{(\g{y}{f})}
\overset{\mathbf Y \ is \ even}{=}
\negaM{\bullet}{(\g{y}{t_\downarrow})}
\overset{L\ref{tuttiINVOLUTIVE}\ref{eGGyelLejjebB}}{=}
\negaM{\bullet}{{y_\downarrow}}$ holds,
and hence (\ref{FelNeg_NegLe}) confirms involutivity.
\end{proof}

\section{Even involutive FL$_e$-algebras with idempotent falsum constants -- subgroup splits of odd involutive FL$_e$-algebras}\label{WaRMuPEVEN_ID}

As an investigation into the structure of residuated lattices which are not necessarily totally ordered, a one-to-one correspondence between pairs of an odd involutive FL$_e$-algebra and a cancellative subalgebra of it, and even involutive FL$_e$-algebras where the residual complement of the unit element is idempotent will be proved in this section.

In groups the unit element has two different roles to play. It serves as the unit element of the multiplication, and also the product of any element by its inverse is equal to it.
We shall replace the unit element of any odd involutive FL$_e$-algebra by two elements, 
each will inherit a single role. This way both the unit element itself and its two roles will be \lq\lq split\rq\rq\  into two. 
Some anticipatory examples: the unit-split algebra of the one-element group will be the two-element Boolean algebra, and 
the unit-split algebra of the Sugihara lattice $\mathbf S_{\mathbb Z}$ will be {\em the} Sugihara lattice $\mathbf S_{\mathbb Z^\star}$, 
(both named by Meyer, see \cite{AB}[p.~414]).
Moreover, not only the unit element, but in fact any subgroup of an odd involutive FL$_e$-algebra can be \lq\lq split\rq\rq, that is, each element of the subgroup of an odd involutive FL$_e$-algebra will be replaced by two elements.
We prove that by splitting (and thus \lq\lq doubling\rq\rq) a subgroup of any odd involutive FL$_e$-algebra we obtain an even involutive FL$_e$-algebra with an idempotent falsum constant, and each even involutive FL$_e$-algebra with an idempotent falsum constant arises this way in a unique manner (Theorem~\ref{sPliT}).

Such a splitting method has roots in the literature. In \cite{casari} an analogous idea has been applied to abelian pregroups. Our method here is both more general and more specific: the algebras that we split are more general than that of \cite{casari}, however, in our approach the (second) algebra we use in the splitting procedure is the two element Boolean algebra (each element is split into two elements only) whereas in \cite{casari} it is taken from the more general class of zero-pregroups.
The notion of the melting structure (of a pregroup) in \cite{casari} is analogous to the image (of the even involutive FL$_e$-algebra with idempotent falsum constant) under what we call its canonical homomorphism.
The splitting procedure in \cite{casari} is precursor also for the partial lex-product construction of \cite{JS_Hahn}, which when further generalized to partial sublex-products has been capable to describe the structure of all odd, involutive FL$_e$-chains which have only finitely many idempotent elements \cite{JS_Hahn,Jenei_Hahn_err}.

\begin{definition}\label{SubgroupSplitREV}
Let $\mathbf Y=(Y,\wedge, \vee,\star,\ite{\star},t,f)$ be an even FL$_e$-algebra with an idempotent falsum constant. Denote its residual complement by $\kompM{\star}$.
Let
$$
\mbox{
$\pi_1(\mathbf Y)=\mathbf X=(X,\dot \wedge, \dot \vee,\teALONE,\ite{\te},t,t)$
and 
$\pi_2(\mathbf Y)=\mathbf H=(H,\dot \wedge, \dot \vee,\teALONE,\ite{\te},t,t)$
}
$$
be given by
\begin{equation}\label{EQmiaH}
\mbox{
$H=\{x\in Y : \gstar{x}{f}<x\}$,
$ H^\bullet=\{\gstar{x}{f} : x\in H\}$, 
$X=Y\setminus H^\bullet$,
}
\end{equation}
for $y\in Y$,
\begin{equation}\label{canHOMmegint}
h_\mathbf Y(x)=
\left\{
\begin{array}{ll}
x			& \mbox{ if $x\in X$}\\
x_\uparrow	& \mbox{ if $x\in  H^\bullet$}\\
\end{array}
\right. 
,
\end{equation}
and for $x,y\in X$, 
$$
x\dot \wedge y=h_\mathbf Y(x\wedge y),
$$
$$
x\dot \vee y=h_\mathbf Y(x\vee y),
$$
\begin{equation}\label{HHKgffLjljLhjgkH}
\g{x}{y}=h_\mathbf Y(\gstar{x}{y}),
\end{equation}
\begin{equation}\label{HHKJKHJKJJcgfslLJH}
\res{\te}{x}{y}=h_\mathbf Y(\res{\star}{x}{y}).
\end{equation}
\end{definition}

\begin{definition}
Let $\mathbf X=(X,\wedge, \vee,\teALONE,\ite{\te},t,t)$ be an odd involutive FL$_e$-algebra with residual complement $\komp$.
Let
$\mathbf H\leq\mathbf X$ (over $H\subseteq X$), $\mathbf H$ cancellative\footnote{Equivalently, $\lambda(\mathbf H)$ is a lattice ordered abelian group by Lemma~\ref{inducedDEF}.}.
Let $Sp(\mathbf X, \mathbf H)$, 
the $\mathbf H$-split of $\mathbf X$ 
be
$$
\mathbf Y=(Y,\wedge_Y,\vee_Y,\star,\ite{\star},t, t^\bullet)
,
$$ 
where
$ H^\bullet=\{ h^\bullet : h\in H\}$ is a copy of $H$ disjoint from $X$,
\begin{equation}\label{EzAzy}
Y=X\cup  H^\bullet
,
\end{equation}
the lattice ordering $\leq$ of $X$ is extended to $Y$ by
letting 
\begin{equation}\label{KibovitettRendezes}
\mbox{
$ a^\bullet<_Y b^\bullet$ and $x<_Y a^\bullet<_Yy$
for $a,b\in H$,  $a<b$, $x,y\in X$, $x<a\leq y$
,
}
\end{equation}
$h : Y \to X$, 
\begin{equation}\label{canHOM}
h(x)=\left\{
\begin{array}{ll}
x			& \mbox{ if $x\in X$}\\
x_\uparrow	& \mbox{ if $x\in  H^\bullet$}\\
\end{array}
\right. ,
\end{equation}
where $_\uparrow$ and $_\downarrow$ denote the neighbour operations of $\mathbf Y$,
\begin{equation}\label{ProdSplit}
\gstar{x}{y}=\left\{
\begin{array}{ll}
\g{h(x)}{h(y)}		& \mbox{ if $\g{h(x)}{h(y)}\notin H$ or $x,y\in H$}\\
(\g{h(x)}{h(y)})_\downarrow	& \mbox{ if $\neg(x,y\in H)$ and $\g{h(x)}{h(y)}\in H$}\\
\end{array}
\right. ,
\end{equation}
$\kompM{\star} : Y\to Y$,
\begin{equation}\label{SplitNega}
\negaM{\star}{x}=\left\{
\begin{array}{ll}
\nega{x}	& \mbox{ if $x\in X\setminus H$}\\
(\nega{x})_\downarrow	& \mbox{ if $x\in H$}\\
\nega{(x_\uparrow)}		& \mbox{ if $x\in  H^\bullet$}\\
\end{array}
\right. ,
\end{equation}
$$
\res{\star}{x}{y}=\negaM{\star}{(\gstar{x}{\negaM{\star}{y}})}
.
$$
\end{definition}

\begin{theorem}\label{sPliT}
The following statements hold true.
\begin{enumerate}
\item
Let $\mathbf X$ be an odd involutive FL$_e$-algebra, $\mathbf H$ be a cancellative subalgebra of it, and $\mathbf Y$ be $Sp(\mathbf X, \mathbf H)$, the $\mathbf H$-split of $\mathbf X$. Then
\begin{enumerate}
\item\label{HkjasHHLH} 
$\mathbf Y$ is an even FL$_e$-algebra with an idempotent falsum constant.
\item\label{AzAtloAzFalsum}
If $\mathbf X$ is cancellative then 
for $x\in Y$, $\gstar{x}{\negaM{\star}{x}}= t^\bullet$ holds.
\item\label{UNICITYofH}
$\{x\in Sp(\mathbf X, \mathbf H) : x \mbox{ is invertible in $Sp(\mathbf X, \mathbf H)$} \}=
\mathbf H$.
\end{enumerate}

\item
Let $\mathbf Y$ be an even involutive FL$_e$-algebra with an idempotent falsum constant.
Then 
\begin{enumerate}
\item\label{VisszIsMEGyeget} 
there exists a unique pair $\langle\mathbf X, \mathbf H\rangle$ of an odd involutive FL$_e$-algebra
$\mathbf X=(X,\dot \wedge, \dot \vee,\teALONE,\ite{\te},t,t)$
and a cancellative subalgebra $\mathbf H$ of $\mathbf X$ 
such that
$\mathbf Y$ is $Sp(\mathbf X, \mathbf H)$, the $\mathbf H$-split of  $\mathbf X$.
$\mathbf X$ and $\mathbf H$ are given by $\pi_1(\mathbf Y)$ and $\pi_2(\mathbf Y)$, respectively.

\item\label{HaAkkorKancellativ}
If for $x\in Y$, $\gstar{x}{\negaM{\star}{x}}=f$ holds then $\mathbf X$ is cancellative.
\item\label{hSpid}
$h_\mathbf Y$ is a surjective homomorphism (called the {\em canonical} homomorphism of $\mathbf Y$) from $\mathbf Y$ onto $\mathbf X$, that is, it holds true that
$$\mathbf X=h_\mathbf Y(Sp(\mathbf X, \mathbf H)).$$
\end{enumerate}
\end{enumerate}
\end{theorem}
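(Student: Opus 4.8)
\emph{Part (1).} Everything hinges on the retraction $h=h_{\mathbf Y}\colon Y\to X$ of (\ref{canHOM}). I first record its elementary properties, all immediate from (\ref{EzAzy})--(\ref{canHOM}): $h$ is surjective and order-preserving, $h^{-1}(a)=\{\accentset{\bullet} a,a\}$ for $a\in H$ while $h^{-1}(x)=\{x\}$ otherwise, $\accentset{\bullet} a$ is the lower neighbour of $a$ in $\mathbf Y$, and for $x\in X$ one has $x<_Y\accentset{\bullet} a\iff x<a$ and $x>_Y\accentset{\bullet} a\iff x\geq a$. The two key identities are
\begin{equation*}
h(\gstar{x}{y})=\g{h(x)}{h(y)}\qquad\text{and}\qquad h(\negaM{\star}{x})=\nega{h(x)},
\end{equation*}
both verified by a short case analysis from (\ref{ProdSplit}) and (\ref{SplitNega}), using that $H$ is closed under $\komp$ (so $x\notin H$ implies $\nega{x}\notin H$). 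From the first identity one also reads off that $\gstar{u}{v}\in H$ iff $u,v\in H$. Now commutativity of $\star$ is the manifest symmetry of (\ref{ProdSplit}); $t$ is a unit because $t\in H$, $\komp$ fixes $t$ (oddness), and both branches of (\ref{ProdSplit}) then evaluate correctly; and associativity holds because $\gstar{x}{(\gstar{y}{z})}$ and $\gstar{(\gstar{x}{y})}{z}$ share the $h$-image $\g{\g{h(x)}{h(y)}}{h(z)}$, and the only possible discrepancy -- which member of a two-element fibre is meant -- is decided by whether \emph{all} of $x,y,z$ lie in $H$, a symmetric condition. Next $\negaM{\star}$ is an order-reversing involution of $Y$, checked from the three cases of (\ref{SplitNega}) together with (\ref{FelNeg_NegLe}). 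I then show $\accentset{\bullet} t$ is a dualizing element of $(Y,\leq_Y,\star)$: transporting through $h$ and using oddness together with the invertibility in $\mathbf X$ of the elements of $H$ (Lemma~\ref{inducedDEF}) -- so that $\g{h(x)}{\nega{h(x)}}=t$ when $h(x)\in H$, and $\g{h(x)}{\nega{h(x)}}=\nega{\tau(h(x))}\leq t$ always by Lemma~\ref{tuttiINVOLUTIVE}/(\ref{eq_quasi_inverse}),(\ref{tnelnagyobb}) -- one checks that $\negaM{\star}{x}$ is the greatest $z$ with $\gstar{x}{z}\leq\accentset{\bullet} t$, and $\negaM{\star}$ is involutive; Lemma~\ref{SimpLER} then yields that $\star$ is residuated with residuum exactly $\res{\star}{x}{y}=\negaM{\star}{(\gstar{x}{\negaM{\star}{y}})}$ as in the Definition. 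Finally $\accentset{\bullet} t\star\accentset{\bullet} t=t_\downarrow=\accentset{\bullet} t$ is idempotent, $\negaM{\star}{t}=t_\downarrow=\accentset{\bullet} t$, and $t$ covers $\accentset{\bullet} t$, so $\mathbf Y$ is an even involutive FL$_e$-algebra with idempotent falsum $\accentset{\bullet} t$, which is (\ref{HkjasHHLH}). For (\ref{AzAtloAzFalsum}): if $\mathbf X$ is cancellative then $\tau\equiv t$ on $\mathbf X$, hence $h(\gstar{x}{\negaM{\star}{x}})=\g{h(x)}{\nega{h(x)}}=t$; since $\negaM{\star}{x}\notin H$ for every $x$, the second branch of (\ref{ProdSplit}) applies and $\gstar{x}{\negaM{\star}{x}}=t_\downarrow=\accentset{\bullet} t$. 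For (\ref{UNICITYofH}): every $b\in H$ is invertible in $\mathbf Y$ since $\gstar{b}{\nega{b}}=\g{b}{\nega{b}}=t$ (first branch, both in $H$, $b$ invertible in $\mathbf X$); conversely, $\gstar{x}{y}=t$ forces $\g{h(x)}{h(y)}=t\in H$, and the branches of (\ref{ProdSplit}) show that unless $x,y\in H$ the product would equal $t_\downarrow\neq t$.

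\emph{Part (2), existence.} Put $\mathbf X=\pi_1(\mathbf Y)$, $\mathbf H=\pi_2(\mathbf Y)$ as in Definition~\ref{SubgroupSplitREV}. The substantive work is to establish, from evenness, involutivity and idempotency of $f$: that $\sigma\colon x\mapsto\gstar{x}{f}$ is an order-preserving, $\star$-multiplicative, idempotent self-map of $\mathbf Y$ with $\sigma(x)\leq x$ whose fixed-point set is exactly $Y\setminus H$; that $H=\{x:\gstar{x}{f}<x\}$ contains $t$ and is closed under $\star,\negaM{\star},\wedge,\vee$; that for $x\in H$ the element $\gstar{x}{f}$ is the lower neighbour of $x$ in $\mathbf Y$, that $x\mapsto\gstar{x}{f}$ is injective on $H$, and that $\accentset{\bullet} H=\sigma(H)$ is disjoint from $X$, so that $h_{\mathbf Y}$ of (\ref{canHOMmegint}) is well defined and, as in Part~(1), satisfies $h_{\mathbf Y}(\gstar{x}{y})=\g{h_{\mathbf Y}(x)}{h_{\mathbf Y}(y)}$, $h_{\mathbf Y}(\negaM{\star}{x})=\nega{h_{\mathbf Y}(x)}$, is a lattice retraction, and fixes $t$ with $h_{\mathbf Y}(f)=t$. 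Granting these, $\mathbf X$ is closed under its operations by (\ref{HHKgffLjljLhjgkH})--(\ref{HHKJKHJKJJcgfslLJH}), associativity and residuation of $\te$ descend from those of $\star$ along $h_{\mathbf Y}$, involutivity of the residual complement of $\te$ is that of $\negaM{\star}$, and oddness is $\nega{t}=h_{\mathbf Y}(\negaM{\star}{t})=h_{\mathbf Y}(f)=t$; $\mathbf H$ is a subalgebra, and it is cancellative because each of its elements is invertible in $\mathbf X$: for $x\in H$ one deduces $\gstar{x}{\negaM{\star}{x}}=f$ (from $\gstar{x}{f}$ being the lower neighbour of $x$, equivalently $\tau_{\mathbf Y}(x)=t$) and hence $\g{x}{\nega{x}}=h_{\mathbf Y}(f)=t$. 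It then remains to see $\mathbf Y=Sp(\mathbf X,\mathbf H)$, i.e.\ that the order of $\mathbf Y$ is precisely the extension (\ref{KibovitettRendezes}) of the order of $\mathbf X$ and that $\star,\negaM{\star},\ite{\star}$ agree with the formulas defining $Sp(\mathbf X,\mathbf H)$; both follow from the facts just assembled together with $X=Y\setminus\accentset{\bullet} H$ and the fibre description of $h_{\mathbf Y}$.

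\emph{Part (2), uniqueness and the remaining items.} If $\mathbf Y=Sp(\mathbf X',\mathbf H')$ for some odd involutive $\mathbf X'$ and cancellative $\mathbf H'\leq\mathbf X'$, then by the computations of Part~(1) the falsum of $\mathbf Y$ is $\accentset{\bullet} t$ and $H'=\{x\in Y:\gstar{x}{f}<x\}$, $\accentset{\bullet}{H'}=\sigma(H')$, $X'=Y\setminus\accentset{\bullet}{H'}$; moreover (\ref{HHKgffLjljLhjgkH})--(\ref{HHKJKHJKJJcgfslLJH}) together with the identities $h_{\mathbf Y}(\gstar{x}{y})=\g{h_{\mathbf Y}(x)}{h_{\mathbf Y}(y)}$ etc.\ force the operations of $\mathbf X'$ to coincide with those of $\pi_1(\mathbf Y)$, so $\langle\mathbf X',\mathbf H'\rangle=\langle\pi_1(\mathbf Y),\pi_2(\mathbf Y)\rangle$, which is (\ref{VisszIsMEGyeget}). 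For (\ref{HaAkkorKancellativ}): if $\gstar{x}{\negaM{\star}{x}}=f$ for every $x\in Y$, then applying $h_{\mathbf Y}$ and using surjectivity gives $\g{y}{\nega{y}}=t$ for every $y\in X$, i.e.\ every element of $\mathbf X$ is invertible, so the $\te$-reduct is a group and $\mathbf X$ is cancellative. Item (\ref{hSpid}) is the assertion that $h_{\mathbf Y}$ is a surjective FL$_e$-homomorphism onto $\mathbf X$, which is precisely the bundle of identities already in hand: $h_{\mathbf Y}(\gstar{x}{y})=\g{h_{\mathbf Y}(x)}{h_{\mathbf Y}(y)}$, $h_{\mathbf Y}(\negaM{\star}{x})=\nega{h_{\mathbf Y}(x)}$, $h_{\mathbf Y}(\res{\star}{x}{y})=\res{\te}{h_{\mathbf Y}(x)}{h_{\mathbf Y}(y)}$ (combining the previous two with Lemma~\ref{tuttiINVOLUTIVE}/(\ref{eq_quasi_inverse})), preservation of $\wedge,\vee$, and $h_{\mathbf Y}(t)=t$, $h_{\mathbf Y}(f)=t$.

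\emph{The main obstacle.} The real difficulty is concentrated in the structural claims of Part~(2): that $\accentset{\bullet} H=\{\gstar{x}{f}:x\in H\}$ sits exactly one step below $H$ in $\mathbf Y$ (each $\gstar{x}{f}$ with $x\in H$ being the lower cover of the unique $x\in H$ it arises from), and that $H$, once its two copies are identified by $h_{\mathbf Y}$, is closed under the operations and carries a \emph{cancellative} structure, while $X=Y\setminus\accentset{\bullet} H$ stays closed under the pushed-down operations and inherits associativity and residuation. It is here -- and, since $\mathbf Y$ need not be a chain in Part~(2), without the chain-specific parts of Lemma~\ref{tuttiINVOLUTIVE} -- that the hypotheses ``even'', ``involutive'' and ``$f$ idempotent'' are genuinely used; everything else is bookkeeping through $h_{\mathbf Y}$ and $\sigma$.
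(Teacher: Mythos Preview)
Your overall architecture matches the paper's: both organise Part~(1) around the retraction $h$ and the identity $h(x\star y)=h(x)\te h(y)$, verify that $\accentset{\bullet} t$ is dualizing via Lemma~\ref{SimpLER}, and in Part~(2) work with the idempotent map $\sigma(x)=x\star f$ to carve out $H$ and $\accentset{\bullet} H$. Uniqueness via $H=\{x:x\star f<x\}$ is equivalent to the paper's route through~(\ref{UNICITYofH}). So the strategy is right; but three concrete points need repair.

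First, in (\ref{AzAtloAzFalsum}) your justification ``$\negaM{\star}{x}\notin H$ for every $x$'' is false: for $x\in\accentset{\bullet} H$ one has $\negaM{\star}{x}=\nega{(x_\uparrow)}\in H$. The correct statement (and all you need for the second branch of (\ref{ProdSplit})) is that $x$ and $\negaM{\star}{x}$ are never \emph{both} in $H$, which follows since $\kompM{\star}$ interchanges $H$ and $\accentset{\bullet} H$ and fixes $X\setminus H$ setwise. Second, in Part~(2) you assert that $H$ is closed under $\negaM{\star}$; it is not. In $Sp(\mathbf X,\mathbf H)$ one has $\negaM{\star}{m}=(\nega{m})_\downarrow\in\accentset{\bullet} H$ for $m\in H$. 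What $H$ \emph{is} closed under is $\kompM{t}:x\mapsto\res{\star}{x}{t}$, the would-be complement of $\mathbf X$; this is exactly what the paper proves and uses.

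Third, and most substantively: your route to cancellativity of $\mathbf H$ is ``$x\star f$ is the lower neighbour of $x$, equivalently $\tau_{\mathbf Y}(x)=t$, hence $x\star\negaM{\star}{x}=f$''. Neither the alleged equivalence nor the lower-neighbour claim is available at that stage without circularity. The paper proceeds in the opposite order: it first proves directly that every $x\in H$ is invertible in $\mathbf Y$ (from $x\star f<x$ one gets $\negaM{\star}{(x\star f)}>\negaM{\star}{x}$, hence $t\geq x\star(\res{\star}{x}{t})=x\star\negaM{\star}{(x\star f)}\not\leq f$, and evenness forces equality with $t$), and only \emph{then} uses invertibility, via Lemma~\ref{tuttiINVOLUTIVE}/(\ref{diagonalCOSinverse}), to show that $x\star f$ is the lower cover of $x$. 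You should reorder your argument accordingly; once invertibility is in hand, the rest of your sketch (Table-style case analysis for $\star$, descent of associativity and residuation along $h_{\mathbf Y}$, and the check that $\mathbf Y=Sp(\mathbf X,\mathbf H)$) goes through as in the paper.
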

\begin{proof}
(\ref{HkjasHHLH}):
It is obvious from (\ref{KibovitettRendezes}) that
\begin{equation}\label{PontAlatta}
\mbox{
for $a\in H$, $a$ is the unique cover of $ a^\bullet$
}
.
\end{equation}
Therefore, by letting
$k : X \to Y$, 
$$
k(x)=\left\{
\begin{array}{ll}
x			& \mbox{ if $x\in X\setminus H$}\\
 x^\bullet	& \mbox{ if $x\in H$}\\
\end{array}
\right. ,
$$
a moment's reflection shows that $\leq_Y$ is a lattice ordering on $Y$ and the corresponding lattice operations are given by 
$$
\mbox{
$
x\wedge_Yy=y\wedge_Yx
=\left\{
\begin{array}{ll}
y					& \mbox{ if $h(y)\leq h(x)$, $y\in H^\bullet$}\\
h(x)\wedge h(y)			& \mbox{ otherwise}\\
\end{array}
\right. 
,
$
}
$$
$$
\mbox{
$
x\vee_Yy=y\vee_Yx
=\left\{
\begin{array}{ll}
h(x)\vee h(y)		& \mbox{ if $h(x)\vee h(y)\notin H$}\\
k(h(x)\vee h(y))		& \mbox{ if $\{h(x),h(y)\}\not\ni h(x)\vee h(y)\in H$}\\
h(x)\vee h(y)		& \mbox{ if $h(x)\leq h(y)$, $y\in H$}\\
k(h(x)\vee h(y))		& \mbox{ if $h(x)\leq h(y)$, $y\in  H^\bullet$}\\
\end{array}
\right. 
.
$
}
$$
\\
It is straightforward from (\ref{ProdSplit}) that $\star$ is commutative.
$t$ is the unit element of $\star$: using that $t$ is the unit element over $X$,
$$
\gstar{x}{t}
\overset{(\ref{ProdSplit}),\,(\ref{canHOM})}{=}
\left\{
\begin{array}{ll}
\g{h(x)}{t}=h(x)=x		& \mbox{ if $h(x)\notin H$ or $x\in H$}\\
(\g{h(x)}{t})_\downarrow=h(x)_\downarrow=x	& \mbox{ if $x\notin H$ and $h(x)\in H$}\\
\end{array}
\right. .
$$
We obtain $ a^\bullet=a_{\downarrow}<a$ for $a\in H$ from (\ref{PontAlatta}), 
therefore, $\kompM{\star}$ in (\ref{SplitNega}) is clearly an order reversing involution by (\ref{FelNeg_NegLe}).

\smallskip
As for the associativity of $\star$, first notice that 
\begin{equation}\label{Hacsakigynem}
\mbox{
$\gstar{x}{y}\in H$ if and only if $x,y\in H$.
}
\end{equation}
Indeed, if $x,y\in H$ then 
$\gstar{x}{y}\overset{(\ref{ProdSplit})}{=}
\g{h(x)}{h(y)}
\overset{(\ref{canHOM})}{=}
\g{x}{y}\in H
$ since $H$ is closed under $\teALONE$. 
If $\neg(x,y\in H)$ then either
$\g{h(x)}{h(y)}\in H$ in which case
$\gstar{x}{y}\overset{(\ref{ProdSplit})}{\in} H^\bullet$ implies $\gstar{x}{y}\notin H$,
or 
$\g{h(x)}{h(y)}\notin H$ in which case
$\gstar{x}{y}\overset{(\ref{ProdSplit})}{=}
\g{h(x)}{h(y)}\notin H$, hence we are done.
Next notice that
\begin{equation}\label{wefekjhkjkjajkhklHhkl}
\mbox{
for $x,y\in Y$, $h(\gstar{x}{y})=\g{h(x)}{h(y)}$.
}
\end{equation}
Indeed, 
if the first row of (\ref{ProdSplit}) defines the value of $\gstar{x}{y}$ then 
$\gstar{x}{y}=(\g{h(x)}{h(y)})_\downarrow$
and $\g{h(x)}{h(y)}\in H$, and hence
$
h(\gstar{x}{y})=
h((\g{h(x)}{h(y)})_\downarrow)
\overset{(\ref{canHOM})}{=}
(\g{h(x)}{h(y)})_{\downarrow\uparrow}
=
\g{h(x)}{h(y)}$,
whereas 
if 
$\gstar{x}{y}
\overset{(\ref{ProdSplit})}{=}
\g{h(x)}{h(y)}$
then
$
h(\gstar{x}{y})=
h(\g{h(x)}{h(y)})=
\g{h(x)}{h(y)}
$
since $h$ maps to $X$, $X$ is closed under $\teALONE$, and $h$ is the identity on $X$.\\
Now,  
$h(\gstar{(\gstar{x}{y})}{z})=h(\gstar{x}{(\gstar{y}{z})})$ readily follows from (\ref{wefekjhkjkjajkhklHhkl}) and the associativity of $\teALONE$.
Therefore, by (\ref{canHOM}), $\gstar{(\gstar{x}{y})}{z}\not=\gstar{x}{(\gstar{y}{z})}$ can only be possible if one side is in $H$ and the other side is in $ H^\bullet$.
However, (\ref{Hacsakigynem}) shows that if one side is in $H$ then $x,y,z\in H$, and hence, since $H$ is closed under $\star$ by (\ref{Hacsakigynem}), also the other side must be in $H$.

\smallskip
It is easily seen that 
\begin{equation}\label{POwrtleqY}
\mbox{
$\star$ is isotone with respect to  $\leq_Y$.
}
\end{equation}
Indeed, let $x,y,z\in Y$ with $y<_Y z$.
Since $h$ and $\teALONE$ are increasing with respect to $\leq$, it follows that $\g{h(x)}{h(y)}\leq\g{h(x)}{h(z)}$.
Now by (\ref{ProdSplit}), $\gstar{x}{y}\leq_Y\gstar{x}{z}$ clearly holds if $\g{h(x)}{h(y)}<\g{h(x)}{h(z)}$, hence we may assume $\g{h(x)}{h(y)}=\g{h(x)}{h(z)}$.
But then the only way for $\gstar{x}{y}\not\leq_Y\gstar{x}{z}$ to hold is if 
$
\gstar{x}{y}=\g{h(x)}{h(y)}
$
and 
$
\gstar{x}{z}=
(\g{h(x)}{h(z)})_\downarrow
$,
which by (\ref{ProdSplit}) leads to assuming
$\neg(x,z\in H)$, $\g{h(x)}{h(z)}\in H$
and
either 
$\g{h(x)}{h(y)}\notin H$ or $x,y\in H$.
It follows that $x,y\in H$.
But then $\g{h(x)}{h(y)}=\g{x}{y}<\g{x}{h(z)}\leq\g{h(x)}{h(z)}$ is a contradiction, where the strict inequality follows from $y<h(z)$ and since $x$ has an inverse.

\smallskip
To prove that $\star$ is residuated, and that
$
\res{\star}{x}{y}=\negaM{\star}{(\gstar{x}{\negaM{\star}{y}})},
$
by 
Lemma~\ref{SimpLER}
it suffices to verify that $ t^\bullet$ is a dualizing element of $(Y,\leq,\star)$. 
Since $\kompM{\star}$ is clearly an order reversing involution, it suffices to verify that that $\res{\star}{x}{t_\downarrow}$ exists and is equal to $\negaM{\star}{x}$.
It amounts to verifying only three cases.
(i)
If $x\in H$ then $\negaM{\star}{x}=(\nega{x})_\downarrow\in  H^\bullet$ and 
$\gstar{x}{\negaM{\star}{x}}
\overset{(\ref{SplitNega})}{=}
\gstar{x}{(\nega{x})_\downarrow}
\overset{(\ref{ProdSplit})}{=}
(\g{x}{\nega{x})}_\downarrow=t_\downarrow$, where in the last equality we used that $\mathbf H$ is a subgroup, hence $\nega{x}$ is the inverse of $x$ in $\mathbf X$.
On the other hand, for $z>\negaM{\star}{x}$ it follows that $z\geq\nega{x}$ and hence 
$\gstar{x}{z}
\overset{(\ref{POwrtleqY})}{\geq}
\gstar{x}{\nega{x}}
\overset{\nega{x}\in H,\,(\ref{ProdSplit})}{=}
\g{x}{\nega{x}}=t>t_\downarrow$.
Therefore, by residuation, $\res{\star}{x}{t_\downarrow}$ exists and is equal to $\negaM{\star}{x}$.
(ii)
If $x\in H^\bullet$ then $x=y_\downarrow$ for some $y\in H$ and $\negaM{\star}{x}=\nega{y}=\nega{(x_\uparrow)}
\overset{(\ref{FelNeg_NegLe})}{=}
\nega{x}_\downarrow>\nega{x}$.
We obtain 
$\gstar{x}{\negaM{\star}{x}}=\gstar{y_\downarrow}{\nega{y}}
\overset{(\ref{ProdSplit})}{=}
(\g{y}{\nega{y})}_\downarrow=t_\downarrow$.
On the other hand, for $z>\negaM{\star}{x}$ it follows that 
$z\geq\nega{x}$
and hence 
$\gstar{x}{z}
\overset{(\ref{POwrtleqY})}{\geq}
\gstar{y_\downarrow}{\nega{x}}\overset{(\ref{ProdSplit})}{=}(\g{y}{\nega{x}})_\downarrow=(\g{y}{\nega{y}_\uparrow})_\downarrow$.
Here $\g{y}{\nega{y}_\uparrow}>t$ holds by residuation since $\nega{y}_\uparrow>\nega{y}$, hence 
$\gstar{x}{z}>t_\downarrow$ follows.
(iii)
Finally, if $x\in X\setminus H$ then $\negaM{\star}{x}=\nega{x}\in X\setminus H$.
The case $\gstar{x}{\negaM{\star}{x}}=\gstar{x}{\nega{x}}\in H$ leads to
$\gstar{x}{\negaM{\star}{x}}=\gstar{x}{\nega{x}}\overset{(\ref{ProdSplit})}{=}(\g{x}{\nega{x})}_\downarrow\leq t_\downarrow$, whereas 
if $\gstar{x}{\negaM{\star}{x}}=\gstar{x}{\nega{x}}\in X\setminus H$ then 
$\gstar{x}{\negaM{\star}{x}}=\gstar{x}{\nega{x}}\overset{(\ref{ProdSplit})}{=}\g{x}{\nega{x}}\leq t$,
but due to $\gstar{x}{\negaM{\star}{x}}\in X\setminus H$ and $t\in H$ equality cannot hold, hence here too, $\gstar{x}{\negaM{\star}{x}}\leq t_\downarrow$ follows.
On the other hand if $z>\negaM{\star}{x}=\nega{x}$ then $\gstar{x}{z}\geq(\g{x}{z})_\downarrow>t_\downarrow$ holds by (\ref{ProdSplit}) and by residuation, respectively. 
\\
Summing up, the falsum-free reduct of $\mathbf Y$ is an involutive commutative residuated lattice with residual complement operation $\kompM{\star}$. 
By 
the second row of (\ref{SplitNega}), $\negaM{\star}{t}=(\nega{t})_\downarrow=t_\downarrow$, and $t_\downarrow$ is idempotent by the second row of (\ref{ProdSplit}). 
A particular instance of (\ref{PontAlatta}) shows that $t$ is the unique cover of $ t^\bullet$, hence $ t^\bullet=t_\downarrow$ and thus $\mathbf Y$ is even. 

\medskip
(\ref{AzAtloAzFalsum}):
Since 
\begin{equation}\label{IgyJoBb}
\g{h(x)}{h(\nega{x})}=\g{x}{\nega{x}}\overset{L\ref{tuttiINVOLUTIVE}\ref{canc_Fremegy}}{=}
f=t\in H,
\end{equation}
for $x\in Y$,
$
\gstar{x}{\negaM{\star}{x}}
\overset{(\ref{SplitNega})}{=}
$
$$
\small
\left\{
\begin{array}{ll}
\gstar{x}{\nega{x}}
\overset{(\ref{ProdSplit}),\,(\ref{IgyJoBb})}{=}
(\g{h(x)}{h(\nega{x})})_\downarrow
= t^\bullet
& \mbox{ if $x\in X\setminus H$}\\
\gstar{x}{(\nega{x})_\downarrow}
\overset{(\ref{ProdSplit})}{=}
(\g{h(x)}{h(\nega{x}_\downarrow)})_\downarrow
\overset{\nega{x}\in H,\,(\ref{canHOM})}{=}
(\g{x}{\nega{x}})_\downarrow
\overset{(\ref{EzAResComplement})}{=}
(\g{x}{x^{-1}})_\downarrow
= t^\bullet
& \mbox{ if $x\in H$}\\
\gstar{x}{\nega{(x_\uparrow)}}
\overset{(\ref{ProdSplit})}{=}
(\g{h(x)}{h(\nega{x_\uparrow})})_\downarrow
\overset{(\ref{canHOM})}{=}
(\g{x_\uparrow}{\nega{x_\uparrow}})_\downarrow
\overset{(\ref{EzAResComplement})}{=}
(\g{x_\uparrow}{{x_\uparrow}^{-1}})_\downarrow
= t^\bullet
& \mbox{ if $x\in  H^\bullet$}\\
\end{array}
\right. .
$$

\medskip
(\ref{UNICITYofH}):
Since $t\in H$ and $ H^\bullet$ is disjoint from $H$, referring to (\ref{canHOM}), it follows from (\ref{ProdSplit}) that $\gstar{x}{y}$ can be equal to $t$ only if $x,y\in H$. On the other hand, every element of $H$ is invertible in $\mathbf X$ by claim~\ref{csoportLesz} in Lemma~\ref{tuttiINVOLUTIVE}, since $\mathbf H$ is cancellative by assumption.
Hence
$
\gstar{x}{x^{-1}}
\overset{(\ref{ProdSplit})}{=}
\g{h(x)}{h(x^{-1})}
\overset{(\ref{canHOM})}{=}
\g{x}{x^{-1}}
=t
$.

\bigskip
(\ref{VisszIsMEGyeget}):
Unicity of $\mathbf H$ follows from claim~(\ref{UNICITYofH}), and it readily implies the unicity of $\mathbf X$, too, by (\ref{EzAzy}).
Let $\mathbf X=\pi_1(\mathbf Y)$ and $\mathbf H=\pi_2(\mathbf Y)$.
Denote $ x^\bullet=\gstar{x}{f}$ for $x\in H$.

\begin{enumerate}[(i)]
\item\label{Hinvertible}
Any element $x$ of $H$ is invertible, that is, $\gstar{x}{\negaM{t}{x}}=t$ holds, where $\kompM{t}$ is given by $$\negaM{t}{x}=\res{\star}{x}{t}.$$
Indeed, $t\geq\gstar{x}{(\res{\star}{x}{t})}
\overset{L\ref{tuttiINVOLUTIVE}\ref{eq_quasi_inverse}}{=}
\gstar{x}{\negaM{\star}{(\gstar{x}{\negaM{\star}{t}})}}
\overset{L\ref{tuttiINVOLUTIVE}\ref{MiNdig}}{=}
\gstar{x}{\negaM{\star}{(\gstar{x}{f})}}\not\leq f$,
where the latest step holds by residuation since 
$x\in H$, that is 
$\gstar{x}{f}<x$, and it implies $\negaM{\star}{(\gstar{x}{f})}>\negaM{\star}{x}$. 
Since $\mathbf Y$ is even, $t\geq\gstar{x}{\negaM{t}{x}}\not\leq f$ implies 
$\gstar{x}{\negaM{t}{x}}=t$.
\item\label{NincsMetszet}
$H\cap H^\bullet=\emptyset$. Indeed, if $x\in H^\bullet$, that is, if $x=\gstar{y}{f}$ for some $y\in H$ then 
$\gstar{x}{f}=\gstar{(\gstar{y}{f})}{f}=\gstar{y}{(\gstar{f}{f})}=\gstar{y}{f}=x$ ensures $x\notin H$.
\item\label{subsetHOOD}
$t\in H\subseteq X$. Indeed, $t\in H$ because of $\gstar{t}{f}=f<t$, and $H\subseteq X$ readily follows from (\ref{EQmiaH}) and claim~\ref{NincsMetszet}.
Hence it holds true that
\begin{equation}\label{cupPONT}
Y=(X\setminus H)
\overset{\cdot}{\cup}
H
\overset{\cdot}{\cup}
 H^\bullet.
\end{equation}

\item\label{AlaLovokk}
Next we prove $ m^\bullet=m_\downarrow<m$ for $m\in H$. 
Indeed, the assumption $\gstar{m}{f}<z<m$ would yield 
$f=
\gstar{t}{f}
\overset{\claim \ref{Hinvertible}}{=}
\gstar{(\gstar{\negaM{t}{m}}{m})}{f}=
\gstar{\negaM{t}{m}}{(\gstar{m}{f})}
\overset{L\ref{tuttiINVOLUTIVE}\ref{diagonalCOSinverse}}{<}
\gstar{\negaM{t}{m}}{z}
\overset{L\ref{tuttiINVOLUTIVE}\ref{diagonalCOSinverse}}{<}
\gstar{\negaM{t}{m}}{m}=t$, a contradiction to $t$ covering $f$.
\item\label{nyilazaS}
For $m\in H$ and $y\in (X\setminus H)\cup H^\bullet$ it holds true that
$$\gstar{ m^\bullet}{y}=\gstar{m}{y}.$$
Indeed, $(X\setminus H)\cup H^\bullet=Y\setminus H$ holds by (\ref{cupPONT}), therefore
$\gstar{y}{f}=y$. 
We obtain 
$
\gstar{m}{y}=\gstar{m}{(\gstar{y}{f})}=\gstar{(\gstar{m}{f})}{y}=\gstar{ m^\bullet}{y}$, as stated.
\item\label{OTTazINVERZhBAN}
For $m\in H$, $\negaM{t}{m}\in H$:
$\negaM{t}{m}\notin H$ would imply $
t
\overset{\claim \ref{Hinvertible}}{=}
\gstar{m}{\negaM{t}{m}}
\overset{\claim \ref{nyilazaS}}{=}
\gstar{ m^\bullet}{\negaM{t}{m}}
=
\gstar{(\gstar{m}{f})}{\negaM{t}{m}}
=
\gstar{(\gstar{m}{\negaM{t}{m}})}{f}
\overset{\claim \ref{Hinvertible}}{=}
\gstar{t}{f}
=
f$,
a contradiction.
\end{enumerate}

\noindent
The following product table holds true\footnote{We shall refer to the $(i,j)$ cell of this product table by $\star_{(i,j)}$}, see Table~\ref{Reszeik}.
\begin{table}[h]
\begin{center}
\caption{}
\begin{tabular}{ccccc}
$\star$ & \vline  & $y\in X\setminus H$ & $l\in H$  & $ l^\bullet\in H^\bullet$ \\
\hline
$x\in X\setminus H$  & \vline & $\in(X\setminus H)\cup H^\bullet$ & $\in X\setminus H$ & $\gstar{x}{l}\in X\setminus H$ \\
$m\in H$  & \vline & \color{midgrey}$\in(X\setminus H)$  & $\in H$ & $(\gstar{m}{l})^\bullet\in H^\bullet$ \\
$ m^\bullet\in H^\bullet$ & \vline & \color{midgrey}$\gstar{m}{y}\in X\setminus H$ & \color{midgrey}$(\gstar{m}{l})^\bullet\in H^\bullet$ & $(\gstar{m}{l})^\bullet\in H^\bullet$  \\
\label{Reszeik}
\end{tabular}
\end{center}
\end{table}
\item[$\star_{(2,2)}$:]
Since $m$ is invertible by claim~\ref{Hinvertible}, $\gstar{(\gstar{m}{l})}{f}=\gstar{m}{(\gstar{l}{f})}<\gstar{m}{l}$ follows by claim~\ref{diagonalCOSinverse} in Lemma~\ref{tuttiINVOLUTIVE}.
\item[$\star_{(2,3)}$:]
$\gstar{m}{ l^\bullet}=\gstar{m}{(\gstar{l}{f})}=\gstar{(\gstar{m}{l})}{f}
\overset{\star_{(2,2)}}{=}
(\gstar{m}{l})^\bullet\in H^\bullet$.
\item[$\star_{(3,3)}$:]
$\gstar{ m^\bullet}{ l^\bullet}=\gstar{(\gstar{m}{f})}{(\gstar{l}{f})}=\gstar{(\gstar{m}{l})}{(\gstar{f}{f})}=\gstar{(\gstar{m}{l})}{f}
\overset{\star_{(2,2)}}{=}
(\gstar{m}{l})^\bullet
\in H^\bullet$.
\item[$\star_{(1,2)}$:]
By (\ref{cupPONT}) the opposite of the statement is $
\gstar{x}{l}\in H\cup H^\bullet$.
Then, by claims~\ref{Hinvertible} and \ref{OTTazINVERZhBAN},
$x=
\gstar{x}{t}=
\gstar{x}{(\gstar{l}{\negaM{t}{l}})}=
\gstar{(\gstar{x}{l})}{\negaM{t}{l}}
\in
\gstar{( H\cup H^\bullet)}{H}\subseteq H\cup H^\bullet$
follows using $\star_{(2,2)}$ and $\star_{(2,3)}$, a contradiction to (\ref{cupPONT}).
\item[$\star_{(1,3)}$:] It follows from claim~\ref{nyilazaS} and $\star_{(1,2)}$.
\item[$\star_{(1,1)}$:]
Since $y\notin H$, $y=\gstar{y}{f}$ holds. 
Therefore,
$\gstar{x}{y}=\gstar{x}{(\gstar{y}{f})}=\gstar{(\gstar{x}{y})}{f}$ follows, hence $\gstar{x}{y}$ cannot be in $H$.

\medskip\noindent
We are ready to prove that $\mathbf X=(X,\dot \wedge, \dot \vee,\teALONE,\ite{\te},t,t)$ is an odd involutive FL$_e$-algebra.
($X,\dot \wedge,\dot \vee)$ is a lattice.
Indeed, all elements of $ H^\bullet$ are meet-irreducible because of claim~\ref{AlaLovokk}, hence $X$ is closed under $\dot \wedge$ (clearly, $\dot \wedge$ is the restriction of $\wedge$ to $X$).
Commutativity of $\dot \vee$ is straightforward, and using claim~\ref{AlaLovokk} a moment's reflection shows that $\dot \vee$ is associative, too, and the absorption law holds for $\dot \vee$ and $\dot \wedge$. 
Commutativity of $\teALONE$ is straightforward. 
$X$ is closed under $\teALONE$ since by claim~\ref{AlaLovokk}, $(\gstar{x}{y})_\uparrow\in H$ if $\gstar{x}{y}\in H^\bullet$.
$t\in X$ holds by claim~\ref{subsetHOOD}.
Since $t\in H$, $\star_{(2,1)}$ and $\star_{(2,2)}$
show that for $y\in X$,  $\gstar{t}{y}\notin H^\bullet$ holds, hence
$
\g{t}{y}
\overset{(\ref{HHKgffLjljLhjgkH})}{=}
h_\mathbf Y(\gstar{t}{y})
\overset{(\ref{canHOMmegint})}{=}
\gstar{t}{y}
$
and thus
$t$ is the unit element for $\teALONE$ over $X$ since it is the unit element for $\star$ over $Y$.
\\
As for the associativity of $\teALONE$, notice that 
\begin{equation}\label{MegintHhh}
\mbox{
for $x,y\in Y$, $h_\mathbf Y(\gstar{x}{y})=\g{h_\mathbf Y(x)}{h_\mathbf Y(y)}$ holds.
}
\end{equation}
Indeed, by (\ref{HHKgffLjljLhjgkH}), $\g{h_\mathbf Y(x)}{h_\mathbf Y(y)}=h_\mathbf Y(\gstar{h_\mathbf Y(x)}{h_\mathbf Y(y)})$, and 
Table~\ref{Reszeik} readily confirms that $h_\mathbf Y(\gstar{h_\mathbf Y(x)}{h_\mathbf Y(y)})=h_\mathbf Y(\gstar{x}{y})$.
Hence, $(X,\teALONE)$ being the homomorphic image of a semigroup, is a semigroup.
\\
Next we prove that $\teALONE$ is residuated.
For $x,y\in X$, 
$
\res{\te}{x}{y}=
\max\{z\in X:\g{x}{z}\leq y\}
$.
Here
$$
\g{x}{z}
\overset{(\ref{HHKgffLjljLhjgkH})}{=}
\left\{
\begin{array}{ll}
\gstar{x}{z}				& \mbox{ if $\gstar{x}{z}\notin H^\bullet$}\\
(\gstar{x}{z})_\uparrow		& \mbox{ if $\gstar{x}{z}\in H^\bullet$}\\
\end{array}
\right.
.
$$
If $\gstar{x}{z}\in H^\bullet$ then since $y\in X$, $(\gstar{x}{z})_\uparrow\leq y$ holds if and only if $\gstar{x}{z}\leq y$ holds by claim~\ref{AlaLovokk}.
Therefore, 
$\max\{z\in X:\g{x}{z}\leq y\}=\max\{z\in X:\gstar{x}{z}\leq y\}$ holds yielding
$\res{\te}{x}{y}=\res{\star}{x}{y}$.
Since $\res{\te}{x}{y}\in X$, it also follows that $\res{\te}{x}{y}\overset{(\ref{canHOMmegint})}{=}
h_\mathbf Y(\res{\te}{x}{y})=h_\mathbf Y(\res{\star}{x}{y})$, as stated.
\\
Involutivity of $\kompM{t}$ is seen as follows.
We will verify that
$$
\mbox{
$\negaM{t}{x}=\negaM{\star}{x}_\uparrow$ if $x\in H$, and
$\negaM{t}{x}=\negaM{\star}{x}$ if $x\in X\setminus H$.
}
$$
This, combined with (\ref{FelNeg_NegLe}) and that $\negaM{t}{x}\in H$ if and only if $x\in H$ (shown by claims~\ref{OTTazINVERZhBAN} and \ref{Hinvertible}) concludes the proof of the statement. 
Clearly, $\negaM{t}{x}=\res{\star}{x}{t}\geq\res{\star}{x}{f}=\negaM{\star}{x}$.
Let $x\in H$. 
Equality cannot hold since $\gstar{x}{\negaM{t}{x}}=t$ by claim~\ref{Hinvertible}, whereas $\gstar{x}{\negaM{\star}{x}}\leq f<t$ by residuation and since $\mathbf Y$ is even.
Assume that there exits $a\in X$ such that 
$\negaM{t}{x}>a>\negaM{\star}{x}$.
Then
$
\negaM{t}{x}
=
\res{\star}{x}{t}
\overset{L\ref{tuttiINVOLUTIVE}\ref{eq_quasi_inverse}}{=}
\negaM{\star}{(\gstar{x}{\negaM{\star}{t}})}
\overset{L\ref{tuttiINVOLUTIVE}\ref{MiNdig}}{=}
\negaM{\star}{(\gstar{x}{f})}
>
a
>
\negaM{\star}{x}
$,
and hence
$\gstar{x}{f}<\negaM{\star}{a}<x$, a contradiction to claim~\ref{AlaLovokk}.
Let $x\in X\setminus H$. 
If $\negaM{t}{x}>\negaM{\star}{x}$ then, as above, $\gstar{x}{f}<x$ follows, a contradiction to $x\notin H$.
\\
Finally, $\mathbf X$ is clearly odd, since the constant which defines the involution $\kompM{t}$ is the unit element.

\medskip\noindent
Next we prove that $\mathbf H$ is a cancellative subalgebra of $\mathbf X$.
Indeed, $H$ is closed under $\star$, shown by $\star_{(2,2)}$, 
$t\in H$ holds by claim \ref{subsetHOOD},
and $\mathbf H$ has an inverse operation $\kompM{t}$, see claims~\ref{Hinvertible} and \ref{OTTazINVERZhBAN},
hence $\mathbf H$ is cancellative by claim~\ref{csoportLesz} in Lemma~\ref{tuttiINVOLUTIVE}.
It is a subalgebra of $\mathbf X$ by claim~\ref{subsetHOOD}.

\medskip\noindent
\item
Finally, we verify that
$\mathbf Y$ is the $\mathbf H$-split of $\mathbf X$.
Indeed, the universe is as expected, see (\ref{cupPONT}).
The elements of $ H^\bullet$ are just below the respective elements of $H$, as they should be, see claim~\ref{AlaLovokk}.
Finally we verify that $\star$ coincides with the product operation of the $\mathbf H$-split of $\mathbf X$.
\begin{description}
\item
If $x,y\in H$ then $\gstar{x}{y}\in H$ by $\star_{(2,2)}$. Hence $\gstar{x}{y}\notin  H^\bullet$ by claim~\ref{NincsMetszet}, yielding $\gstar{x}{y}\overset{(\ref{HHKgffLjljLhjgkH})}{=}\g{x}{y}\overset{(\ref{canHOMmegint})}{=}\g{h_\mathbf Y(x)}{h_\mathbf Y(y)}$, as required in (\ref{ProdSplit}).
\item
If $\g{h_\mathbf Y(x)}{h_\mathbf Y(y)}\notin H$ then since $h_\mathbf Y(x),h_\mathbf Y(y)\in X$ and $X$ is closed under $\teALONE$, $\g{h_\mathbf Y(x)}{h_\mathbf Y(y)}\in X\setminus H$ follows. Therefore, by $\star_{(1-1,2-2)}$\footnote{This notation refers to the submatrix containing $\star_{(1,1)}$, $\star_{(1,2)}$, $\star_{(2,1)}$, $\star_{(2,2)}$.}, either $h_\mathbf Y(x)$ or $h_\mathbf Y(y)$ (say $h_\mathbf Y(x)$) must be in $X\setminus H$, yielding $h_\mathbf Y(x)\overset{(\ref{canHOMmegint})}{=}x\in X\setminus H$. 
By the first row of Table~\ref{Reszeik}, for any $y\in Y$, $\gstar{x}{y}=\gstar{x}{h_\mathbf Y(y)}$.
Since $H\not\ni\g{h_\mathbf Y(x)}{h_\mathbf Y(y)}
\overset{(\ref{MegintHhh})}{=}
h_\mathbf Y(\gstar{x}{y})$, it follows that $\gstar{x}{y}\notin H^\bullet$ and hence
$\gstar{x}{h_\mathbf Y(y)}\notin H^\bullet$.
Therefore, $\gstar{x}{y}=\gstar{x}{h_\mathbf Y(y)}\overset{(\ref{canHOMmegint})}{=}h_\mathbf Y(\gstar{x}{h_\mathbf Y(y)})\overset{(\ref{HHKgffLjljLhjgkH})}{=}\g{x}{h_\mathbf Y(y)}\overset{(\ref{canHOMmegint})}{=}\g{h_\mathbf Y(x)}{h_\mathbf Y(y)}$, as required in (\ref{ProdSplit}). 
\item
Assume $\neg(x,y\in H)$ and $\g{h_\mathbf Y(x)}{h_\mathbf Y(y)}\in H$.
By (\ref{MegintHhh}), $h_\mathbf Y(\gstar{x}{y})\in H$, hence $\gstar{x}{y}\in H\cup H^\bullet$. Since $\neg(x,y\in H)$, by Table~\ref{Reszeik} it follows that $\gstar{x}{y}\in  H^\bullet$.
Table~\ref{Reszeik} and $\gstar{x}{y}\in  H^\bullet$ also implies that either $x,y\in X\setminus H$, or at least one of $x$ and $y$ is in $ H^\bullet$ and the other is in $H\cup H^\bullet$. In all these cases, by Table~\ref{Reszeik} and (\ref{canHOMmegint}) it follows that
$\gstar{x}{y}=\gstar{h_\mathbf Y(x)}{h_\mathbf Y(y)}$.
Hence $\gstar{h_\mathbf Y(x)}{h_\mathbf Y(y)}\in H^\bullet$ and it yields 
$
(\gstar{h_\mathbf Y(x)}{h_\mathbf Y(y)})_\uparrow
\overset{(\ref{canHOMmegint})}{=}
h_\mathbf Y(\gstar{h_\mathbf Y(x)}{h_\mathbf Y(y)})
\overset{(\ref{HHKgffLjljLhjgkH})}{=}
\g{h_\mathbf Y(x)}{h_\mathbf Y(y)}
$,
that is,
$\gstar{x}{y}=\gstar{h_\mathbf Y(x)}{h_\mathbf Y(y)}=(\g{h_\mathbf Y(x)}{h_\mathbf Y(y)})_\downarrow$, as required in (\ref{ProdSplit}).
\end{description}

\smallskip
(\ref{HaAkkorKancellativ}):
Since $h_\mathbf Y$ maps onto $\mathbf X$, 
to prove that $\mathbf X$ is cancellative it suffices to prove that each element of $h_\mathbf Y(\mathbf Y)$ has inverse, which holds since for $x\in Y$, 
$
\g{h_\mathbf Y(x)}{h_\mathbf Y(\negaM{\star}{x})}
\overset{(\ref{MegintHhh})}{=}
h_\mathbf Y(\gstar{x}{\negaM{\star}{x}})
=
h_\mathbf Y(f)
\overset{\mathbf Y \ is \  even}{=}
h_\mathbf Y(t_\downarrow)
\overset{t\in H,\,(\ref{canHOM})}{=}
t
$.

\medskip
(\ref{hSpid}):
It is obvious that $h_\mathbf Y$ preserves the unit element and the falsum constant. 
The definitions in (\ref{canHOMmegint})-(\ref{HHKJKHJKJJcgfslLJH}) readily yield that $h_\mathbf Y$ preserves the meet, the join, the product and the residual operation.
\end{proof}

\section{Bunches of layer algebras vs.\,bunches of layer groups}\label{COnsTRUctioN}

We introduce the notion of bunches of layer groups, and show that every bunch of layer algebras can be represented by a unique bunch of layer groups.

\begin{definition}\label{DEFbunch}
Let $(\kappa,\leq_\kappa)$ be a totally ordered set with least element $t$, and
let an ordered triple
$\langle \bar\kappa_I, \bar\kappa_J, \{t\}\rangle$
be a partition of $\kappa$, where $\bar\kappa_I$ and $\bar\kappa_J$ can also be empty. 
Define $\kappa_o$, $\kappa_J$, and $\kappa_I$ by one of the rows of Table~\ref{ThetaPsiOmega}, 
and let 
$\boldsymbol\kappa=\langle \kappa_o, \kappa_J, \kappa_I,\leq_\kappa\rangle$. 
Let
$\textbf{\textit{G$_u$}}=(G_u,\preceq_u,\cdot_u,\ { }^{-1_u},u)$
be a family of abelian $o$-groups indexed by elements of $\kappa$, 
an let 
$\textbf{\textit{H$_u$}}=(H_u,\preceq_u,\cdot_u,\ { }^{-1_u},u)$
be a family of abelian $o$-groups indexed by elements of $\kappa_I$, 
such that 
\begin{equation}\label{DiSCRetE}
\mbox{
for $u\in\kappa_J$, $\textbf{\textit{G$_u$}}$ is discrete,
}
\end{equation}
$$
\mbox{
for $u\in\kappa_I$, $\textbf{\textit{H$_u$}}\leq\textbf{\textit{G$_u$}}$,
}
$$
and 
such that
for $u,v\in\kappa$, $u\leq_\kappa v$, there exist
$$
\mbox{
homomorphisms $\varsigma_{u\to v} : G_u\to G_v$ 
}
$$
satisfying
\begin{itemize}
\item[(G1)] 
$\varsigma_{u\to u}=id_{G_u}$ and $\varsigma_{v\to w}\circ\varsigma_{u\to v}=\varsigma_{u\to w}$ \hfill (direct system property),
\item[(G2)]
for $v>_\kappa u\in\kappa_J$,
$\varsigma_{u\to v}(u)=\varsigma_{u\to v}(u_{\downarrow_u})$,
\item[(G3)]
for $u<_\kappa v\in\kappa_I$,
$\varsigma_{u\to v}$ maps into $H_v$.

\end{itemize}
Call 
${\mathcal G}=\langle \textbf{\textit{G$_u$}},\textbf{\textit{H$_u$}}, \varsigma_{u\to v} \rangle_{\boldsymbol\kappa}$
a {\em bunch of layer groups}.
\end{definition}

\begin{lemma}\label{BUNCHalg_BUNCHgroup}
The following statements hold true.
\begin{enumerate}
\item 
Given a bunch of layer algebras 
${\mathcal A}=\langle \mathbf X_u, \rho_{u\to v} \rangle_{\boldsymbol\kappa}$ with $\boldsymbol\kappa={\langle \kappa_o, \kappa_J, \kappa_I, \leq_\kappa\rangle}$,
$$\mathcal G_{\mathcal A}=\langle \textbf{\textit{G$_u$}},\textbf{\textit{H$_u$}}, \varsigma_{u\to v} \rangle_{\boldsymbol\kappa}$$
is bunch of layer groups, where
\begin{equation}\label{DEFcsopi}
\textbf{\textit{G$_u$}}=(G_u,\preceq_u,\cdot_u,\ { }^{-1_u},u)=
\left\{
\begin{array}{ll}
\mbox{$\lambda(\mathbf X_u)$} & \mbox{if $u\in\kappa_o$}\\
\mbox{$\lambda\left({\mathbf X_u}_{\contour{black}{$_\uparrow$}}\right)$} & \mbox{if $u\in\kappa_J$}\\
\mbox{$\lambda(\pi_1(\mathbf X_u))$
}
& \mbox{if $u\in\kappa_I$}\\
\end{array}
\right. ,
\end{equation}
for $u\in\kappa_I$, 
\begin{equation}\label{DEFcsopiH}
\textbf{\textit{H$_u$}}=(H_u,\preceq_u,\cdot_u,\ { }^{-1_u},u)=\lambda(\pi_2(\mathbf X_u)),
\end{equation}
$\kappa=\kappa_o\cup\kappa_J\cup\kappa_I$, 
and for $u,v\in\kappa$ such that $u\leq_\kappa v$, $\varsigma_{u\to v} : G_u\to G_v$ is defined by
\begin{equation}\label{DEFvarsigma}
\varsigma_{u\to v}=
\rho_{u\to v}|_{G_u}
.
\end{equation}
Call $\mathcal G_{\mathcal A}$ the {\em bunch of layer groups derived from $\mathcal A$}.
\item 
Given a bunch of layer groups $\mathcal G=\langle \textbf{\textit{G$_u$}},\textbf{\textit{H$_u$}}, \varsigma_{u\to v} \rangle_{\boldsymbol\kappa}$  with $\boldsymbol\kappa={\langle \kappa_o, \kappa_J, \kappa_I, \leq_\kappa\rangle}$,
$${\mathcal A}_{\mathcal G}=\langle \mathbf X_u, \rho_{u\to v} \rangle_{\boldsymbol\kappa}$$
is bunch of layer algebras, 
called the {\em bunch of layer algebras derived from $\mathcal G$}, 
where
\begin{equation}\label{GroupToAlgebra}
\mathbf X_u=(X_u,\leq_u,\teu,\ite{u},u,\negaM{u}{u})
=
\left\{
\begin{array}{ll}
\iota(\textbf{\textit{G$_u$}}) 
& \mbox{if $u\in\kappa_o$}\\
\mbox{$\iota(\textbf{\textit{G$_u$}})_{\contour{black}{$_\downarrow$}}$}
& \mbox{if $u\in\kappa_J$}\\
Sp(\iota(\textbf{\textit{G$_u$}}),\iota(\textbf{\textit{H$_u$}})),
& \mbox{if $u\in\kappa_I$}\\
\end{array}
\right. ,
\end{equation}
$\kappa=\kappa_o\cup\kappa_J\cup\kappa_I$,
and for $u,v\in\kappa$ such that $u\leq_\kappa v$, $\rho_{u\to v} : X_u\to X_v$ is defined by
\begin{equation}\label{DEFrho}
\rho_{u\to v}=
\left\{
\begin{array}{ll}
\varsigma_{u\to v} & \mbox{if $u\notin\kappa_I$}\\
\varsigma_{u\to v}\circ h_u & \mbox{if $v>u\in\kappa_I$}\\
id_{X_u} & \mbox{if $v=u\in\kappa_I$}\\
\end{array}
\right. ,
\end{equation}
where $h_u$ is the canonical homomorphism of $\mathbf X_u$. 
\item 
Given a bunch of layer groups $\mathcal G$, it holds true that 
$\mathcal G_{({\mathcal A}_\mathcal G)}=\mathcal G$, and
given a bunch of layer algebras $\mathcal A$, it holds true that 
$\mathcal A_{({\mathcal G}_\mathcal A)}=\mathcal A$.
\end{enumerate}
\end{lemma}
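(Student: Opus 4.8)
The plan is to prove claims~(1) and~(2) layer by layer, reducing in each case to one of the three correspondences already at our disposal: $\iota$ between cancellative odd involutive FL$_e$-chains and totally ordered abelian groups (Lemma~\ref{inducedDEF}), the shifts ${}_{\contour{black}{$\downarrow$}}$ and ${}_{\contour{black}{$\uparrow$}}$ between the discretely ordered cancellative odd and even cases (Lemma~\ref{PROdownshift}), and $Sp$ versus $\pi_1,\pi_2$ between even FL$_e$-algebras with idempotent falsum and pairs (odd algebra, cancellative subalgebra) (Theorem~\ref{sPliT}). For claim~(1): when $u\in\Omega$ the layer $\mathbf X_u$ is already cancellative and odd, so $\iota(\mathbf X_u)$ is a totally ordered abelian group; when $u\in\Psi$, Lemma~\ref{PROdownshift}(2) turns $\mathbf X_u$ into a discretely ordered cancellative odd chain ${\mathbf X_u}_{\contour{black}{$\uparrow$}}$, whence $\iota({\mathbf X_u}_{\contour{black}{$\uparrow$}})$ is discrete, which is~(\ref{DiSCRetE}); when $u\in\Theta$, the condition $\gteu{x}{\negaM{u}{x}}=\negaM{u}{u}$ of~(\ref{KiKiLesz}) is precisely the hypothesis of Theorem~\ref{sPliT}(2)(\ref{HaAkkorKancellativ}), so $\pi_1(\mathbf X_u)$ is cancellative and $\iota(\pi_1(\mathbf X_u))$ is a totally ordered abelian group with $\iota(\pi_2(\mathbf X_u))$ a subgroup. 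For claim~(2) one runs each of these in reverse: for $u\in\Psi$ discreteness of $\textbf{\textit{G$_u$}}$ in~(\ref{DiSCRetE}) feeds Lemma~\ref{PROdownshift}(1); for $u\in\Theta$, Theorem~\ref{sPliT}(1)(\ref{HkjasHHLH}) produces an even algebra with idempotent falsum from $Sp(\iota(\textbf{\textit{G$_u$}}),\iota(\textbf{\textit{H$_u$}}))$, and Theorem~\ref{sPliT}(1)(\ref{AzAtloAzFalsum}), applicable since $\iota(\textbf{\textit{G$_u$}})$ is cancellative, delivers the diagonal identity that places it in the $\Theta$-case of~(\ref{KiKiLesz}). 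In all of this the remaining checks per layer --- that it is a chain, that it is involutive, and that it has the prescribed parity and idempotency of the falsum --- are immediate from the cited statements.

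The substance lies in transporting the connecting homomorphisms, and two facts carry it, both already available. First, every $x\in X_u$ satisfies $\gteu{x}{\negaM{u}{x}}=\negaM{u}{u}$: this is part of~(\ref{KiKiLesz}) when $u\in\Theta$ and follows from cancellativity otherwise, as was observed inside the proof of Lemma~\ref{BUNCHalg_X} (claim~(C1) there). Second, $\rho^{u\to v}$ preserves products and, by~(A2) together with $\rho^{u\to v}(u)=v$, sends $\negaM{u}{u}$ to the unit $v$ of $\mathbf X_v$. In the direction $\mathcal A\mapsto\mathcal G_{\mathcal A}$ I put $\varsigma^{u\to v}=\rho^{u\to v}|_{G_u}$; applying $\rho^{u\to v}$ to $\gteu{x}{\negaM{u}{x}}=\negaM{u}{u}$ shows $\rho^{u\to v}(x)$ is invertible in $\mathbf X_v$ with inverse $\rho^{u\to v}(\negaM{u}{x})$, so $\rho^{u\to v}(X_u)\subseteq H_v$ whenever $v\in\Theta$ --- this is (G3), and together with $\rho^{u\to v}(X_u)\subseteq X_v$ for $v\notin\Theta$ it shows $\varsigma^{u\to v}$ maps $G_u$ into $G_v$. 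For $u\in\Theta$ the group product of $\textbf{\textit{G$_u$}}=\iota(\pi_1(\mathbf X_u))$ is $x\cdot_u y=h_u(\gteu{x}{y})$, differing from $\teu$ exactly on the doubled elements $\accentset{\bullet}  H_u$; but the second fact yields the identity $\rho^{u\to v}\circ h_u=\rho^{u\to v}$ (check it on $\accentset{\bullet}  m=\gteu{m}{\negaM{u}{u}}$, $m\in H_u$), and this makes $\varsigma^{u\to v}$ preserve $\cdot_u$. Preservation of the unit is $\rho^{u\to v}(u)=v$, preservation of the order is inherited from $\rho^{u\to v}$, and (G1), (G2) follow at once from (A1), (A2). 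For the converse direction one checks that $\varsigma^{u\to v}$ --- precomposed with $h_u$ when $u\in\Theta$ --- is a homomorphism of residuated-lattice reducts into $\mathbf X_v$; when $v\in\Theta$ this uses (G3), i.e.\ that the image lands in $\mathbf H_v$, on which the $Sp$-operations restrict to the group operations of $\textbf{\textit{H$_v$}}$; then (A1), (A2) drop out of (G1), (G2) using $h_u|_{G_u}=\mathrm{id}$ and, for $u\in\Theta$, again the identity above.

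Claim~(3) is then bookkeeping. Layer by layer one composes the round-trip identities $\iota(\iota(\cdot))=\cdot$ (Lemma~\ref{inducedDEF}(3)), $(\cdot_{\contour{black}{$\downarrow$}})_{\contour{black}{$\uparrow$}}=\cdot$ and $(\cdot_{\contour{black}{$\uparrow$}})_{\contour{black}{$\downarrow$}}=\cdot$ (Lemma~\ref{PROdownshift}(3)), and $\mathbf Y=Sp(\pi_1(\mathbf Y),\pi_2(\mathbf Y))$ together with the uniqueness in Theorem~\ref{sPliT}(2)(\ref{VisszIsMEGyeget}) (so $\pi_1,\pi_2$ invert $Sp$), to recover the layer groups, their distinguished subgroups, and the layer algebras on the nose. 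Since $G_u=X_u$ for $u\notin\Theta$, while $h_u|_{G_u}=\mathrm{id}$ and $\rho^{u\to v}\circ h_u=\rho^{u\to v}$ for $u\in\Theta$, the connecting homomorphisms are recovered as well, giving $\mathcal A_{({\mathcal G}_{\mathcal A})}=\mathcal A$ and $\mathcal G_{({\mathcal A}_{\mathcal G})}=\mathcal G$.

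I expect the one genuine difficulty to be the $\Theta$-case of the homomorphism transport: keeping straight that $\textbf{\textit{G$_u$}}=\iota(\pi_1(\mathbf X_u))$ carries a product that disagrees with $\teu$ precisely on $\accentset{\bullet}  H_u$, and that $\rho^{u\to v}$ nonetheless restricts to a group homomorphism exactly because it collapses each $\accentset{\bullet}  m$ onto the $\rho^{u\to v}$-image of $m$ --- the identity $\rho^{u\to v}\circ h_u=\rho^{u\to v}$ --- while, being product-preserving and unit-sending by~(A2), it forces the whole layer $X_u$ into the invertible part $H_v$ over any later $v\in\Theta$, which is what (G3) demands. Everything else is a routine, if lengthy, verification.
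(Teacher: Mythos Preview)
Your proposal is correct and follows essentially the same route as the paper: layer-by-layer reduction via Lemma~\ref{inducedDEF}, Lemma~\ref{PROdownshift}, and Theorem~\ref{sPliT}, with (G1)--(G3) and (A1)--(A2) derived from each other through the identities $h_u|_{G_u}=\mathrm{id}$ and $\rho^{u\to v}\circ h_u=\rho^{u\to v}$, and claim~(3) handled by composing the round-trip identities of those three lemmas. The one place where you are actually more explicit than the paper is the $\Theta$-case of showing that $\varsigma^{u\to v}=\rho^{u\to v}|_{G_u}$ is a \emph{group} homomorphism: the paper verifies (G1)--(G3) but leaves implicit why the restriction preserves the product $\cdot_u=h_u(\,\cdot\teu\cdot\,)$ of $\iota(\pi_1(\mathbf X_u))$, which can differ from $\teu$; your identity $\rho^{u\to v}\circ h_u=\rho^{u\to v}$ (proved in the paper only later, as~(\ref{RHOugyanodaVISZI}) inside claim~(3)) together with (G3) is exactly what closes that gap.
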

\begin{proof}
(1):
For $u\in\kappa$, \textbf{\textit{G$_u$}} defined by (\ref{DEFcsopi}) is an abelian $o$-group.
Indeed, being totally ordered is granted since so is the original algebra. 
If $u\in\kappa_o$ then see (\ref{KiKiLesz}) and Lemma~\ref{inducedDEF}, 
if $u\in\kappa_J$ then see (\ref{KiKiLesz}) and Lemmas~\ref{PROdownshift} and \ref{inducedDEF}, it also confirms (\ref{DiSCRetE}), whereas if $u\in\kappa_I$ then see  (\ref{KiKiLesz}), Theorem~\ref{sPliT} and Lemma~\ref{inducedDEF}. 
The $\varsigma$'s defined in (\ref{DEFvarsigma}) satisfy
\begin{itemize}
\item[(G1):] 
for $x\in G_u$,
$
\varsigma_{u\to u}(x)
\overset{(\ref{DEFvarsigma})}{=}
\rho_{u\to u}(x)
\overset{(A1)}{=}
x
$
and
$
(\varsigma_{v\to w}\circ\varsigma_{u\to v})(x)
\overset{(\ref{DEFvarsigma})}{=}
(\rho_{v\to w}\circ\rho_{u\to v})(x)
\overset{\rm{(A1)}}{=}
\rho_{u\to w}(x)
\overset{(\ref{DEFvarsigma})}{=}
\varsigma_{u\to w}(x)
$.

\item[(G2):] 
Let $u\in\kappa_J$. 
Then
$
\varsigma_{u\to v}(u_{\downarrow_u})
\overset{(\ref{DEFvarsigma})}{=}
\rho_{u\to v}(u_{\downarrow_u})
\overset{(\ref{KiKiLesz})}{=}
\rho_{u\to v}(\negaM{u}{u})
\overset{\rm{(A2)}}{=}
\rho_{u\to v}(u)
\overset{(\ref{DEFvarsigma})}{=}
\varsigma_{u\to v}(u)
$.

\item[(G3):] 
Let $v\in\kappa_I$.
Then $\mathbf X_v=Sp(\pi_1(\mathbf X_v),\pi_2(\mathbf X_v))$ holds by Theorem~\ref{sPliT}.
For $x\in G_u$, 
$\varsigma_{u\to v}(x)
\overset{(\ref{DEFvarsigma})}{=}
\rho_{u\to v}(x)
\in X_v
$, and
by claim~(C1) in the proof of claim~(2) of Lemma~\ref{BUNCHalg_X}, $\rho_{u\to v}(x)$ is invertible in $\mathbf X_v$.
Therefore, $\rho_{u\to v}(x)$ is an element of
$
\pi_2(\mathbf X_v)
\overset{(\ref{DEFcsopiH})}{=}
\mathbf H_u
$
by claim~(\ref{UNICITYofH}) in Theorem~\ref{sPliT}.
\end{itemize}

\medskip
(2):
$\mathbf X_u$ defined in (\ref{GroupToAlgebra}) 
is an involutive FL$_e$-chain satisfying (\ref{KiKiLesz}):
if $u\in\kappa_o$ then see Lemma~\ref{inducedDEF}, 
if $u\in\kappa_J$ then see Lemmas~\ref{inducedDEF} and \ref{PROdownshift}, 
if $u\in\kappa_I$ then see Lemma~\ref{inducedDEF} and Theorem~\ref{sPliT}.
\\
The $\rho$'s defined in (\ref{DEFrho}) are well defined since $X_u=G_u$ holds by (\ref{GroupToAlgebra}) if $u\notin\kappa_I$,
and if $u\in\kappa_I$ then $h_u$ maps to the universe of $\iota(\textbf{\textit{G$_u$}})$ (see (\ref{canHOMmegint})), which is 
$G_u$. 
\\
The $\rho$'s 
are residuated lattice homomorphisms,
since the (totally ordered group) homomorphisms $\varsigma_{u\to v}$ from 
$\textbf{\textit{G$_u$}}$ to $\textbf{\textit{G$_v$}}$
naturally extend to homomorphisms from the residuated lattice reduct of
$\iota(\textbf{\textit{G$_u$}})$ to 
the residuated lattice reduct of $\iota(\textbf{\textit{G$_v$}})$
via claim~(\ref{CsopToAlg}) of Lemma~\ref{EzAzInverz}, and hence $\rho_{u\to v}$ can be regarded as the composition of residuated lattice homomorphisms ($\varsigma$'s and $h$'s).
\\
The $\rho$'s 
satisfy 

(A1):
Notice that for $u<v$,
\begin{equation}\label{HOVArho}
\mbox{$\rho_{u\to v}$ maps $X_u$ to $G_v$,}
\end{equation}
since so does $\varsigma_{u\to v}$.
Over $X_u$, 
$$
\rho_{u\to u}
\overset{(\ref{DEFrho})}{=}
\left\{
\begin{array}{ll}
\varsigma_{u\to u}\overset{(G1)}{=}
id_{G_u}
\overset{(\ref{GroupToAlgebra})}{=}
id_{X_u} & \mbox{if $u\notin\kappa_I$}\\
id_{X_u} & \mbox{if $v=u\in\kappa_I$}\\
\end{array}
\right. .
$$
Therefore, it suffices to prove the other condition in (A1) for $u<v<w$ only:
\\
$
\rho_{v\to w}\circ\rho_{u\to v}
\overset{(\ref{DEFrho})}{=}
\left\{
\begin{array}{ll}
\varsigma_{v\to w}\circ\rho_{u\to v} & \mbox{if $v\notin\kappa_I$}\\
\varsigma_{v\to w}\circ h_v\circ\rho_{u\to v}
\overset{(\ref{HOVArho}) \, (\ref{canHOM})}{=}
\varsigma_{v\to w}\circ\rho_{u\to v}
& \mbox{if $v\in\kappa_I$}\\
\end{array}
\right\}
=
\varsigma_{v\to w}\circ\rho_{u\to v}
\overset{(\ref{DEFrho})}{=}
\left\{
\begin{array}{ll}
\varsigma_{v\to w}\circ\varsigma_{u\to v}
\overset{\rm{(G1)}}{=}
\varsigma_{u\to w}
& \mbox{if $u\notin\kappa_I$}\\
\varsigma_{v\to w}\circ\varsigma_{u\to v}\circ h_u 
\overset{\rm{(G1)}}{=}
\varsigma_{u\to w}\circ h_u 
& \mbox{if $u\in\kappa_I$}\\
\end{array}
\right\}
\overset{(\ref{DEFrho})}{=}
\rho_{u\to w}
.
$

(A2):
For $u\notin\kappa_o$ we have already seen that 
\begin{equation}\label{EveNNN}
\mathbf X_u \mbox{ is even} 
,
\end{equation}
therefore,
$
\rho_{u\to v}(\negaM{u}{u})
\overset{(\ref{DEFrho})}{=}
$
$$
\footnotesize
\left\{
\begin{array}{ll}
\varsigma_{u\to v}(\negaM{u}{u})
\overset{(\ref{EveNNN})}{=}
\varsigma_{u\to v}(u_{\downarrow_u})\overset{\rm{(G2)}}{=}
\varsigma_{u\to v}(u)
\overset{(\ref{DEFrho})}{=}
\rho_{u\to v}(u)
& \mbox{if $u\in\kappa_J$,}\\
(\varsigma_{u\to v}\circ h_u)(\negaM{u}{u})
\overset{(\ref{canHOM})}{=}
\varsigma_{u\to v}(\negaM{u}{u}_\uparrow)
\overset{(\ref{EveNNN})}{=}
\varsigma_{u\to v}(u)
\overset{(\ref{canHOM})}{=}
(\varsigma_{u\to v}\circ h_u)(u)
\overset{(\ref{DEFrho})}{=}
\rho_{u\to v}(u)
& \mbox{if $u\in\kappa_I$.}\\
\end{array}
\right.
$$

\bigskip
(3):
If $u=\kappa_o$ then $\lambda(\iota(\textbf{\textit{G$_u$}}))=\textbf{\textit{G$_u$}}$ and $\iota(\lambda(\mathbf X_u))=\mathbf X_u$ by Lemma~\ref{inducedDEF}.
If $u=\kappa_J$ then $\lambda\left({\iota(\textbf{\textit{G$_u$}})_{\contour{black}{$_\downarrow$}{\contour{black}{$_\uparrow$}}}}\right)=\lambda(\iota(\textbf{\textit{G$_u$}}))=\textbf{\textit{G$_u$}}$
and
$
\iota\left(\lambda\left({\mathbf X_u}_{\contour{black}{$_\uparrow$}}\right)\right)_{\contour{black}{$_\downarrow$}}
=
{\mathbf X_u}_{\contour{black}{$_\uparrow$}{\contour{black}{$_\downarrow$}}}=\mathbf X_u
$
follow from Lemma~\ref{PROdownshift} and Lemma~\ref{inducedDEF}.
In these two cases $G_u=X_u$, thus it is obvious from (\ref{DEFrho}) and (\ref{DEFvarsigma}) that 
$\mathcal G_{({\mathcal A}_\mathcal G)}$ and $\mathcal G$ have the same homomorphisms (the same $\varsigma$'s) from the $u^{\rm th}$-layer, and that 
$\mathcal A_{({\mathcal G}_\mathcal A)}$ and $\mathcal A$ have the same homomorphisms (the same $\rho$'s) from the $u^{\rm th}$-layer.
If $u\in\kappa_I$ then
$
\lambda(\pi_1(Sp(\iota(\textbf{\textit{G$_u$}}),\iota(\textbf{\textit{H$_u$}}))))
=
\lambda(\iota(\textbf{\textit{G$_u$}}))
=
\textbf{\textit{G$_u$}}
,
$
$
\lambda(\pi_2(Sp(\iota(\textbf{\textit{G$_u$}}),\iota(\textbf{\textit{H$_u$}}))))
=
\lambda(\iota(\textbf{\textit{H$_u$}}))
=
\textbf{\textit{H$_u$}}
$
and
$
Sp(\iota(\lambda(\pi_1(\mathbf X_u))),\iota(\lambda(\pi_2(\mathbf X_u))))
=
Sp(\pi_1(\mathbf X_u),\pi_2(\mathbf X_u))
=
\mathbf X_u
$
follow from Theorem~\ref{sPliT} and Lemma~\ref{inducedDEF}.
As for the homomorphisms, $h_u$ maps $X_u$ to $G_u$ by (\ref{DEFcsopi}),
hence the composition $\varsigma_{u\to v}\circ h_u$ is well-defined.
By the construction in Definition~\ref{SubgroupSplitREV},
$G_u$ can also be regarded as a subset of $X_u$, and $h_u$ is the identity mapping on $G_u$ by (\ref{canHOMmegint}).
Therefore, 
$(\varsigma_{u\to v}\circ h_u)|_{G_u}
=
\varsigma_{u\to v}
$
holds on the one hand.
On the other hand, to prove $\rho_{u\to v}|_{G_u}\circ h_u=\rho_{u\to v}$, first notice that 
for $x\in X_u\setminus G_u$,
\begin{equation}\label{RHOugyanodaVISZI}
\rho_{u\to v}(x_{\uparrow_u})=\rho_{u\to v}(x)
.
\end{equation}
Indeed, 
$
\rho_{u\to v}(x_{\uparrow_u})
=
\rho_{u\to v}(\gteu{x_{\uparrow_u}}{u})
\overset{(\ref{RHOhomo})}{=}
\gtev{\rho_{u\to v}(x_{\uparrow_u})}{\rho_{u\to v}(u)}
\overset{(A2)}{=}
\gtev{\rho_{u\to v}(x_{\uparrow_u})}{\rho_{u\to v}(\negaM{u}{u})}
\overset{X_u\ is\ even,\ see\ (\ref{KiKiLesz})}{=}
\gtev{\rho_{u\to v}(x_{\uparrow_u})}{\rho_{u\to v}(u_{\downarrow_u})}
\overset{(\ref{RHOhomo})}{=}
\rho_{u\to v}(\gteu{x_{\uparrow_u}}{u_{\downarrow_u}})
\overset{(\ref{ProdSplit})}{=}
\rho_{u\to v}(x_{{\uparrow_u}{\downarrow_u}})
=
\rho_{u\to v}(x)
$.
Therefore, 
$
(\rho_{u\to v}|_{G_u}\circ h_u)(x)
\overset{(\ref{canHOM})}{=}
$
$$
\overset{(\ref{canHOM})}{=}
\left\{
\begin{array}{ll}
(\rho_{u\to v}|_{G_u})(x)
=
\rho_{u\to v}(x)
& \mbox{ if $x\in G_u$}\\
(\rho_{u\to v}|_{G_u})(x_{\uparrow_u})
\overset{x_{\uparrow_u}\in H_u\subseteq G_u}{=}
\rho_{u\to v}(x_{\uparrow_u})
\overset{(\ref{RHOugyanodaVISZI})}{=}
\rho_{u\to v}(x)
& \mbox{ if $x\in X_u\setminus G_u$}\\
\end{array}
\right. .
$$
\end{proof}

\section{The representation theorem}\label{NaVegre}
The main theorem of the paper is a representation theorem of odd or even involutive FL$_e$-chains by bunches of layer groups. 
Lemmas~\ref{BUNCHalg_X} and \ref{BUNCHalg_BUNCHgroup} prove
Theorem~\ref{mainTheoREm}. The second part of Theorem~\ref{mainTheoREm} presents the direct constructional correspondence between odd or even involutive FL$_e$-chains and bunches of layer-groups, that is, one without referring to the intermediate explanatory step of layer algebras.

\begin{theorem}\label{mainTheoREm}
For every odd or even involutive FL$_e$-chain $\mathbf X$ there exists a unique bunch of layer groups $\mathcal G$ such that 
$\mathbf X$ is the involutive FL$_e$-chain derived from the bunch of layer algebras derived from $\mathcal G$, in notation, $\mathbf X=\mathcal X_{{\mathcal A}_{\mathcal G}}$.
Conversely, for every bunch of layer groups $\mathcal G$ there exists a unique odd or even involutive FL$_e$-chain $\mathbf X$ such that $\mathcal G$ is the bunch of layer groups derived from the bunch of layer algebras of $\mathbf X$, in notation, $\mathcal G=\mathcal G_{{\mathcal A}_{\mathbf X}}$. 
In more details:
\begin{enumerate}[label={\bf (A)}]
\item\label{errefere}
Given an odd or an even involutive FL$_e$-chain $\mathbf X=(X,\leq,\teALONE,\ite{\te},t,f)$ with residual complement operation $\komp$,
$$
\mathcal G_{\mathbf X}=\langle \textbf{\textit{G$_u$}},\textbf{\textit{H$_u$}}, \varsigma_{u\to v} \rangle_{\langle \kappa_o, \kappa_J, \kappa_I,\leq_\kappa\rangle}
$$
is bunch of layer groups, called the {\em bunch of layer groups of $\mathbf X$},
where
$$
\kappa=\{\res{\te}{x}{x} : x\in X\}=
\{u\geq t : u \mbox{ is idempotent} \} 
\mbox{ is ordered by $\leq$,}
$$
 $$
\bar\kappa_I=\{u\in \kappa\setminus\{t\} : \nega{u} \mbox{ is idempotent}\},
$$
$$
\bar\kappa_J=\{u\in \kappa\setminus\{t\} : \nega{u} \mbox{ is not idempotent}\},
$$
$\kappa_o$, $\kappa_J$, $\kappa_I$ are defined by Table~\ref{ThetaPsiOmegaAGAIN},
\begin{table}[h]
\begin{center}
\caption{}
\label{ThetaPsiOmegaAGAIN}
\begin{tabular}{c|c|c|cll}
$\kappa_o$ & $\kappa_J$ & $\kappa_I$ & \\
\hline
\{t\} & $\bar\kappa_J$ & $\bar\kappa_I$ & if $\mathbf X$ is odd\\
\hline
$\emptyset$ & $\bar\kappa_J\cup\{t\}$ & $\bar\kappa_I$ & if $\mathbf X$ is even and $f$ is not idempotent \\
\hline
$\emptyset$ & $\bar\kappa_J$ & $\bar\kappa_I\cup\{t\}$ & if $\mathbf X$ is even and $f$ is idempotent\\
\hline
\end{tabular}
\end{center}
\end{table}

\noindent
$$
\begin{array}{llll}
\textbf{\textit{G$_u$}}&=& (G_u,\leq,\teALONE,\ { }^{-1},u) & \mbox{if $u\notin\kappa_I$,}\\
\textbf{\textit{G$_u$}}&=& (G_u,\leq,\teALONE_u,\ { }^{-1},u) & \mbox{if $u\in\kappa_I$,}\\
\textbf{\textit{H$_u$}}&=& (H_u,\leq,\teALONE,\ { }^{-1},u) & \mbox{if $u\in\kappa_I$,}\\
\end{array}
$$
where 
$X_u=\{x\in X : \res{\te}{x}{x}=u\}$, 
$H_u=\{x\in X_u : \g{x}{\nega{u}}<x\}=
\{x\in X_u : x \mbox{ is $u$-invertible in $X_u$}\footnote{There exists $y\in X_u$ such that $\g{x}{y}=u$.}\},
$,
$\accentset{\bullet}H_u=\{\g{x}{\nega{u}} : x\in H_u\}$,
\begin{equation*}\label{DEFcsopi2}
G_u=\left\{
\begin{array}{ll}
X_u & \mbox{if $u\notin\kappa_I$}\\
X_u\setminus H_u^\bullet & \mbox{if $u\in\kappa_I$}\\
\end{array}
\right.
\end{equation*}
$$
 x\mathbin\teALONE_u y=\res{\te}{(\res{\te}{\g{x}{y}}{u})}{u}
$$
\begin{equation*}\label{EzLeSzainVerZ}
x^{-1}=\res{\te}{x}{u},
\end{equation*} 

and for $u,v\in\kappa$ such that $u\leq v$, $\varsigma_{u\to v} : G_u\to G_v$ is defined by
\begin{equation*}\label{IgYszoRZUnk}
\varsigma_{u\to v}(x)=
\g{v}{x}
.
\end{equation*}
\end{enumerate}

\bigskip
\begin{enumerate}[label={\bf (B)}]
\item\label{BLGtoX}

Given a bunch of layer groups
$
\mathcal G=\langle \textbf{\textit{G$_u$}},\textbf{\textit{H$_u$}}, \varsigma_{u\to v} \rangle_{{\langle \kappa_o, \kappa_J, \kappa_I, \leq_\kappa\rangle}}
$
with $\textbf{\textit{G$_u$}}=(G_u,\preceq_u,\cdot_u,\ { }^{-1_u},u)$
$$
\mathbf X_{\mathcal G}=(X,\leq,\teALONE,\ite{\te},t,\nega{t})
$$
is an involutive FL$_e$-chain with residual complement $\komp$,
called the {\em involutive FL$_e$-chain of $\mathcal X$},
where
$\kappa=\kappa_o\cup\kappa_J\cup\kappa_I$,
for $u\in\kappa$,
\begin{equation*}\label{IkszU}
X_u=\left\{
\begin{array}{ll}
G_u & \mbox{ if $u\not\in\kappa_I$},\\
G_u\,
\cup\, H_u^\bullet & \mbox{ if $u\in\kappa_I$},\\
\end{array}
\right. 
\end{equation*}
(where $H_u^\bullet=\{{h^\bullet} : h\in H_u\}$ is a copy of $H_u$ which is disjoint from $G_u$), \begin{equation*}\label{EZazX2}
X=\displaystyle\dot\bigcup_{u\in \kappa}X_u
,
\end{equation*}
if $u\notin\kappa_I$ then $\leq_u\,=\,\preceq_u$, 
if $u\in\kappa_I$ then 
$\leq_u$ extends $\preceq_u$ to $X_u$ 
by letting 
\begin{equation*}\label{KibovitettRendezesITTIS}
\mbox{
$a^\bullet<_u b^\bullet$ and $x<_u a^\bullet<_uy$
if $a,b\in H_u$, $x,y\in G_u$, $a\prec_u b$, $x\prec_u a\preceq_u y$,
}
\end{equation*}
for $v\in\kappa$, $\rho_v : X\to X$ is defined by
\begin{equation*}\label{P52}
\begin{array}{lll}
\rho_v(x)&=&
\left\{
\begin{array}{ll}
\varsigma_{u\to v}(x) & \mbox{ if $x\in G_u$ and $u<_\kappa v$},\\
x & \mbox{ if $x\in G_u$ and $u\geq_\kappa v$},\\
\end{array}
\right. 
\\
\rho_v(\accentset{\bullet}x)&=&
\left\{
\begin{array}{ll}
\varsigma_{u\to v}(x) & \mbox{ if $x^\bullet\in H_u^\bullet$ and $\kappa_I\ni u<_\kappa v$},\\
x^\bullet & \mbox{ if $x^\bullet\in H_u^\bullet$ and $\kappa_I\ni u\geq_\kappa v$},\\
\end{array}
\right. 
\end{array}
\end{equation*}
by denoting for $u,v\in\kappa$, $uv=\max_\kappa(u,v)$,
for $x\in X_u$ and $y\in X_v$,
\begin{equation*}\label{RendeZesINNOVATIVAN}
\mbox{$x<y$ iff $\rho_{uv}(x)<_{uv}\rho_{uv}(y)$ or $\rho_{uv}(x)=\rho_{uv}(y)$ and $u<_\kappa v$,}\\
\end{equation*}
for $u\in\kappa_I$, $h_u : X_u \to G_u$, 
\begin{equation*}\label{canHOM2}
\begin{array}{ll}
h_u(x)=
x & \mbox{ if $x\in G_u$,}\\
h_u(x^\bullet)=
x & \mbox{ if $x^\bullet\in H_u^\bullet$,}\\
\end{array}
\end{equation*}
for $x,y\in X_u$,
\begin{equation*}\label{uPRODigy}
{x}\mathbin{\bigcdot_u}{y}=\left\{
\begin{array}{ll}
{\left({h_u(x)}\cdot_u{h_u(y)}\right)}^\bullet	& \mbox{ if $u\in\kappa_I$, $\gteu{h_u(x)}{h_u(y)}\in H_u$ and $\neg(x,y\in H_u)$}\\
{h_u(x)}\cdot_u{h_u(y)}		& \mbox{ if $u\in\kappa_I$, $\gteu{h_u(x)}{h_u(y)}\notin H_u$ or $x,y\in H_u$}\\
x\cdot_u y& \mbox{ if $u\notin\kappa_I$}\\
\end{array}
\right. ,
\end{equation*}
for $x\in X_u$ and $y\in X_v$,
\begin{equation*}\label{EgySzeruTe2}
\g{x}{y}={\rho_{uv}(x)}\mathbin{\bigcdot_{uv}}{\rho_{uv}(y)},
\end{equation*}
for $x\in X$,
\begin{equation*}\label{SplitNega2}
\begin{array}{lll}
\nega{(x^\bullet)}&=&\left\{ 
\begin{array}{ll}
x^{-1_u}		& \mbox{ \ \ \ if $u\in\kappa_I$ and $x^\bullet\in H_u^\bullet$}\\
\end{array}
\right. \\
\nega{x}&=&\left\{
\begin{array}{ll}
{\left(x^{-1_u}\right)}^\bullet	& \mbox{ if $u\in\kappa_I$ and $x\in H_u$}\\
x^{-1_u}	& \mbox{ if $u\in\kappa_I$ and $x\in G_u\setminus H_u$}\\
x^{-1_u}		& \mbox{ if $u\in\kappa_o$ and $x\in G_u$}\\
{x^{-1_u}}_\downarrow		& \mbox{ if $u\in\kappa_J$ and $x\in G_u$}\\
\end{array}
\right. ,
\end{array}
\end{equation*}
for $x,y\in X$,
\begin{equation*}\label{IgYaReSi2}
\res{\te}{x}{y}=\nega{(\g{x}{\nega{y}})},
\end{equation*}
$\ite{\te}$ is the residual operation of $\teALONE$,
$$
\mbox{
$t$ is the least element of $\kappa$,
}
$$
\begin{equation*}\label{tLESZaz}
\mbox{
$f$ is the residual complement of $t$,
}
\end{equation*}
and is given by
$$
\begin{array}{lll}
\nega{t}&=&\left\{
\begin{array}{ll}
{(t^{-1_t})}^\bullet	& \mbox{ if $u\in\kappa_I$}\\
t^{-1_t}		& \mbox{ if $u\in\kappa_o$}\\
{t^{-1_t}}_\downarrow		& \mbox{ if $u\in\kappa_J$}\\
\end{array}
\right. .
\end{array}
$$
In addition, 
\begin{equation*}\label{RhOLESzEZwww}
\rho_v(x)=\g{v}{x} \ \ \mbox{for $v\in\kappa$ and $x\in X$},
\end{equation*}
$\mathbf X_{\mathcal X}$ is odd if $t\in\kappa_o$, 
even with a non-idempotent falsum if $t\in\kappa_J$, and 
even with an idempotent falsum if $t\in\kappa_I$.
\end{enumerate}

\begin{enumerate}[start=1,label={\bf (C)}]
\item
Items~\ref{errefere} and {\bf\ref{BLGtoX}} describe a one-to-one correspondence between the class containing all odd and all even involutive FL$_e$-chains and the class of bunches of layer groups:
given a bunch of layer groups $\mathcal X$ it holds true that 
$\mathcal X_{({\mathbf X}_\mathcal X)}=\mathcal X$, and
given an odd or even involutive FL$_e$-chain $\mathbf X$ it holds true that $\mathbf X_{(\mathcal X_\mathbf X)}\simeq\mathbf X$\footnote{\label{ModifiCaTO}
If Definition~\ref{DEFbunch} is slightly modified in such a way that the
$\accentset{\bullet}{\textbf{\textit{H$_u$}}}$'s
are \lq\lq stored\rq\rq\ in the definition of a bunch
(like ${\mathbf X}=\langle \textbf{\textit{G$_u$}},\textbf{\textit{H$_u$}},\accentset{\bullet}{\textbf{\textit{H$_u$}}}, \varsigma_{u\to v} \rangle_{\langle \kappa_o, \kappa_J, \kappa_I, \leq_\kappa\rangle}$),
and 
instead of taking a copy $\accentset{\bullet}{H}_u$ of $H_u$, that stored copy is used in the construction of Theorem~\ref{mainTheoREm}/\ref{errefere},
then also
$\mathbf X_{(\mathcal X_\mathbf X)}=\mathbf X$
holds.
Then, Theorem~\ref{mainTheoREm} describes a bijection, in a constructive manner, between the classes of odd or even involutive FL$_e$-chains and the class of bunches of layer groups.}.
\qed
\end{enumerate}
\end{theorem}

\begin{example}
We present the bunch representation of a few known structures, among which are the two extremal classes 
(abelian $o$-groups and odd Sugihara chains)
mentioned in the introduction. 
Denote by $\mathbbm 1$ the trivial (one-element) group.
\begin{itemize}
\item 
If \textbf{\textit{G}} is an abelian $o$-group then
$\textbf{\textit{G}}=\mathbf X_{\mathcal G}$
where
$${\mathcal G}=\langle \textbf{\textit{G}}, \emptyset, \emptyset \rangle_{\langle\{t\},\emptyset,\emptyset,\leq_\kappa\rangle}.$$
\item 
Even Sugihara chains are exactly the algebras
$\mathbf X_{\mathcal G}$,
where 
$${\mathcal G}=\langle \mathbbm 1_u, \mathbbm 1_u, \varsigma_{u\to v} \rangle_{\langle\emptyset,\emptyset,\kappa,\leq_\kappa\rangle}.$$
\item 
Odd Sugihara chains are exactly the algebras
$\mathbf X_{\mathcal G}$,
where 
$${\mathcal G}=\langle \mathbbm 1_u, \mathbbm 1_u, \varsigma_{u\to v} \rangle_{\langle\{t\},\emptyset,\kappa\setminus\{t\},\leq_\kappa\rangle}.$$
\item
Finite partial sublex products of abelian $o$-groups 
have been shown in \cite{JS_Hahn,Jenei_Hahn_err} to be exactly those odd involutive FL$_e$-chains which have finitely many positive idempotent elements. These are exactly the algebras $\mathbf X_{\mathcal G}$, where $\kappa$ is finite in 
$$
{\mathcal G}=\langle \textbf{\textit{G$_u$}},\textbf{\textit{H$_u$}}, \varsigma_{u\to v} \rangle_{\langle\{t\},\bar\kappa_J,\bar\kappa_I,\leq_\kappa\rangle}.
$$

\item
Algebras which can be constructed by the involutive ordinal sum construction of \cite{InvOrdSum} are exactly the algebras $\mathbf X_{\mathcal G}$, where 
$$
{\mathcal G}=\langle \textbf{\textit{G$_u$}}, \mathbbm 1_u, \varsigma_{u\to v} \rangle_{\langle\{t\},\emptyset,\kappa\setminus\{t\},\leq_\kappa\rangle}.$$
By (G3), also the homomorphisms are trivial.
\item
Algebras which can be constructed by the consecutive application of the ordinal sum construction as defined in  \cite{Galatos:2005kh} are exactly the algebras $\mathbf X_{\mathcal G}$, where $\kappa$ is finite 
in 
$${\mathcal G}=\langle \textbf{\textit{G$_u$}}, \mathbbm 1_u, \varsigma_0^{u\to v} \rangle_{\langle\{t\},\emptyset,\kappa\setminus\{t\},\leq_\kappa\rangle}.$$
By (G3), also the homomorphisms are trivial.
\end{itemize}
\end{example}

\begin{remark}
If $\mathbf X$ is densely ordered in Theorem~\ref{mainTheoREm} then the \textbf{\textit{H$_u$}}'s in the  representation 
${\mathcal G}=\langle \textbf{\textit{G$_u$}},\textbf{\textit{H$_u$}}, \varsigma_{u\to v} \rangle_{\boldsymbol\kappa}$ of $\mathbf X$ are uniquely determined by the rest of $\mathcal G$.
Therefore, if $\mathbf X$ is densely ordered then the representation of $\mathbf X$ by layer groups can be written in a simpler form of
$\langle \textbf{\textit{G$_u$}}, \varsigma_{u\to v} \rangle_{\langle \kappa_o, \kappa_J, \kappa_I,\leq_\kappa\rangle}$.
To prove it we state that for $u\in\kappa_I$, 
$$
H_u=\displaystyle\bigcup_{\kappa\ni s<_\kappa u}\varsigma_{s\to u}(G_s).
$$
Indeed, 
$
\textit{H$_u$}\supseteq\bigcup_{\kappa\ni s<_\kappa u}\varsigma_{s\to u}(\textit{G$_s$})
$
follows from (G3). 
Since $u\in\kappa_I$, and since $\boldsymbol\kappa$ is the same in $\mathcal G_{{\mathcal A}_\mathbf X}$ and in ${\mathcal A}_\mathbf X$, 
the $u^{\rm th}$-layer algebra $\mathbf X_u$ of $\mathbf X$ has idempotent falsum constant $\nega{u}$ by (\ref{KiKiLesz}).
Therefore, by Theorem~\ref{sPliT}, $\mathbf X_u=Sp(\pi_1(\mathbf X_u), \pi_2(\mathbf X_u))$, where $\pi_2(\mathbf X_u)=\textbf{\textit{H$_u$}}$ comprises the following elements $H_u=\{x\in X_u : \g{x}{\nega{u}}<x\}=\{x\in X : \tau(x)=u, \g{x}{\nega{u}}<x\}$. 
Hence proving 
$
\textit{H$_u$}\subseteq\bigcup_{\kappa\ni s<_\kappa u}\varsigma_{s\to u}(\textit{G$_s$})
$
amounts to showing that
in every densely ordered, odd or even involutive FL$_e$-chain $\mathbf X=(X,\leq,\teALONE,\ite{\te},t,f)$ with residual complement operation $\komp$, 
if $u\geq t$ and $\nega{u}$ are idempotent, and $x\in X$ such that $\tau(x)=u$ and $\g{x}{\nega{u}}<x$ then 
there exist a positive idempotent element $X\ni s<u$ and $y\in X$ such that $\tau(y)=s$ and $\g{y}{u}=x$.
Let $\g{x}{\nega{u}}<y<x$ (such $y$ exists since $X$ is densely ordered).
Now, 
$
\nega{y}<\nega{(\g{x}{\nega{u}})}\leq\g{\nega{y}}{u}
$
follows by claim~\ref{eq_feltukrozes_CS} in Lemma~\ref{tuttiINVOLUTIVE}.
Therefore, $u\not\in Stab_{\nega{y}}$ and $s:=\tau(y)=\tau(\nega{y})<u$ follows.
Since 
$
\g{y}{u}
\overset{(\ref{EzARho})}{=}
\rho_{s\to u}(y)
\overset{(\ref{P5})}{=}
\rho_u(y)
\overset{(\ref{HgHJJhGkhj})}{=}
\min\{z\in X_u: z\geq y\}
$
and since
$X_u\ni\g{x}{\nega{u}}<y<x\in X_u$,
to see that $\g{y}{u}=x$,
it suffices to prove that
$
\g{x}{\nega{u}}
=
x_{\downarrow_u}
$\footnote{Recall that $x_{\downarrow_u}$ is computed in $X_u$.}.
But it holds true since $x\in H_u$ and $\nega{u}\in  H_u^\bullet$, and hence
$
\g{x}{\nega{u}}
=
\g{x}{u_{\downarrow_u}}
=
\g{x}{u^\bullet}
\overset{Table~\ref{Reszeik}/\star_{(2,3)}}{=}
x^\bullet
=
x_{\downarrow_u}
$
follows from claim~\ref{AlaLovokk} in the proof of Theorem~\ref{sPliT}/(\ref{VisszIsMEGyeget}), and we are done.
\end{remark}

\begin{remark}
The easiest way of generalizing Theorem~\ref{mainTheoREm} to conic algebras is to observe that there exist no conic odd or even involutive FL$_e$-algebras which are not linearly ordered.
Indeed, let $a,b\in X$. Proving that $a$ and $b$ are comparable, that is, $a\leq b$ or $b\leq a$ amounts to proving $\g{a}{t}\leq b$ or $\g{b}{t}\leq a$, or equivalently, $\res{\te}{a}{b}\geq t$ or $\res{\te}{b}{a}\geq t$ by adjointness. 
If $\res{\te}{a}{b}\not\geq t$ then $\res{\te}{a}{b}<t$ since $X$ is conic, hence $\res{\te}{a}{b}\leq f$ since $X$ is odd or even.
By claims~\ref{MiNdig} and \ref{eq_quasi_inverse} in Lemma~\ref{tuttiINVOLUTIVE}, $\g{a}{\nega{b}}\geq t$ follows.
Therefore, $\nega{(\g{\nega{a}}{b})}\geq t$ holds by claim~\ref{eq_feltukrozes} in Lemma~\ref{tuttiINVOLUTIVE},
hence $\res{\te}{b}{a}\geq t$ holds by claim~\ref{eq_quasi_inverse} in Lemma~\ref{tuttiINVOLUTIVE}.
\end{remark}

\end{document}